\theoremstyle{plain}
\newtheorem{athm}{Theorem}
\newtheorem{theorem}{Theorem}[section]
\newtheorem{proposition}[theorem]{Proposition}
\newtheorem{corollary}[theorem]{Corollary}
\newtheorem{lemma}[theorem]{Lemma}
\theoremstyle{definition}
\newtheorem{example}{Example}[section]
\newtheorem{definition}[theorem]{Definition}
\newtheorem{remark}[theorem]{Remark}
\DeclareMathOperator{\esssup}{ess \ sup}
\DeclareMathOperator{\diam}{diam}
\DeclareMathOperator{\ho}{H}
\DeclareMathOperator{\Ker}{Ker}
\DeclareMathOperator{\im}{Im}
\DeclareMathOperator{\e}{e}
\DeclareMathOperator{\id}{id}
\DeclareMathOperator{\inte}{int}
\chardef\@x10\chardef\@xv60
\def\tcitime{
\def\@time{%
  \@minute\time\@hour\@minute\divide\@hour\@xv
  \ifnum\@hour<\@x 0\fi\the\@hour:%
  \multiply\@hour\@xv\advance\@minute-\@hour
  \ifnum\@minute<\@x 0\fi\the\@minute
  }}%
\def\x@hyperref#1#2#3{%
   \catcode`\~ = 12
   \catcode`\$ = 12
   \catcode`\_ = 12
   \catcode`\# = 12
   \catcode`\& = 12
   \y@hyperref{#1}{#2}{#3}%
}
\def\y@hyperref#1#2#3#4{%
   #2\ref{#4}#3
   \catcode`\~ = 13
   \catcode`\$ = 3
   \catcode`\_ = 8
   \catcode`\# = 6
   \catcode`\& = 4
}
\def\QCTOpt[#1]#2{%
  \def\QCTOptB{#1}
  \def\QCTOptA{#2}
}
\def\QCTNOpt#1{%
  \def\QCTOptA{#1}
  \let\QCTOptB\empty
}
\def\Qct{%
  \@ifnextchar[{%
    \QCTOpt}{\QCTNOpt}
}
\def\QCBOpt[#1]#2{%
  \def\QCBOptB{#1}%
  \def\QCBOptA{#2}%
}
\def\QCBNOpt#1{%
  \def\QCBOptA{#1}%
  \let\QCBOptB\empty
}
\def\Qcb{%
  \@ifnextchar[{%
    \QCBOpt}{\QCBNOpt}%
}
\def\PrepCapArgs{%
  \ifx\QCBOptA\empty
    \ifx\QCTOptA\empty
      {}%
    \else
      \ifx\QCTOptB\empty
        {\QCTOptA}%
      \else
        [\QCTOptB]{\QCTOptA}%
      \fi
    \fi
  \else
    \ifx\QCBOptA\empty
      {}%
    \else
      \ifx\QCBOptB\empty
        {\QCBOptA}%
      \else
        [\QCBOptB]{\QCBOptA}%
      \fi
    \fi
  \fi
}
\def\GRAPHICSPS#1{%
 \ifcase\GRAPHICSTYPE
   \special{ps: #1}%
 \or
   \special{language "PS", include "#1"}%
 \fi
}%
\def\graffile#1#2#3#4{%
    \bgroup
	   \@inlabelfalse
       \leavevmode
       \@ifundefined{bbl@deactivate}{\def~{\string~}}{\activesoff}%
        \raise -#4 \BOXTHEFRAME{%
           \hbox to #2{\raise #3\hbox to #2{\null #1\hfil}}}%
    \egroup
}%
\def\draftbox#1#2#3#4{%
 \leavevmode\raise -#4 \hbox{%
  \frame{\rlap{\protect\tiny #1}\hbox to #2%
   {\vrule height#3 width\z@ depth\z@\hfil}%
  }%
 }%
}%
\let\nographics=\@msidraft
\newif\ifwasdraft
\def\GRAPHIC#1#2#3#4#5{%
   \ifnum\@msidraft=\@ne\draftbox{#2}{#3}{#4}{#5}%
   \else\graffile{#1}{#3}{#4}{#5}%
   \fi
}
\def\addtoLaTeXparams#1{%
    \edef\LaTeXparams{\LaTeXparams #1}}%
\newif\ifBoxFrame \BoxFramefalse
\newif\ifOverFrame \OverFramefalse
\newif\ifUnderFrame \UnderFramefalse
\def\BOXTHEFRAME#1{%
   \hbox{%
      \ifBoxFrame
         \frame{#1}%
      \else
         {#1}%
      \fi
   }%
}
\def\doFRAMEparams#1{\BoxFramefalse\OverFramefalse\UnderFramefalse\readFRAMEparams#1\end}%
\def\readFRAMEparams#1{%
 \ifx#1\end%
  \let\next=\relax
  \else
  \ifx#1i\dispkind=\z@\fi
  \ifx#1d\dispkind=\@ne\fi
  \ifx#1f\dispkind=\tw@\fi
  \ifx#1t\addtoLaTeXparams{t}\fi
  \ifx#1b\addtoLaTeXparams{b}\fi
  \ifx#1p\addtoLaTeXparams{p}\fi
  \ifx#1h\addtoLaTeXparams{h}\fi
  \ifx#1X\BoxFrametrue\fi
  \ifx#1O\OverFrametrue\fi
  \ifx#1U\UnderFrametrue\fi
  \ifx#1w
    \ifnum\@msidraft=1\wasdrafttrue\else\wasdraftfalse\fi
    \@msidraft=\@ne
  \fi
  \let\next=\readFRAMEparams
  \fi
 \next
 }%
\def\IFRAME#1#2#3#4#5#6{%
      \bgroup
      \let\QCTOptA\empty
      \let\QCTOptB\empty
      \let\QCBOptA\empty
      \let\QCBOptB\empty
      #6%
      \parindent=0pt
      \leftskip=0pt
      \rightskip=0pt
      \setbox0=\hbox{\QCBOptA}%
      \@tempdima=#1\relax
      \ifOverFrame
          \typeout{This is not implemented yet}%
          \show\HELP
      \else
         \ifdim\wd0>\@tempdima
            \advance\@tempdima by \@tempdima
            \ifdim\wd0 >\@tempdima
               \setbox1 =\vbox{%
                  \unskip\hbox to \@tempdima{\hfill\GRAPHIC{#5}{#4}{#1}{#2}{#3}\hfill}%
                  \unskip\hbox to \@tempdima{\parbox[b]{\@tempdima}{\QCBOptA}}%
               }%
               \wd1=\@tempdima
            \else
               \textwidth=\wd0
               \setbox1 =\vbox{%
                 \noindent\hbox to \wd0{\hfill\GRAPHIC{#5}{#4}{#1}{#2}{#3}\hfill}\\%
                 \noindent\hbox{\QCBOptA}%
               }%
               \wd1=\wd0
            \fi
         \else
            \ifdim\wd0>0pt
              \hsize=\@tempdima
              \setbox1=\vbox{%
                \unskip\GRAPHIC{#5}{#4}{#1}{#2}{0pt}%
                \break
                \unskip\hbox to \@tempdima{\hfill \QCBOptA\hfill}%
              }%
              \wd1=\@tempdima
           \else
              \hsize=\@tempdima
              \setbox1=\vbox{%
                \unskip\GRAPHIC{#5}{#4}{#1}{#2}{0pt}%
              }%
              \wd1=\@tempdima
           \fi
         \fi
         \@tempdimb=\ht1
         \advance\@tempdimb by -#2
         \advance\@tempdimb by #3
         \leavevmode
         \raise -\@tempdimb \hbox{\box1}%
      \fi
      \egroup%
}%
\def\DFRAME#1#2#3#4#5{%
  \hfil\break
  \bgroup
     \leftskip\@flushglue
	 \rightskip\@flushglue
	 \parindent\z@
	 \parfillskip\z@skip
     \let\QCTOptA\empty
     \let\QCTOptB\empty
     \let\QCBOptA\empty
     \let\QCBOptB\empty
	 \vbox\bgroup
        \ifOverFrame 
           #5\QCTOptA\par
        \fi
        \GRAPHIC{#4}{#3}{#1}{#2}{\z@}%
        \ifUnderFrame 
           \break#5\QCBOptA
        \fi
	 \egroup
   \egroup
   \break
}%
\def\FFRAME#1#2#3#4#5#6#7{%
  \@ifundefined{floatstyle}
    {
     \begin{figure}[#1]%
    }
    {
	 \ifx#1h
      \begin{figure}[H]%
	 \else
      \begin{figure}[#1]%
	 \fi
	}
  \let\QCTOptA\empty
  \let\QCTOptB\empty
  \let\QCBOptA\empty
  \let\QCBOptB\empty
  \ifOverFrame
    #4
    \ifx\QCTOptA\empty
    \else
      \ifx\QCTOptB\empty
        \caption{\QCTOptA}%
      \else
        \caption[\QCTOptB]{\QCTOptA}%
      \fi
    \fi
    \ifUnderFrame\else
      \label{#5}%
    \fi
  \else
    \UnderFrametrue%
  \fi
  \begin{center}\GRAPHIC{#7}{#6}{#2}{#3}{\z@}\end{center}%
  \ifUnderFrame
    #4
    \ifx\QCBOptA\empty
      \caption{}%
    \else
      \ifx\QCBOptB\empty
        \caption{\QCBOptA}%
      \else
        \caption[\QCBOptB]{\QCBOptA}%
      \fi
    \fi
    \label{#5}%
  \fi
  \end{figure}%
 }%
\def\makeactives{
  \catcode`\"=\active
  \catcode`\;=\active
  \catcode`\:=\active
  \catcode`\'=\active
  \catcode`\~=\active
}
   \gdef\activesoff{%
      \def"{\string"}%
      \def;{\string;}%
      \def:{\string:}%
      \def'{\string'}%
      \def~{\string~}%
    }
\def\FRAME#1#2#3#4#5#6#7#8{%
 \bgroup
 \ifnum\@msidraft=\@ne
   \wasdrafttrue
 \else
   \wasdraftfalse%
 \fi
 \def\LaTeXparams{}%
 \dispkind=\z@
 \def\LaTeXparams{}%
 \doFRAMEparams{#1}%
 \ifnum\dispkind=\z@\IFRAME{#2}{#3}{#4}{#7}{#8}{#5}\else
  \ifnum\dispkind=\@ne\DFRAME{#2}{#3}{#7}{#8}{#5}\else
   \ifnum\dispkind=\tw@
    \edef\@tempa{\noexpand\FFRAME{\LaTeXparams}}%
    \@tempa{#2}{#3}{#5}{#6}{#7}{#8}%
    \fi
   \fi
  \fi
  \ifwasdraft\@msidraft=1\else\@msidraft=0\fi{}%
  \egroup
 }%
\def\TEXUX#1{"texux"}
\def\func#1{\mathop{\rm #1}\nolimits}%
\long\def\QQQ#1#2{%
     \long\expandafter\def\csname#1\endcsname{#2}}%
\long\def\QQA#1#2{}%
\def\QTR#1#2{{\csname#1\endcsname {#2}}}%
\def\EXPAND#1[#2]#3{}%
\def\NOEXPAND#1[#2]#3{}%
\def\LaTeXparent#1{}%
\def\ChildStyles#1{}%
\def\ChildDefaults#1{}%
\def\QTagDef#1#2#3{}%
  \providecommand{\UNICODE}[2][]{\protect\rule{.1in}{.1in}}
  \providecommand{\U}[1]{\protect\rule{.1in}{.1in}}
\def\QQfnmark#1{\footnotemark}
 \def\abstract{%
  \if@twocolumn
   \section*{Abstract (Not appropriate in this style!)}%
   \else \small 
   \begin{center}{\bf Abstract\vspace{-.5em}\vspace{\z@}}\end{center}%
   \quotation 
   \fi
  }%
   \def\registered{\relax\ifmmode{}\r@gistered
                    \else$\m@th\r@gistered$\fi}%
 \def\r@gistered{^{\ooalign
  {\hfil\raise.07ex\hbox{$\scriptstyle\rm\text{R}$}\hfil\crcr
  \mathhexbox20D}}}}{}%
\newdimen\theight
\def\newfmtname{LaTeX2e}
  \DeclareOldFontCommand{\rm}{\normalfont\rmfamily}{\mathrm}
  \DeclareOldFontCommand{\sf}{\normalfont\sffamily}{\mathsf}
  \DeclareOldFontCommand{\tt}{\normalfont\ttfamily}{\mathtt}
  \DeclareOldFontCommand{\bf}{\normalfont\bfseries}{\mathbf}
  \DeclareOldFontCommand{\it}{\normalfont\itshape}{\mathit}
  \DeclareOldFontCommand{\sl}{\normalfont\slshape}{\@nomath\sl}
  \DeclareOldFontCommand{\sc}{\normalfont\scshape}{\@nomath\sc}
\def\alpha{{\Greekmath 010B}}%
\def\beta{{\Greekmath 010C}}%
\def\gamma{{\Greekmath 010D}}%
\def\delta{{\Greekmath 010E}}%
\def\epsilon{{\Greekmath 010F}}%
\def\zeta{{\Greekmath 0110}}%
\def\eta{{\Greekmath 0111}}%
\def\theta{{\Greekmath 0112}}%
\def\iota{{\Greekmath 0113}}%
\def\kappa{{\Greekmath 0114}}%
\def\lambda{{\Greekmath 0115}}%
\def\mu{{\Greekmath 0116}}%
\def\nu{{\Greekmath 0117}}%
\def\xi{{\Greekmath 0118}}%
\def\pi{{\Greekmath 0119}}%
\def\rho{{\Greekmath 011A}}%
\def\sigma{{\Greekmath 011B}}%
\def\tau{{\Greekmath 011C}}%
\def\upsilon{{\Greekmath 011D}}%
\def\phi{{\Greekmath 011E}}%
\def\chi{{\Greekmath 011F}}%
\def\psi{{\Greekmath 0120}}%
\def\omega{{\Greekmath 0121}}%
\def\varepsilon{{\Greekmath 0122}}%
\def\vartheta{{\Greekmath 0123}}%
\def\varpi{{\Greekmath 0124}}%
\def\varrho{{\Greekmath 0125}}%
\def\varsigma{{\Greekmath 0126}}%
\def\varphi{{\Greekmath 0127}}%
\def\nabla{{\Greekmath 0272}}
\def\FindBoldGroup{%
   {\setbox0=\hbox{$\mathbf{x\global\edef\theboldgroup{\the\mathgroup}}$}}%
}
\def\Greekmath#1#2#3#4{%
    \if@compatibility
        \ifnum\mathgroup=\symbold
           \mathchoice{\mbox{\boldmath$\displaystyle\mathchar"#1#2#3#4$}}%
                      {\mbox{\boldmath$\textstyle\mathchar"#1#2#3#4$}}%
                      {\mbox{\boldmath$\scriptstyle\mathchar"#1#2#3#4$}}%
                      {\mbox{\boldmath$\scriptscriptstyle\mathchar"#1#2#3#4$}}%
        \else
           \mathchar"#1#2#3#4%
        \fi 
    \else 
        \FindBoldGroup
        \ifnum\mathgroup=\theboldgroup 
           \mathchoice{\mbox{\boldmath$\displaystyle\mathchar"#1#2#3#4$}}%
                      {\mbox{\boldmath$\textstyle\mathchar"#1#2#3#4$}}%
                      {\mbox{\boldmath$\scriptstyle\mathchar"#1#2#3#4$}}%
                      {\mbox{\boldmath$\scriptscriptstyle\mathchar"#1#2#3#4$}}%
        \else
           \mathchar"#1#2#3#4%
        \fi     	    
	  \fi}
\newif\ifGreekBold  \GreekBoldfalse
\let\SAVEPBF=\pbf
\def\pbf{\GreekBoldtrue\SAVEPBF}%
  \newcounter{equationnumber}  
  \def\mathletters{%
     \addtocounter{equation}{1}
     \edef\@currentlabel{\theequation}%
     \setcounter{equationnumber}{\c@equation}
     \setcounter{equation}{0}%
     \edef\theequation{\@currentlabel\noexpand\alph{equation}}%
  }
    \def\BibTeX{{\rm B\kern-.05em{\sc i\kern-.025em b}\kern-.08em
                 T\kern-.1667em\lower.7ex\hbox{E}\kern-.125emX}}}{}%
\def\AmS{{\protect\usefont{OMS}{cmsy}{m}{n}%
                A\kern-.1667em\lower.5ex\hbox{M}\kern-.125emS}}}{}%
\def\@@eqncr{\let\@tempa\relax
    \ifcase\@eqcnt \def\@tempa{& & &}\or \def\@tempa{& &}%
      \else \def\@tempa{&}\fi
     \@tempa
     \if@eqnsw
        \iftag@
           \@taggnum
        \else
           \@eqnnum\stepcounter{equation}%
        \fi
     \fi
     \global\tag@false
     \global\@eqnswtrue
     \global\@eqcnt\z@\cr}
\def\TCItag{\@ifnextchar*{\@TCItagstar}{\@TCItag}}
\def\@TCItag#1{%
    \global\tag@true
    \global\def\@taggnum{(#1)}}
\def\@TCItagstar*#1{%
    \global\tag@true
    \global\def\@taggnum{#1}}
\def\ExitTCILatex{\makeatother }
\let\DOTSI\relax
\def\RIfM@{\relax\ifmmode}%
\def\FN@{\futurelet\next}%
\def\iint{\DOTSI\intno@\tw@\FN@\ints@}%
\def\iiint{\DOTSI\intno@\thr@@\FN@\ints@}%
\def\iiiint{\DOTSI\intno@4 \FN@\ints@}%
\def\idotsint{\DOTSI\intno@\z@\FN@\ints@}%
\def\ints@{\findlimits@\ints@@}%
\newif\iflimtoken@
\newif\iflimits@
\def\findlimits@{\limtoken@true\ifx\next\limits\limits@true
 \else\ifx\next\nolimits\limits@false\else
 \limtoken@false\ifx\ilimits@\nolimits\limits@false\else
 \ifinner\limits@false\else\limits@true\fi\fi\fi\fi}%
\def\multint@{\int\ifnum\intno@=\z@\intdots@                          
 \else\intkern@\fi                                                    
 \ifnum\intno@>\tw@\int\intkern@\fi                                   
 \ifnum\intno@>\thr@@\int\intkern@\fi                                 
 \int}
\def\multintlimits@{\intop\ifnum\intno@=\z@\intdots@\else\intkern@\fi
 \ifnum\intno@>\tw@\intop\intkern@\fi
 \ifnum\intno@>\thr@@\intop\intkern@\fi\intop}%
\def\intic@{%
    \mathchoice{\hskip.5em}{\hskip.4em}{\hskip.4em}{\hskip.4em}}%
\def\negintic@{\mathchoice
 {\hskip-.5em}{\hskip-.4em}{\hskip-.4em}{\hskip-.4em}}%
\def\ints@@{\iflimtoken@                                              
 \def\ints@@@{\iflimits@\negintic@
   \mathop{\intic@\multintlimits@}\limits                             
  \else\multint@\nolimits\fi                                          
  \eat@}
 \else                                                                
 \def\ints@@@{\iflimits@\negintic@
  \mathop{\intic@\multintlimits@}\limits\else
  \multint@\nolimits\fi}\fi\ints@@@}%
\def\intkern@{\mathchoice{\!\!\!}{\!\!}{\!\!}{\!\!}}%
\def\plaincdots@{\mathinner{\cdotp\cdotp\cdotp}}%
\def\intdots@{\mathchoice{\plaincdots@}%
 {{\cdotp}\mkern1.5mu{\cdotp}\mkern1.5mu{\cdotp}}%
 {{\cdotp}\mkern1mu{\cdotp}\mkern1mu{\cdotp}}%
 {{\cdotp}\mkern1mu{\cdotp}\mkern1mu{\cdotp}}}%
\def\RIfM@{\relax\protect\ifmmode}
\def\text{\RIfM@\expandafter\text@\else\expandafter\mbox\fi}
\let\nfss@text\text
\def\text@#1{\mathchoice
   {\textdef@\displaystyle\f@size{#1}}%
   {\textdef@\textstyle\tf@size{\firstchoice@false #1}}%
   {\textdef@\textstyle\sf@size{\firstchoice@false #1}}%
   {\textdef@\textstyle \ssf@size{\firstchoice@false #1}}%
   \glb@settings}
\def\textdef@#1#2#3{\hbox{{%
                    \everymath{#1}%
                    \let\f@size#2\selectfont
                    #3}}}
\newif\iffirstchoice@
\def\Let@{\relax\iffalse{\fi\let\\=\cr\iffalse}\fi}%
\def\vspace@{\def\vspace##1{\crcr\noalign{\vskip##1\relax}}}%
\def\multilimits@{\bgroup\vspace@\Let@
 \baselineskip\fontdimen10 \scriptfont\tw@
 \advance\baselineskip\fontdimen12 \scriptfont\tw@
 \lineskip\thr@@\fontdimen8 \scriptfont\thr@@
 \lineskiplimit\lineskip
 \vbox\bgroup\ialign\bgroup\hfil$\m@th\scriptstyle{##}$\hfil\crcr}%
\def\Sb{_\multilimits@}%
\def\endSb{\crcr\egroup\egroup\egroup}%
\def\Sp{^\multilimits@}%
\newdimen\ex@
\def\rightarrowfill@#1{$#1\m@th\mathord-\mkern-6mu\cleaders
 \hbox{$#1\mkern-2mu\mathord-\mkern-2mu$}\hfill
 \mkern-6mu\mathord\rightarrow$}%
\def\leftarrowfill@#1{$#1\m@th\mathord\leftarrow\mkern-6mu\cleaders
 \hbox{$#1\mkern-2mu\mathord-\mkern-2mu$}\hfill\mkern-6mu\mathord-$}%
\def\leftrightarrowfill@#1{$#1\m@th\mathord\leftarrow
\mkern-6mu\cleaders
 \hbox{$#1\mkern-2mu\mathord-\mkern-2mu$}\hfill
 \mkern-6mu\mathord\rightarrow$}%
\def\overrightarrow{\mathpalette\overrightarrow@}%
\def\overrightarrow@#1#2{\vbox{\ialign{##\crcr\rightarrowfill@#1\crcr
 \noalign{\kern-\ex@\nointerlineskip}$\m@th\hfil#1#2\hfil$\crcr}}}%
\def\overleftarrow{\mathpalette\overleftarrow@}%
\def\overleftarrow@#1#2{\vbox{\ialign{##\crcr\leftarrowfill@#1\crcr
 \noalign{\kern-\ex@\nointerlineskip}$\m@th\hfil#1#2\hfil$\crcr}}}%
\def\overleftrightarrow{\mathpalette\overleftrightarrow@}%
\def\overleftrightarrow@#1#2{\vbox{\ialign{##\crcr
   \leftrightarrowfill@#1\crcr
 \noalign{\kern-\ex@\nointerlineskip}$\m@th\hfil#1#2\hfil$\crcr}}}%
\def\underrightarrow{\mathpalette\underrightarrow@}%
\def\underrightarrow@#1#2{\vtop{\ialign{##\crcr$\m@th\hfil#1#2\hfil
  $\crcr\noalign{\nointerlineskip}\rightarrowfill@#1\crcr}}}%
\def\underleftarrow{\mathpalette\underleftarrow@}%
\def\underleftarrow@#1#2{\vtop{\ialign{##\crcr$\m@th\hfil#1#2\hfil
  $\crcr\noalign{\nointerlineskip}\leftarrowfill@#1\crcr}}}%
\def\underleftrightarrow{\mathpalette\underleftrightarrow@}%
\def\underleftrightarrow@#1#2{\vtop{\ialign{##\crcr$\m@th
  \hfil#1#2\hfil$\crcr
 \noalign{\nointerlineskip}\leftrightarrowfill@#1\crcr}}}%
\def\qopnamewl@#1{\mathop{\operator@font#1}\nlimits@}
\let\nlimits@\displaylimits
\def\setboxz@h{\setbox\z@\hbox}
\def\varlim@#1#2{\mathop{\vtop{\ialign{##\crcr
 \hfil$#1\m@th\operator@font lim$\hfil\crcr
 \noalign{\nointerlineskip}#2#1\crcr
 \noalign{\nointerlineskip\kern-\ex@}\crcr}}}}
 \def\rightarrowfill@#1{\m@th\setboxz@h{$#1-$}\ht\z@\z@
  $#1\copy\z@\mkern-6mu\cleaders
  \hbox{$#1\mkern-2mu\box\z@\mkern-2mu$}\hfill
  \mkern-6mu\mathord\rightarrow$}
\def\leftarrowfill@#1{\m@th\setboxz@h{$#1-$}\ht\z@\z@
  $#1\mathord\leftarrow\mkern-6mu\cleaders
  \hbox{$#1\mkern-2mu\copy\z@\mkern-2mu$}\hfill
  \mkern-6mu\box\z@$}
\def\projlim{\qopnamewl@{proj\,lim}}
\def\injlim{\qopnamewl@{inj\,lim}}
\def\varinjlim{\mathpalette\varlim@\rightarrowfill@}
\def\varprojlim{\mathpalette\varlim@\leftarrowfill@}
\def\varliminf{\mathpalette\varliminf@{}}
\def\varliminf@#1{\mathop{\underline{\vrule\@depth.2\ex@\@width\z@
   \hbox{$#1\m@th\operator@font lim$}}}}
\def\varlimsup{\mathpalette\varlimsup@{}}
\def\varlimsup@#1{\mathop{\overline
  {\hbox{$#1\m@th\operator@font lim$}}}}
\def\align{\@verbatim \frenchspacing\@vobeyspaces \@alignverbatim
You are using the "align" environment in a style in which it is not defined.}
\let\csname endalign*\endcsname =\endtrivlist
\def\alignat{\@verbatim \frenchspacing\@vobeyspaces \@alignatverbatim
You are using the "alignat" environment in a style in which it is not defined.}
\let\csname endalignat*\endcsname =\endtrivlist
\def\xalignat{\@verbatim \frenchspacing\@vobeyspaces \@xalignatverbatim
You are using the "xalignat" environment in a style in which it is not defined.}
\let\csname endxalignat*\endcsname =\endtrivlist
\def\gather{\@verbatim \frenchspacing\@vobeyspaces \@gatherverbatim
You are using the "gather" environment in a style in which it is not defined.}
\let\csname endgather*\endcsname =\endtrivlist
\def\multiline{\@verbatim \frenchspacing\@vobeyspaces \@multilineverbatim
You are using the "multiline" environment in a style in which it is not defined.}
\let\csname endmultiline*\endcsname =\endtrivlist
\def\arrax{\@verbatim \frenchspacing\@vobeyspaces \@arraxverbatim
You are using a type of "array" construct that is only allowed in AmS-LaTeX.}
\def\tabulax{\@verbatim \frenchspacing\@vobeyspaces \@tabulaxverbatim
You are using a type of "tabular" construct that is only allowed in AmS-LaTeX.}
\let\csname endarrax*\endcsname =\endtrivlist
\let\csname endtabulax*\endcsname =\endtrivlist
 \def\endequation{%
     \ifmmode\ifinner 
      \iftag@
        \addtocounter{equation}{-1} 
        $\hfil
           \displaywidth\linewidth\@taggnum\egroup \endtrivlist
        \global\tag@false
        \global\@ignoretrue   
      \else
        $\hfil
           \displaywidth\linewidth\@eqnnum\egroup \endtrivlist
        \global\tag@false
        \global\@ignoretrue 
      \fi
     \else   
      \iftag@
        \addtocounter{equation}{-1} 
        \eqno \hbox{\@taggnum}
        \global\tag@false%
        $$\global\@ignoretrue
      \else
        \eqno \hbox{\@eqnnum}
        $$\global\@ignoretrue
      \fi
     \fi\fi
 } 
 \newif\iftag@ \tag@false
 \def\TCItag{\@ifnextchar*{\@TCItagstar}{\@TCItag}}
 \def\@TCItag#1{%
     \global\tag@true
     \global\def\@taggnum{(#1)}}
 \def\@TCItagstar*#1{%
     \global\tag@true
     \global\def\@taggnum{#1}}
     \def\tag{\@ifnextchar*{\@tagstar}{\@tag}}
     \def\@tag#1{%
         \global\tag@true
         \global\def\@taggnum{(#1)}}
     \def\@tagstar*#1{%
         \global\tag@true
         \global\def\@taggnum{#1}}
\def\tfrac#1#2{{\textstyle {#1 \over #2}}}%
\def\dfrac#1#2{{\displaystyle {#1 \over #2}}}%
\begin{document}

\begin{abstract}
In this article, we develop a functional-analytic framework to establish existence, uniqueness, regularity of disintegration, and statistical properties of equilibrium states for a broad class of dynamical systems, potentially discontinuous and not necessarily locally invertible. Our approach is applied to a family of piecewise partially hyperbolic maps and associated classes of potentials. We further prove several statistical limit theorems, including exponential decay of correlations, and propose related questions and conjectures. A collection of examples illustrating the applicability of our results is provided, including partially hyperbolic attractors over horseshoes, discontinuous systems, non-invertible dynamical systems admitting a semi-conjugacy to intermittent maps such as the Manneville--Pomeau map, and fat solenoidal attractors.
\end{abstract}

\title[Thermodynamic formalism for discontinuous maps]{Thermodynamic formalism for discontinuous maps and statistical properties of their equilibrium states}

\author[Rafael A. Bilbao]{Rafael A. Bilbao}
\author[Rafael Lucena]{Rafael Lucena}

\date{\today }
\keywords{Statistical Stability, Transfer
Operator, Equilibrium States, Skew Product.}

\address[Rafael A. Bilbao]{Escuela de Matem\'atica y Estad\'istica, Universidad Pedag\'ogica y Tecnol\'ogica de Colombia, Avenida Central del Norte 39-115, Sede Central Tunja, Boyac\'a, 150003, Colombia.}
\email{rafael.alvarez@uptc.edu.co}


\address[Rafael Lucena]{Universidade Federal de Alagoas, Instituto de Matemática - UFAL, Av. Lourival Melo Mota, S/N
	Tabuleiro dos Martins, Maceio - AL, 57072-900, Brasil}
\email{rafael.lucena@im.ufal.br}
\urladdr{www.im.ufal.br/professor/rafaellucena}
\maketitle


\section{Introduction}

Thermodynamic formalism was introduced into dynamical systems in the mid--1970s through the seminal works of Sinai, Ruelle, and Bowen \cite{Si,Ru,Bo}, establishing a deep connection between statistical mechanics and the statistical properties of chaotic dynamics. A central tool in this theory is the Ruelle--Perron--Frobenius (RPF) operator, whose spectral properties encode fundamental quantities such as entropy, pressure, equilibrium states, and decay of correlations. For a potential $\varphi$ and a continuous observable $\psi$, the RPF operator associated with a map $f$ is defined by
\[
\mathcal{L}_\varphi \psi (x) = \sum_{y \in f^{-1}(x)} \psi(y)\, e^{\varphi(y)}.
\]

Spectral methods have since become a cornerstone of thermodynamic formalism and have been successfully extended far beyond uniformly hyperbolic systems. In particular, functional--analytic approaches developed by Keller and Baladi \cite{b1, b2, b3,bt, k1, k2} enabled the study of piecewise expanding and smooth systems via bounded variation and anisotropic Banach spaces. Further major advances were achieved by Liverani, Gou\"ezel, and Tsujii \cite{g3, g1, g2,gl,  l1, l2, l3, l4, t1, t2}, who developed refined anisotropic techniques to obtain spectral gap results and sharp statistical properties for hyperbolic diffeomorphisms and flows. Related spectral approaches have also proved effective in systems with singularities and open dynamics, notably in the work of Demers, Liverani, and Zhang \cite{d5,d1,d2,d3,d4}.

In parallel, thermodynamic formalism for partially hyperbolic systems has been developed through alternative methods, particularly for systems that are semi--conjugate to non--uniformly expanding maps. This includes skew--product dynamics with contracting fibres, as studied for instance in \cite{VR1, TC, DHRS,LOR,RA1,RA2}. In these works, the base dynamics typically exhibits strong expansion properties, while the fibre dynamics is uniformly contracting, allowing the construction and analysis of invariant and equilibrium measures under various assumptions.

The present paper contributes to this line of research by developing a spectral approach to thermodynamic formalism for a class of partially hyperbolic skew products that may exhibit discontinuities, lack of invertibility, and absence of a preserved order. Our motivation is to extend the applicability of operator--theoretic methods to settings where classical assumptions do not hold, while retaining access to strong statistical properties such as existence and uniqueness of equilibrium states and exponential decay of correlations.

More precisely, we consider maps $F : M \times K \to M \times K$ of the form $F=(f,G)$, where $f: M \to M$ is a non--uniformly expanding map and $G : M \times K \to K$ is a (possibly discontinuous) fibre map exhibiting contraction along fibres. Inspired by the RPF framework and by the approaches developed in \cite{RRR, RRR2, GLu, LiLu, r1, r2}, we introduce a class of linear operators acting on suitable spaces of signed measures constructed via disintegration along the fibre partition. We show that these operators exhibit a spectral gap under general assumptions, which yields existence, uniqueness, and sharp statistical properties of equilibrium states for a class of H\"older continuous potentials.

The paper is organized as follows. In the first part, developed in Section~\ref{kjdfkjdsfkj}, we establish a functional--analytic framework adapted to the study of thermodynamic formalism in discontinuous and partially hyperbolic settings. In the second part, presented in Section~\ref{appppl}, we apply this framework to several classes of skew--product systems, including examples not previously covered by existing spectral approaches, and derive detailed statistical properties of their equilibrium states.

\noindent\textbf{Role of Theorems A--G and H--K: an informal description.} Theorems~A--G do not concern the existence of equilibrium states themselves.
Instead, they establish an abstract functional--analytic framework for
transfer operators associated with a general class of dynamical systems.
These results are conceived as tools to be applied to piecewise partially
hyperbolic skew products with contracting fibres.

More precisely, Theorems~A--G introduce suitable spaces of signed measures,
prove Lasota--Yorke type inequalities, and establish spectral properties
that constitute the backbone of the thermodynamic formalism developed in this work.
The existence, uniqueness, and statistical properties of equilibrium
states are derived later as applications of this framework;
see Theorems~H--K.

\noindent\textbf{Statements of the Main Results.} Here we present the main results of this work. For clarity, they are organised into two parts, which are presented in Subsections~\ref{ueuoiort} and~\ref{ueuoiorty}.

In what follows $F: M \times K \longrightarrow M \times K$ is a map of the type $F=(f,G)$, where $f: M \longrightarrow M$ and $G: M \times K \longrightarrow K$ are measurable maps. We suppose that $m$ and $m_2$ are fixed probabilities, where the hypothesis on the measurable space $(M, m)$ are stated in section \ref{hfgjfdh} and $m_2$ is any fixed probability on the Borel sets of the compact metric space $K$. Sometimes we use the notation $\Sigma:=M \times K$.

\subsection{General results} \label{ueuoiort}

This subsection presents the most general results of the paper, which are proved in Section~\ref{kjdfkjdsfkj}. They concern skew-product maps $F=(f,G)$ under suitable spectral assumptions: the normalized Ruelle--Perron--Frobenius operator of the base map $f$, together with the transfer operators of the fibre maps $G(x,\cdot)$, satisfy the spectral properties required in our framework. These results are formulated as Theorems~A--G.

It is worth noting that, from the perspective of continuity, this initial framework can accommodate a wide range of discontinuous systems, since the hypotheses focus primarily on the spectral properties of the operators associated with $f$ and $G$. More precisely, the assumptions are divided according to the base and fibre dynamics. 

For the base map $f$, conditions (P1) and (P2) encode the existence of equilibrium states for a suitable class of potentials, together with appropriate spectral properties of the associated Ruelle--Perron--Frobenius operator. In addition, they impose a mild structural assumption on the inverse branches of $f$. These hypotheses are standard in non-uniformly expanding settings, although the spectral decomposition required in (P1) is of a functional-analytic nature.

For the fibre map $G$, conditions (G1)--(G3) control the action of the induced fibre dynamics on spaces of signed measures, expressing uniform contraction and regularity properties. While (G1) reflects, in particular, a natural dynamical requirement for contracting fibres, conditions (G2) and (G3) are mainly technical assumptions related to the choice of norms and function spaces.

Throughout this subsection, $\mu_0$ denotes a distinguished $F$-invariant probability measure whose existence is postulated axiomatically in condition (P3). Moreover, $\mu_0$ projects onto the equilibrium state $m$ of $f$, a hypothesis that is commonly satisfied in skew-product constructions. At this level of generality, $\mu_0$ is not assumed a priori to possess additional properties, such as being an equilibrium state for a given potential. Its precise dynamical and statistical nature is progressively clarified throughout this section and further developed in Section~\ref{kjdfkjdsfkj}, where we establish uniqueness, spectral characterizations, and statistical properties.

The following two results concern the action of the operators 
$\func{\overline{F}}_\Phi$ and $\func{\overline{F}}_{\Phi,h}$ 
associated with potentials $\Phi$ belonging to the class considered in this framework 
(see Equation (\ref{PPP}) and axiom (P1)). Although this convergence property is weaker than the existence of a spectral gap, 
it will be a key ingredient in the derivation of such stronger spectral properties in subsequent results.
They establish exponential rates of convergence of the iterates 
$(\func{\overline{F}}_\Phi^{n}\mu)_{n\geq 1}$ and 
$(\func{\overline{F}}_{\Phi,h}^{n}\mu)_{n\geq 1}$ 
towards the equilibrium state $\mu_0$, 
for all probability measures $\mu$ belonging to $\mathbf{S}^\infty$ and $\mathbf{S}^1$, respectively, 
with respect to the weak norms associated with the spaces 
$\mathbf{L}^{1}$ and $\mathbf{L}^{\infty}$.

The normed spaces $\mathbf{L}^{1}$, $\mathbf{L}^{\infty}$, 
$\mathbf{S}^{1}$, and $\mathbf{S}^{\infty}$ are defined in 
Definitions~\ref{l1}, \ref{linffff}, \ref{s1111}, and~\ref{sinf}, respectively. 
The sets $\mathbf{V}^{1}$ and $\mathbf{V}^{\infty}$ denote the linear subspaces of 
zero-average measures of $\mathbf{S}^{1}$ and $\mathbf{S}^{\infty}$, respectively; 
see Equations~\eqref{mathV} and~\eqref{mathVV} for the precise definitions.

\begin{athm}[Exponential convergence to the equilibrium in $\mathbf{L}^{\infty}$ and $\mathbf{S}^{\infty}$]\label{5.8}
Suppose that $F$ satisfies (P1), (P2), (P3), (G1), Equation~(\ref{Olga1}) of (G2), and (G3).
Then there exist constants $D \in \mathbb{R}$ and $0 \leq \beta_3 < 1$ such that, 
for every signed measure $\mu \in \mathbf{V}^\infty$, it holds
\begin{equation*}
	\|\func{\overline{F}}_{\Phi,h}^{\,n}\mu\|_{\infty}
	\leq D\,\beta_3^{\,n}\,\|\mu\|_{\mathbf{S}^{\infty}},
\end{equation*}
for all $n \geq 1$.
\end{athm}

\begin{athm}[Exponential convergence to the equilibrium in $\mathbf{L}^1$ and $\mathbf{S}^{1}$]\label{5.9}
Suppose that $F$ satisfies (P1), (P2), (P3), (G1), Equation~(\ref{Olga2}) of (G2), and (G3).
Then there exist constants $D_4 \in \mathbb{R}$ and $0 \leq \beta_4 < 1$ such that, 
for every signed measure $\mu \in \mathbf{V}^1$, it holds
\begin{equation*}
	\|\func{\overline{F}}_\Phi^{\,n}\mu\|_{1}
	\leq D_4\,\beta_4^{\,n}\,\|\mu\|_{\mathbf{S}^{1}},
\end{equation*}
for all $n \geq 1$.
\end{athm}

The next result gives the necessary hypothesis to guaranties uniqueness (since the existence is given by (P3)) of an $F$-invariant measure in $\mathbf{S}^{\infty}$ and in $\mathbf{S}^{1}$ (see Equations (\ref{s1}) and (\ref{sinfi})). Its proof is given in section \ref{invt}.

\begin{athm}\label{belongss}
Suppose that $F$ satisfies (P1), (P2), (P3), (G1), Equation~(\ref{Olga1}) of (G2), and (G3). Then $F$ admits a unique $F$-invariant probability measure in $\mathbf{S}^{\infty}$. If instead $F$ satisfies (P1), (P2), (P3), (G1), Equation~(\ref{Olga2}) of (G2), and (G3),
then $F$ admits a unique $F$-invariant probability measure in $\mathbf{S}^{1}$. In both cases, the invariant measure $\mu_0$ satisfies $\pi_{1*}\mu_0 = m$,
where $\pi_1 : \Sigma \to M$ denotes the canonical projection onto $M$ and
$\pi_{1*}$ its associated pushforward map.
\end{athm}

The following Theorems~\ref{spgap} and~\ref{spgapp}, proved in Section~\ref{jshdjfgsjhdf}, show that the operator $\func{\overline{F}}_\Phi$ acting on the spaces $\mathbf{S}^{1}$ and $\mathbf{S}^{\infty}$ has a spectral gap. Results of this type have far-reaching dynamical and statistical consequences, as they imply several limit and mixing properties. In particular, as established in Theorems~\ref{shkjfjdhsf} and~\ref{slkdgjsdg}, we obtain exponential decay of correlations, namely
\[
\lim_{n \to \infty} C_n(u_1,u_2)=0,
\]
where
\[
C_n(u_1,u_2):=\left| \int (u_2 \circ F^n)\, u_1 \, d\mu_0 - \int u_2 \, d\mu_0 \int u_1 \, d\mu_0 \right|,
\]
for $u_1 \in B'_1$ and $u_2 \in \Theta_{\mu_0}^1$ in Theorem~\ref{shkjfjdhsf}, and for $u_1 \in B'_\infty$ and $u_2 \in \Theta_{\mu_0}^\infty$ in Theorem~\ref{slkdgjsdg}.

\begin{athm}[Spectral gap on $\mathbf{S}^{1}$]
\label{spgap}
Suppose that $F$ satisfies (P1), (P2), (P3), (G1), Equation~(\ref{Olga2}) of (G2), and (G3).
Then the operator $\func{\overline{F}}_\Phi:\mathbf{S}^{1}\longrightarrow \mathbf{S}^{1}$
admits a decomposition
\[
\func{\overline{F}}_\Phi=\func{P}+\func{N},
\]
where:
\begin{enumerate}
	\item[(a)] $\func{P}$ is a projection of rank one, that is, $\func{P}^2=\func{P}$ and $\dim \im(\func{P})=1$;
	
	\item[(b)] there exist constants $0<\xi<1$ and $R>0$ such that, for all $\mu\in\mathbf{S}^{1}$,
	\[
	\|\func{N}^n(\mu)\|_{\mathbf{S}^{1}}\le R\,\xi^n\|\mu\|_{\mathbf{S}^{1}}, \quad \forall n\ge1;
	\]
	
	\item[(c)] $\func{P}\func{N}=\func{N}\func{P}=0$.
\end{enumerate}
\end{athm}

\begin{athm}[Spectral gap on $\mathbf{S}^{\infty}$]
\label{spgapp}
Suppose that $F$ satisfies (P1), (P2), (P3), (G1), Equation~(\ref{Olga1}) of (G2), and (G3).
Then the operator $\func{\overline{F}}_{\Phi,h}:\mathbf{S}^{\infty}\longrightarrow \mathbf{S}^{\infty}$
admits a decomposition
\[
\func{\overline{F}}_{\Phi,h}=\func{P}+\func{N},
\]
where:
\begin{enumerate}
	\item[(a)] $\func{P}$ is a projection of rank one, that is, $\func{P}^2=\func{P}$ and $\dim \im(\func{P})=1$;
	
	\item[(b)] there exist constants $0<\xi _2<1$ and $R_2>0$ such that, for all $\mu\in\mathbf{S}^{\infty}$,
	\[
	\|\func{N}^n(\mu)\|_{\mathbf{S}^{\infty}}\le R_2\,\xi_2 ^n\|\mu\|_{\mathbf{S}^{\infty}}, \quad \forall n\ge1;
	\]
	
	\item[(c)] $\func{P}\func{N}=\func{N}\func{P}=0$.
\end{enumerate}
\end{athm}

Before stating Theorems~\ref{shkjfjdhsf} and~\ref{slkdgjsdg}, we introduce the classes of observables that will be used in their formulation and proofs. Let $\mathbf{S}^1$ and $\mathbf{S}^\infty$ denote the corresponding spaces of signed measures introduced above. We define
\begin{equation}\label{sjdfhjdsf}
	B'_1 := \left\{ u : \Sigma \to \mathbb{R} \;\middle|\;
	\exists\, R_3 = R_3(u);
	\left| \int u \, d\mu \right| \le R_3 \, \|\mu\|_{\mathbf{S}^1},
	\ \forall \mu \in \mathbf{S}^1 \right\},
\end{equation}
and
\begin{equation}\label{poyuoypuiyu}
	B'_\infty := \left\{ u : \Sigma \to \mathbb{R} \;\middle|\;
	\exists\, R_4 = R_4(u);
	\left| \int u \, d\mu \right| \le R_4 \, \|\mu\|_{\mathbf{S}^\infty},
	\ \forall \mu \in \mathbf{S}^\infty \right\}.
\end{equation}
Moreover, given the reference invariant measure $\mu_0$, we set
\begin{equation*}
	\Theta_{\mu_0}^1 := \{ v : \Sigma \to \mathbb{R} : v \mu_0 \in \mathbf{S}^1 \}
	\quad \text{and} \quad
	\Theta_{\mu_0}^\infty := \{ v : \Sigma \to \mathbb{R} : v \mu_0 \in \mathbf{S}^\infty \}.
\end{equation*}
Here, for a measurable function $v$ and a measure $\mu$, the signed measure
$v\mu$ is defined by $(v\mu)(A) := \int_A v \, d\mu$ for every measurable
set $A \subset \Sigma$.

\begin{athm}\label{shkjfjdhsf}
Suppose that $F$ satisfies (P1), (P2), (P3), (G1), Equation~(\ref{Olga1}) of (G2), and (G3). 
Then, for all observables $u \in B'_1$ and $v \in \Theta_{\mu_0}^1$, it holds that
\[
\left| \int (u \circ F^n) v  d\mu_0 
      - \int u  d\mu_0 \int v  d\mu_0 \right|
\leq \| v \mu_0 \|_{\mathbf{S}^{1}} \, R R_3  \xi^{n},
\qquad \forall n \geq 1,
\]
where $\xi$ and $R$ are as in Theorem~\ref{spgap}, and $R_3=R_3(u)$ is as in
Equation~(\ref{sjdfhjdsf}).
\end{athm}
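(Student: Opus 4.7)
\textbf{Proof plan for Theorem \ref{shkjfjdhsf}.}

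The plan is to derive the correlation bound directly from the spectral gap decomposition $\overline{F}_\Phi=P+N$ provided by Theorem \ref{spgap}, by identifying the correlation integral with the integral of $g$ against an iterate of the operator. The underlying duality identity I would rely on is
\[
\int (g\circ F^{n})\,u\,d\mu_0 \;=\; \int g\,d\bigl(\overline{F}_\Phi^{\,n}(u\mu_0)\bigr),
\]
which holds because $\overline{F}_\Phi$ is, by construction, the signed-measure transfer operator associated with $(F,\Phi)$ and $\mu_0$ is its fixed probability, as ensured by (P3). As a preliminary step I would verify that $u\in\Theta_{\mu_0}^1$ implies $u\mu_0\in S^1$ (so that $\overline{F}_\Phi$ can be applied to it iteratively), using the definitions of $\Theta_{\mu_0}^1$ and $S^1$ from Section \ref{kjdfkjdsfkj}.

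Next, I would exploit that $P$ is a rank-one projection on $S^1$ whose image contains the fixed probability $\mu_0$, and that $\overline{F}_\Phi$ preserves total mass. These two facts force $P(\nu)=\nu(\Sigma)\,\mu_0$ for every $\nu\in S^1$; applied to $u\mu_0$ they give $P(u\mu_0)=\bigl(\int u\,d\mu_0\bigr)\mu_0$. Using $PN=NP=0$ and $P^{2}=P$ one obtains by induction $\overline{F}_\Phi^{\,n}=P+N^{n}$, so that
\[
\overline{F}_\Phi^{\,n}(u\mu_0)\;=\;\Bigl(\int u\,d\mu_0\Bigr)\mu_0 \;+\; N^{n}(u\mu_0).
\]
Substituting into the duality identity and rearranging yields the key algebraic equation
\[
\int (g\circ F^{n})\,u\,d\mu_0 \;-\; \Bigl(\int u\,d\mu_0\Bigr)\!\int g\,d\mu_0 \;=\; \int g\,d\bigl(N^{n}(u\mu_0)\bigr).
\]

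To finish, I would invoke Equation \eqref{sjdfhjdsf}, which I expect to bound the pairing of a function $g\in B'_1$ against a signed measure $\eta\in S^{1}$ by $R_3\|\eta\|_{S^{1}}$, and then apply item (b) of Theorem \ref{spgap} to get $\|N^{n}(u\mu_0)\|_{S^{1}}\leq R\,\xi^{n}\,\|u\mu_0\|_{S^{1}}$. Combining these two estimates gives the claimed decay rate $R R_3\,\xi^{n}\|u\mu_0\|_{S^{1}}$. The main obstacle, as I see it, is not the algebra above but the careful bookkeeping around the duality identity and the identification $P(\nu)=\nu(\Sigma)\mu_0$ within the signed-measure framework: one must check that $\overline{F}_\Phi$ really realizes the composition-by-$F$ duality against $\mu_0$ (which hinges on $\mu_0$ being the normalizing fixed point coming from (P3)) and that Equation \eqref{sjdfhjdsf} applies to the specific measures $N^{n}(u\mu_0)$ in the $S^{1}$ norm. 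Once these identifications are justified, the rest of the proof is essentially formal.
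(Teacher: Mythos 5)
Your proposal is correct and follows essentially the same route as the paper: you invoke the duality identity $\int (g\circ F^n)u\,d\mu_0 = \int g\,d(\overline{F}_\Phi^{\,n}(u\mu_0))$ (which is Proposition \ref{olgaa1}), identify $P(u\mu_0) = (\int u\,d\mu_0)\,\mu_0$ via the rank-one projection $\func{P}(\mu)=\mu(\Sigma)\mu_0$ from the proof of Theorem \ref{spgap}, and then bound $\|N^n(u\mu_0)\|_{S^1}$ using Theorem \ref{spgap}(b) and the defining inequality for $B'_1$. The paper does exactly this, writing $\overline{F}_\Phi^n(u\mu_0) - \func{P}(u\mu_0) = \func{N}^n(u\mu_0)$ in one step rather than spelling out $\overline{F}_\Phi^n = P + N^n$.
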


\begin{athm}\label{slkdgjsdg}
Suppose that $F$ satisfies (P1), (P2), (P3), (G1), Equation~(\ref{Olga2}) of (G2), and (G3). 
Then, for all observables $u \in B'_\infty$ and $v \in \Theta_{\mu_0}^\infty$, it holds that
\[
\left| \int (u \circ F^n) v d\mu_0 
      - \int u d\mu_0 \int v d\mu_0 \right|
\leq \|v \mu_0 \|_{\mathbf{S}^{\infty}} R R_4 \xi^{n},
\qquad \forall n \geq 1,
\]
where $\xi$ and $R$ are as in Theorem~\ref{spgapp}, and $R_4=R_4(u)$ is as in
Equation~(\ref{poyuoypuiyu}).
\end{athm}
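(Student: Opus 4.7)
The plan is to deduce Theorem \ref{slkdgjsdg} from the spectral gap decomposition of Theorem \ref{spgapp} in exactly the same way that Theorem \ref{shkjfjdhsf} is obtained from Theorem \ref{spgap}, simply replacing the roles of $S^{1}$, $B'_{1}$, $\Theta_{\mu_0}^{1}$, $R_{3}$ by $S^{\infty}$, $B'_{\infty}$, $\Theta_{\mu_0}^{\infty}$, $R_{4}$ and using the $\|\cdot\|_{\infty}$ estimates in place of the $\|\cdot\|_{1}$ ones.

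\emph{Step 1: rewrite the correlation as an integral against a transferred measure.} Since $u\in\Theta_{\mu_0}^{\infty}$, the signed measure $u\mu_{0}$ lies in $S^{\infty}$. The defining duality relation of $\func{\overline{F}}_{\Phi,h}$ with respect to composition by $F$ (which, by construction in Section \ref{kjdfkjdsfkj}, is the reason $\mu_{0}$ appears as its fixed point) gives
\[
\int (g\circ F^{n})\,u\,d\mu_{0} \;=\; \int g\,d\bigl(\func{\overline{F}}_{\Phi,h}^{\,n}(u\mu_{0})\bigr)
\]
for every bounded measurable $g$, in particular for $g\in B'_{\infty}$. I would first make sure this identity is indeed the content of the definition of $\func{\overline{F}}_{\Phi,h}$ used in Section \ref{kjdfkjdsfkj} (or else obtain it by a density argument).

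\emph{Step 2: apply the spectral decomposition.} By Theorem \ref{spgapp}, $\func{\overline{F}}_{\Phi,h}=\func{P}+\func{N}$ with $\func{P}^{2}=\func{P}$, $\dim\im(\func{P})=1$, and $\func{P}\func{N}=\func{N}\func{P}=0$. Iterating gives
\[
\func{\overline{F}}_{\Phi,h}^{\,n}\;=\;\func{P}+\func{N}^{n}.
\]
The unique $F$-invariant probability in $S^{\infty}$ is $\mu_{0}$ (Theorem \ref{belongss}), so the one-dimensional range of $\func{P}$ is spanned by $\mu_{0}$. Taking any probability $\mu\in S^{\infty}$ and comparing total masses forces $\func{P}(u\mu_{0})=\left(\int u\,d\mu_{0}\right)\mu_{0}$. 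Substituting back yields
\[
\int g\,d\bigl(\func{\overline{F}}_{\Phi,h}^{\,n}(u\mu_{0})\bigr)\;=\;\int g\,d\mu_{0}\,\int u\,d\mu_{0}\;+\;\int g\,d\bigl(\func{N}^{n}(u\mu_{0})\bigr).
\]

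\emph{Step 3: bound the remainder using the spectral gap and Equation (\ref{poyuoypuiyu}).} By Theorem \ref{spgapp}(b) one has $\|\func{N}^{n}(u\mu_{0})\|_{S^{\infty}}\leq R\,\xi^{n}\,\|u\mu_{0}\|_{S^{\infty}}$, while Equation (\ref{poyuoypuiyu}) supplies a constant $R_{4}$ with $\left|\int g\,d\nu\right|\leq R_{4}\,\|\nu\|_{S^{\infty}}$ for every $g\in B'_{\infty}$ and $\nu\in S^{\infty}$. Combining these estimates,
\[
\left|\int g\,d\bigl(\func{N}^{n}(u\mu_{0})\bigr)\right|\;\leq\;R_{4}\,R\,\xi^{n}\,\|u\mu_{0}\|_{S^{\infty}},
\]
which, together with Step 2, is the claimed inequality.

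\emph{Expected obstacle.} The only genuinely delicate point is Step 1: verifying that $u\mu_{0}\in S^{\infty}$ for $u\in\Theta_{\mu_0}^{\infty}$ and that the transfer/composition duality holds for $\func{\overline{F}}_{\Phi,h}$ against the full class $B'_{\infty}$, given that $F$ may be discontinuous and not locally invertible. Once those definitional facts from Section \ref{kjdfkjdsfkj} are in hand, Steps 2 and 3 are a direct consequence of the spectral gap and cause no further difficulty.
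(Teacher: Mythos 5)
Your proposal follows exactly the argument the paper uses for Theorem~\ref{shkjfjdhsf} and then transposes to the $S^{\infty}$ setting: the duality identity is Proposition~\ref{olgaa11111}, the identification $\func{P}(u\mu_{0})=\left(\int u\,d\mu_{0}\right)\mu_{0}$ follows from $\func{P}(\mu)=\mu(\Sigma)\mu_{0}$ as in the proof of the spectral gap theorem, and the remainder is bounded via Theorem~\ref{spgapp}(b) together with Equation~(\ref{poyuoypuiyu}). Your ``expected obstacle'' at Step~1 is in fact vacuous: $u\mu_{0}\in S^{\infty}$ holds by the very definition of $\Theta_{\mu_{0}}^{\infty}$, so the argument goes through with no further work.
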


\subsection{Applications}\label{ueuoiorty}

The following theorems are applications of the results established above and will be proved in Section~\ref{appppl}. They concern piecewise partially hyperbolic skew-products of the form $F=(f,G)$, where $f$ is a mostly expanding local homeomorphism satisfying conditions (f1), (f2), and (f3), and $G$ is a (possibly discontinuous) map satisfying conditions (H1) and (H2), all stated precisely in Section~\ref{appppl}. 

Assumptions (f1)--(f3) ensure that $f$ is a non-uniformly expanding transformation whose associated Ruelle--Perron--Frobenius operator admits a spectral gap for suitable H\"older potentials. On the other hand, assumptions (H1) and (H2) concern the fibre map $G$ and guarantee uniform contraction along almost every fibre, together with a mild regularity condition allowing for discontinuities.

In this setting, we show that $F$ admits an invariant measure $\mu_0$ satisfying condition~(P3). Moreover, in Theorem~\ref{belongsss} we prove that, if $F$ uniformly contracts all vertical fibres, then $\mu_0$ is an equilibrium state for $F$ associated with a lifted potential $\Phi \in \mathscr{P}_\Sigma$, where
\begin{equation}\label{ps1}
\mathscr{P}_\Sigma
=
\left\{
\Phi:\Sigma \to \mathbb{R}
\;\middle|\;
\exists\, \varphi \in \mathscr{P}_M \text{ such that } \Phi=\varphi \circ \pi_1
\right\},
\end{equation}
$\pi_1:\Sigma \to M$ denotes the projection onto the first coordinate, and $\mathscr{P}_M$ is a class of potentials $\varphi:M \to \mathbb{R}$ (see Definitions~\ref{PH} and~\ref{P}).

\begin{athm}\label{belongsss}
Assume that $F$ satisfies conditions (f1), (f2), (f3), and (H1). 
Then, for each potential $\Phi \in \mathscr{P}_\Sigma$, there exists a unique $F$-invariant measure $\mu_0 \in \mathbf{S}^1$, which in addition belongs to $\mathbf{S}^\infty$. In particular, if $F$ uniformly contracts all fibres, then $\mu_0$ is an equilibrium state.
\end{athm}

We now state a theorem that establishes a quantitative regularity property of the unique equilibrium state $\mu_0$ associated with $(F,\Phi)$. More precisely, it provides an estimate for the 
Hölder constant (see Equation~\eqref{Lips2} in Definition~\ref{Lips3}) of the disintegration of $\mu_0$.

Results of this type have several applications. Similar regularity estimates were obtained for other classes 
of dynamical systems in \cite{RRR2,GLu}, where they are used to prove stability of the $F$-invariant measure 
under suitable perturbations. In the present setting, we apply this theorem to show that the abstract sets 
$\Theta^1_{\mu_0}$, $\Theta^\infty_{\mu_0}$, $B'_1$, and $B'_\infty$, introduced in 
Theorems~\ref{shkjfjdhsf} and~\ref{slkdgjsdg}, contain the space of $\zeta$-Hölder functions 
$\ho_\zeta(\Sigma)$.

\begin{athm}\label{regg}
Suppose that $F:\Sigma \to \Sigma$ satisfies conditions (f1), (f2), (f3), (H1), (H2), and $(\alpha \cdot L)^\zeta < 1$.
For each potential $\Phi \in \mathscr{P}_\Sigma$, let $\mu_0$ denote the $F$-invariant measure of Theorem \ref{belongsss}.
Then $\mu_0 \in \mathcal{H}_\zeta^{+}$ and
\begin{equation}\label{VA}
|\mu_0|_\zeta \leq \frac{D}{1-\beta},
\end{equation}
where $D$ and $\beta$ are the constants given in Proposition~\ref{iuaswdas}.
\end{athm}

As a consequence of the estimate established in the previous theorem, the next result complements Theorems~\ref{shkjfjdhsf} and~\ref{slkdgjsdg}. In particular, it implies that the limit $$\lim_{n \to \infty} C_n(u,v) = 0$$ converges exponentially fast for all Hölder observables $u, v \in \ho_\zeta(\Sigma)$. The proof is given in Subsection~\ref{jhdfjhdf}.

\begin{athm}\label{disisisii}
Suppose that $F:\Sigma \longrightarrow \Sigma$ satisfies conditions
(f1), (f2), (f3), (H1), (H2), and $(\alpha \cdot L)^\zeta<1$.
Let $\mu_0$ denote the unique $F$-invariant measure associated
with a potential $\Phi \in \mathscr{P}_\Sigma$, which belongs to both
$\mathbf{S}^1$ and $\mathbf{S}^\infty$.
Then
\[
\ho_\zeta(\Sigma) \subset \Theta^1_{\mu_0}, \qquad
\ho_\zeta(\Sigma) \subset \Theta^\infty_{\mu_0}, \qquad
\ho_\zeta(\Sigma) \subset B'_1, \qquad
\ho_\zeta(\Sigma) \subset B'_\infty,
\] where $\ho_\zeta(\Sigma)$ denotes the space of real $\zeta$-H\"olders functions defined on $\Sigma$. 
\end{athm}

A natural question is whether the results obtained so far extend to a broader class of potentials than $\mathscr{P}_\Sigma$ (see Equation (\ref{ps1})), in particular to potentials that are not necessarily constant along the fibres. To address this question, and to isolate it as a co-homological problem, we introduce the class $\mathcal{S}$ of dynamical systems $F$ satisfying the following properties:
\begin{enumerate}
	\item $F$ satisfies assumptions (f1), (f2), and (f3), and uniformly contracts all vertical fibres;
	\item there exists a class of potentials $\overline{\mathscr{P}}_\Sigma$ such that, for every
	$\overline{\Phi} \in \overline{\mathscr{P}}_\Sigma$, one can associate a potential
	$\Phi \in \mathscr{P}_\Sigma$ for which the pairs $(F,\Phi)$ and $(F,\overline{\Phi})$
	share the same equilibrium states.
\end{enumerate}Note that, in order to belong to the class $\mathcal{S}$, the map $G$ must satisfy a condition stronger than~(H1). An application of this framework is provided in Theorem~\ref{S}.

This viewpoint shows that the restriction to fibre-constant potentials is not a limitation of the method, but rather a natural choice of representative within a cohomology class. Extending the theory to non-constant fibre potentials is therefore a co-homological problem, rather than an operator-theoretic one.

The next theorem provides a concrete realization of the class $\mathcal{S}$,
together with an associated class of fibre potentials $\overline{\mathscr{P}}_\Sigma$
that are not necessarily constant along the fibres, for which all the results
established in this work apply.
The proof relies on the previous results, together with co-homological arguments
from \cite{RA3} (see Proposition~\ref{homologo}).
An example of partially hyperbolic attractors over horseshoes satisfying these
assumptions is presented in Example~\ref{ferradura}.

\begin{athm}\label{S}
Suppose that the continuous dynamics $F:\Sigma \longrightarrow \Sigma$
satisfies conditions (f1), (f2), (f3), and uniformly contracts all vertical fibres. Moreover, there exists
$y_0 \in K$ such that
\begin{equation}\label{iurytea}
G(x,y_0)=y_0, \quad \forall\, x \in M.
\end{equation}
Then every H\"older continuous potential
$\bar{\Phi} : M \times K \to \mathbb{R}$ such that $\bar{\varphi}_{y_0} \in \mathscr{P}_M$,
where $\bar{\varphi}_{y_0} : M \to \mathbb{R}$ is defined by
\[
\bar{\varphi}_{y_0}(x) := \bar{\Phi}(x,y_0), \quad x \in M,
\]
admits a unique equilibrium state $\mu_0 \in \mathbf{S}^1$, which moreover
belongs to $\mathbf{S}^\infty$.
Furthermore, if $F$ also satisfies (H2) and $(\alpha \cdot L)^\zeta < 1$,
then $\mu_0$ satisfies Theorems~\ref{regg} and~\ref{disisisii}. That is,
\begin{equation*}
|\mu_0|_\zeta \leq \frac{D}{1-\beta},
\end{equation*}
and
\[
\ho_\zeta(\Sigma) \subset \Theta^1_{\mu_0}, \qquad
\ho_\zeta(\Sigma) \subset \Theta^\infty_{\mu_0}, \qquad
\ho_\zeta(\Sigma) \subset B'_1, \qquad
\ho_\zeta(\Sigma) \subset B'_\infty.
\]
\end{athm}

More generally, we believe that additional examples of systems belonging to
$\mathcal{S}$ may be obtained through co-homological arguments that allow one to
replace a given potential by a co-homologous one that is constant along the fibres.
This type of phenomenon has already appeared in different contexts. For instance,
in Proposition~7.1 of Ferreira and Ramos~\cite{FR}, it is shown that H\"older
continuous potentials satisfying the Bowen property are co-homologous to potentials
that do not depend on the stable direction, while preserving the relevant
thermodynamic properties.

Although the class of potentials considered in~\cite{FR} is different from the
class $\overline{\mathscr{P}}_\Sigma$ introduced here, this result provides further
evidence that the problem of extending thermodynamic results to non--fibre--constant
potentials can often be formulated as a co-homological one.

\noindent\textbf{Open questions.} From this point onward, we raise the following questions:
\begin{enumerate}
	\item Is this functional-analytic framework suitable for proving statistical stability results, as in \cite{RRR2}? Moreover, could it serve as an effective approach to establish other limit theorems, such as the Central Limit Theorem and exponential decay of correlations, in alternative spaces of observables?

	\item Is this new linear operator also suitable for proving linear and quadratic (even higher-order) responses for these maps?

	\item Is it also applicable to systems that are not skew-products?
	
	\item Regarding the applications section (see Section \ref{appppl}), can this approach be applied to study non-contracting fibre maps?

    \item Can co-homological arguments be systematically employed to extend the class
$\mathcal{S}$, by identifying conditions under which non--fibre--constant
potentials are co-homologous to fibre--constant ones, while preserving the relevant
thermodynamic properties?
\end{enumerate}

\noindent\textbf{Plan of the paper.} The paper is structured as follows:

\begin{itemize}
	\item Section 2: we present the general results and settings of this article. We consider a class of systems of the form $F(x,y)=(f(x), G(x,y))$, where the Ruelle--Perron--Frobenius operator of $f$ has a spectral gap, and the transfer operator associated with $G(x,\cdot)$ satisfies certain properties with respect to the $||\cdot||_o$-norm on the vector spaces of signed measures on $K$, for $m$-almost every $x \in M$. These properties are precisely described by Equations~(\ref{gjhfjghfjgj}), (\ref{weakkkk}), and (\ref{Olga1}) or (\ref{Olga2}). In this section, we prove Theorems~\ref{5.8}–\ref{slkdgjsdg}.
	
	\item Section 3: we apply the results of Section 2 to maps $F=(f,G)$, where $f$ is a non-uniformly expanding map and $G$ is a contracting fibre map, possibly with vertical lines of discontinuities. In this section, we prove Theorems~\ref{belongsss}–\ref{S}.
\end{itemize}

\textbf{Acknowledgments.} We are deeply grateful to Ricardo Liberalquino Bioni for his invaluable contributions to this group. We also thank Marcelo Viana and Paulo Varandas for their important insights. Finally, we extend special thanks to Vanessa Ramos and Martin Anderson for their thoughtful comments and support.

\section{General Results}\label{kjdfkjdsfkj}

Throughout this section, we consider skew--product maps
$F : M \times K \longrightarrow M \times K$ of the form $F=(f,G)$, where
$f : M \longrightarrow M$ and $G : M \times K \longrightarrow K$
are measurable maps specified below.
We fix a probability measure $m$ on $M$ satisfying the hypotheses listed in (P1),
and an arbitrary probability measure $m_2$ on the Borel $\sigma$--algebra of the
compact metric space $K$.
For convenience, we set $\Sigma := M \times K$.

\subsubsection{Hypotheses on the base map $f$}

\begin{enumerate}

\item[(P1)] We assume the following conditions.

\begin{enumerate}

\item[(P1.1)] There exists a class of potentials $\mathscr{P}_M$ such that, for each
$\varphi \in \mathscr{P}_M$, there is a corresponding equilibrium state
$m = m(\varphi)$ of $f$ which can be written as $m = h \, \nu$, where
\[
0 < \inf h \leq \sup h < +\infty,
\]
and
\[
\mathcal{L}_\varphi h = \lambda h \quad \text{for some } \lambda > 0.
\]
Moreover, $\nu$ is a conformal measure with Jacobian $\lambda e^{-\varphi}$.

\item[(P1.2)] There exist normed spaces of real-valued functions (not necessarily Banach),
closed under pointwise multiplication,
\[
h, h^{-1} \in (B_s, |\cdot|_s) \subset (B_w, |\cdot|_w),
\qquad |\cdot|_w \leq |\cdot|_s,
\]
such that for every $\varphi \in \mathscr{P}_M$ the Ruelle--Perron--Frobenius operator
$\mathcal{L}_\varphi : B_s \to B_s$, defined by
\begin{equation}\label{hhdfhjdghf}
\mathcal{L}_\varphi g(x)
= \sum_{y \in f^{-1}(x)} g(y) e^{\varphi(y)},
\qquad g \in B_s,
\end{equation}
has a spectral decomposition of the form
\[
\operatorname{spec}(\mathcal{L}_\varphi)
= \{\lambda\} \cup \mathcal{U},
\]
where $\mathcal{U}$ is contained in a disk of radius strictly smaller than $|\lambda|$.

In particular, the normalized operator
\[
\overline{\mathcal{L}}_\varphi := \frac{1}{\lambda}\mathcal{L}_\varphi
\]
admits a decomposition
\[
\overline{\mathcal{L}}_\varphi = \mathcal{P}_f + \mathcal{N}_f,
\]
where
\begin{enumerate}
\item $\mathcal{P}_f^2 = \mathcal{P}_f$ and $\dim \operatorname{Im}(\mathcal{P}_f) = 1$;
\item there exist constants $D > 0$ and $0 \le r < 1$ such that
\[
|\mathcal{N}_f^n g|_s \le D r^n |g|_s, \qquad \forall g \in B_s;
\]
\item $\mathcal{N}_f \mathcal{P}_f = \mathcal{P}_f \mathcal{N}_f = 0$.
\end{enumerate}

Consequently, if $\mathcal{L}_\varphi^*$ denotes the dual of $\mathcal{L}_\varphi$,
then $\mathcal{L}_\varphi^* \nu = \lambda \nu$, and hence
\begin{equation}\label{fixeddd}
\overline{\mathcal{L}}_\varphi^* \nu = \nu.
\end{equation}

\end{enumerate}

\item[(P2)] There exists a measurable partition of $M$ into disjoint sets
$P_1, \dots, P_{\deg(f)}$ (up to $\nu$--null sets) such that, for each $i$,
the restriction $f_i := f|_{P_i}$ is a bijection onto $M$, and
\[
\nu\Big( \bigcup_i P_i \Big) = \nu(M).
\]

\end{enumerate}

\medskip

In the sequel, we assume the existence of an $F$--invariant probability measure
whose projection onto the base coincides with the $f$--invariant measure $m$.

\begin{enumerate}
\item[(P3)] There exists an $F$--invariant probability measure $\mu_0$ such that
\[
\pi_{1*}\mu_0 = m,
\]where $\pi_1 : \Sigma \to M$ denotes the projection $\pi_1(x,y)=x$ and $\pi_{1*}$
is the associated pushforward.
\end{enumerate}

By (P1), the following theorems hold for the normalized operator $$\mathcal{\overline{L}}_\varphi:=\dfrac{1}{\lambda}\mathcal{L}_\varphi.$$ 
\begin{theorem}\label{LYgeral}
	There exist constants $B_1>0$, $C_1>0$ and $0<\beta_1<1$ such that for all $%
	g \in B_s$, and all $n \geq 1$, it holds
	
	\begin{equation*}
		|\mathcal{\overline{L}}_\varphi^n(g)|_s \leq B_1 \beta _1 ^n | g|_s + C_1|g|_{w}.
		\label{lasotaiiii}
	\end{equation*}
\end{theorem}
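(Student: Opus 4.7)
The plan is to leverage the spectral decomposition of $\mathcal{\overline{L}}_\varphi$ assumed in (P1.2). Writing $\mathcal{\overline{L}}_\varphi = \func{P}_f + \func{N}_f$, the orthogonality $\func{P}_f \func{N}_f = \func{N}_f \func{P}_f = 0$ together with $\func{P}_f^2 = \func{P}_f$ yields, by a straightforward induction on $n$,
\[
\mathcal{\overline{L}}_\varphi^n = \func{P}_f + \func{N}_f^n \qquad \text{for every } n \geq 1.
\]
The estimate on the nilpotent part is given for free by (P1.2): $|\func{N}_f^n g|_s \leq D r^n |g|_s$. Thus the whole problem reduces to controlling $|\func{P}_f g|_s$ by the weak norm $|g|_w$.

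Next I would identify the projection explicitly. Since $\dim \im(\func{P}_f)=1$ and $h$ is a fixed point of $\mathcal{\overline{L}}_\varphi$, one has $\func{P}_f g = c(g)\, h$ for some linear functional $c$ on $B_s$. To pin down $c$, pair with the eigenfunctional $\nu$ (recall from (\ref{fixeddd}) that $\mathcal{\overline{L}}_\varphi^{*}\nu = \nu$): integrating $\mathcal{\overline{L}}_\varphi^n g = \func{P}_f g + \func{N}_f^n g$ against $\nu$ gives
\[
\int g\, d\nu = c(g)\int h\, d\nu + \int \func{N}_f^n g\, d\nu \qquad \forall n \geq 1,
\]
and letting $n \to \infty$ forces $\int \func{N}_f g\, d\nu = 0$ (the residual term is iterated away), so $c(g) = (\int g\, d\nu)/\int h\, d\nu$. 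Hence
\[
|\func{P}_f g|_s \leq \frac{|h|_s}{\int h\, d\nu}\left|\int g\, d\nu\right|.
\]

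The final, and main, step is to bound $|\int g\, d\nu|$ by $|g|_w$. This is the one point where the proof must use compatibility between the weak norm and the conformal measure: in the intended applications $B_w$ is modeled on an $L^1$-type space in which $g \mapsto \int g\, d\nu$ is automatically continuous, so that $|\int g\, d\nu| \leq \|\nu\|_{B_w^{*}}\, |g|_w$. Setting $C_1 := |h|_s \|\nu\|_{B_w^{*}}/\int h\, d\nu$, $B_1 := D$ and $\beta_1 := r$, the triangle inequality
\[
|\mathcal{\overline{L}}_\varphi^{n} g|_s \leq |\func{P}_f g|_s + |\func{N}_f^n g|_s \leq C_1 |g|_w + B_1 \beta_1^n |g|_s
\]
is the claimed Lasota–Yorke bound.

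The hard part is really conceptual rather than computational: one must extract, from the abstract spectral decomposition, the dual representation of $\func{P}_f$ as integration against $\nu$, and then invoke (or add, if not already tacit in the definition of $(B_w,|\cdot|_w)$) the continuity of this integration with respect to the weak norm. Once that ingredient is in hand, the LY inequality drops out mechanically from the spectral decomposition.
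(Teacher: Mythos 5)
Your argument is correct in outline and starts from the same decomposition $\mathcal{\overline{L}}_\varphi^{\,n}=\func{P}_f+\func{N}_f^{\,n}$, but it handles the crucial step --- bounding $|\func{P}_f g|_s$ by $|g|_w$ --- by a genuinely different route. You make the projection explicit, $\func{P}_f g = \bigl(\int g\,d\nu \big/ \int h\,d\nu\bigr)h$, which forces you to assume that integration against $\nu$ is continuous for the weak norm (i.e.\ $\nu\in B_w^{*}$); note that you in fact need this assumption twice, since the passage $\int \func{N}_f^{\,n}g\,d\nu\to 0$ already uses it, and you should also record the small verification that $h$ spans $\im(\func{P}_f)$ (from $\mathcal{\overline{L}}_\varphi h=h$ and $|\func{N}_f^{\,n}h|_s\to 0$ one gets $\func{P}_f h=h$). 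The paper instead stays abstract: since $\dim\im(\func{P}_f)=1$, the ratio $|\func{P}_f g|_s/|\func{P}_f g|_w$ is a constant $K$ independent of $g$, and writing $\func{P}_f g=\mathcal{\overline{L}}_\varphi^{\,n}\func{P}_f g$ gives $|\func{P}_f g|_s\le K\bigl(\|\mathcal{\overline{L}}_\varphi^{\,n}\|_w\,|g|_w+Dr^n|g|_s\bigr)$, so the needed extra ingredient there is uniform boundedness of the iterates on $(B_w,|\cdot|_w)$ rather than $\nu\in B_w^{*}$. Each approach therefore smuggles in one compatibility hypothesis between $\mathcal{\overline{L}}_\varphi$ (or $\nu$) and the weak norm that is not literally contained in (P1); both hypotheses hold in the paper's applications, where $B_w=C^0$ with the sup norm and $\nu$ is a Borel probability. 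Your version has the advantage of identifying the spectral projection concretely (which is useful elsewhere), while the paper's version is slightly more economical in that it never needs to know what $\func{P}_f$ is.
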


\begin{proof}
By hypothesis (P1.2), we have the decomposition
\[
\overline{\mathcal L}_\varphi = \mathcal P_f + \mathcal N_f,
\]
where $\mathcal P_f^2 = \mathcal P_f$ and
$\mathcal N_f \mathcal P_f = \mathcal P_f \mathcal N_f = 0$.
Therefore, for all $n \ge 1$,
\[
\overline{\mathcal L}_\varphi^n = \mathcal P_f + \mathcal N_f^n.
\]
Hence,
\[
|\overline{\mathcal L}_\varphi^n g|_s
\le |\mathcal P_f g|_s + |\mathcal N_f^n g|_s
\le |\mathcal P_f g|_s + D r^n |g|_s.
\]

Since $\mathcal P_f g = \overline{\mathcal L}_\varphi^n \mathcal P_f g$ for every $n \in \mathbb N$, we obtain
\[
|\mathcal P_f g|_s
= \frac{|\mathcal P_f g|_s}{|\mathcal P_f g|_w}
\, |\overline{\mathcal L}_\varphi^n \mathcal P_f g|_w.
\]
Using the triangle inequality, this yields
\[
|\mathcal P_f g|_s
\le
\frac{|\mathcal P_f g|_s}{|\mathcal P_f g|_w}
\left(
|\overline{\mathcal L}_\varphi^n g|_w
+ |\overline{\mathcal L}_\varphi^n (g - \mathcal P_f g)|_w
\right).
\]

Since $\dim(\operatorname{Im}(\mathcal P_f)) = 1$, all norms are equivalent on
$\operatorname{Im}(\mathcal P_f)$, and therefore the ratio
$\frac{|\mathcal P_f g|_s}{|\mathcal P_f g|_w}$ is uniformly bounded.
Moreover, $\overline{\mathcal L}_\varphi$ is bounded on $B_w$, and
$|\overline{\mathcal L}_\varphi^n (g - \mathcal P_f g)|_w \le D r^n |g|_s$.
Combining these estimates concludes the proof.
\end{proof}The next theorem is a direct consequence of (P1.2), so we omit the proof.

\begin{theorem}\label{loiub}
There exist constants $0 < r < 1$ and $D > 0$ such that, for every
$g \in \Ker(\mathcal P_f)$ and every $n \ge 1$, one has
\[
|\overline{\mathcal L}_\varphi^{\,n}(g)|_s \le D r^n |g|_s.
\]
\end{theorem}

\begin{remark}\label{yturhfvb}
Recall that the Jacobian of the measure $m$ with respect to $f$ is given by
\[
J_m(f) = \lambda e^{-\varphi} \frac{h \circ f}{h}.
\]
Moreover, since $0 < \inf h \le \sup h < +\infty$ (see (P1.1)),
the measures $m$ and $\nu$ are equivalent.

Define the conjugated transfer operator
$\mathcal{L}_{\varphi,h} : B_s \to B_s$ by
\begin{equation}\label{jhfdgjhfg}
\mathcal{L}_{\varphi,h}(g)
:= \frac{\mathcal{L}_\varphi(g h)}{h},
\qquad g \in B_s.
\end{equation}
A direct computation shows that $\mathcal{L}_{\varphi,h}$ satisfies (P1.1) and (P1.2) with $h \equiv 1$.
In particular, $\mathcal{L}_{\varphi,h}(1) = \lambda.$
\end{remark}

\begin{remark}\label{jhdfgjsd}
The hypotheses of the previous theorems are also satisfied by the operator
$\mathcal{L}_{\varphi,h}$ defined in Remark~\ref{yturhfvb} by
Equation~\eqref{jhfdgjhfg}. Consequently, there exist constants
$B_3 > 0$, $C_3 > 0$ and $0 < \beta_3 < 1$ such that, for every $g \in B_s$
and every $n \ge 1$, one has
\begin{equation}\label{lasotaiiiityrd}
|\overline{\mathcal{L}}_{\varphi,h}^{\,n}(g)|_s
\le B_3 \beta_3^n |g|_s + C_3 |g|_w.
\end{equation}
Moreover, there exist constants $D_3 > 0$ and $0 < r_3 < 1$ such that, for every
$g \in \Ker(\mathcal{P}_f)$ and every $n \ge 1$, it holds
\begin{equation}\label{lasotaiisdffrr}
|\overline{\mathcal{L}}_{\varphi,h}^{\,n}(g)|_s
\le D_3 r_3^n |g|_s.
\end{equation}
\end{remark}

\subsubsection{Hypotheses on the fibre map $G$}

Let $\mathcal{SM}(K)$ denote the vector space of signed measures on $K$, endowed
with the usual operations of addition and scalar multiplication.
We define the family of fibres
\[
\mathcal{F} := \{ \gamma \subset \Sigma \mid \gamma = \pi_1^{-1}(x), \ x \in M \}.
\]
For a given $\gamma \in \mathcal{F}$, we define the induced fibre map
$F_\gamma : K \to K$ by
\begin{equation}\label{ritiruwt}
F_\gamma := \pi_2 \circ F|_{\gamma} \circ \pi_{\gamma,2}^{-1},
\end{equation}
where $\pi_2(x,y) = y$ and $\pi_{\gamma,2}$ denotes the restriction of $\pi_2$
to the fibre $\gamma$.

\begin{enumerate}
\item[(G1)] There exist a norm $\|\cdot\|_o$ on $\mathcal{SM}(K)$ and a constant
$0 \le \alpha < 1$ such that the following properties hold:
\begin{enumerate}
\item[(G1.1)] For every $\mu \in \mathcal{SM}(K)$ and for $\nu$-almost every
fibre $\gamma \in \mathcal{F}$, one has
\begin{equation}\label{gjhfjghfjgj}
\|F_{\gamma *}\mu\|_o \le \alpha \|\mu\|_o + |\mu(K)|.
\end{equation}
In particular, if $\mu(K)=0$, then for $\nu$-almost every $\gamma \in \mathcal{F}$
it holds
\[
\|F_{\gamma *}\mu\|_o \le \alpha \|\mu\|_o.
\]

\item[(G1.2)] For every $\mu \in \mathcal{SM}(K)$ and every $\gamma \in \mathcal{F}$,
\begin{equation}\label{weakkkk}
\|F_{\gamma *}\mu\|_o \le \|\mu\|_o.
\end{equation}
\end{enumerate}
\end{enumerate}

The following hypothesis will only become clear after
Definition~\ref{linffff}. We state it here since it concerns the
$\|\cdot\|_o$--norm on measures.

Unlike the previous assumptions, we do not assume that the system
satisfies both Equations~\eqref{Olga1} and~\eqref{Olga2}.
Instead, we explicitly indicate in which results each of these
conditions is required.

\begin{enumerate}
\item[(G2)] One of the following conditions holds:
\begin{equation}\label{Olga1}
B_w \subset L_\nu^\infty
\quad \text{and} \quad
|\phi|_\infty \le |\phi|_w \le \|\mu\|_\infty,
\qquad \forall \mu \in \mathbf{L}^\infty,
\end{equation}
or
\begin{equation}\label{Olga2}
B_w \subset L_\nu^1
\quad \text{and} \quad
|\phi|_1 \le |\phi|_w \le \|\mu\|_1,
\qquad \forall \mu \in \mathbf{L}^1.
\end{equation}

\item[(G3)] For every probability measure $\mu$, one has
\[
\|\mu\|_o = 1.
\]
\end{enumerate}

\subsection{The spaces and the operators}\label{hfgjfdh}

\subsubsection*{Rokhlin's Disintegration Theorem}

We briefly recall some standard results and definitions concerning the
disintegration of measures.

Let $(\Sigma,\mathcal{B},\mu)$ be a probability space and let $\Gamma$ be a
partition of $\Sigma$ into measurable sets $\gamma \in \mathcal{B}$. Denote by
$\pi : \Sigma \to \Gamma$ the projection that associates to each point
$x \in \Sigma$ the element $\gamma_x \in \Gamma$ containing $x$, that is,
$\pi(x) = \gamma_x$. Let $\widehat{\mathcal{B}}$ be the $\sigma$--algebra on
$\Gamma$ induced by $\pi$, namely, a subset $\mathcal{Q} \subset \Gamma$ is
measurable if and only if $\pi^{-1}(\mathcal{Q}) \in \mathcal{B}$. The
\emph{quotient measure} $\mu_x$ on $\Gamma$ is defined by
\[
\mu_x(\mathcal{Q}) := \mu\bigl(\pi^{-1}(\mathcal{Q})\bigr).
\]

The proof of the following theorem can be found in Theorem~5.1.11 and Proposition~5.1.7 of~\cite{Kva}.

\begin{theorem}[Rokhlin's Disintegration Theorem]\label{rok}
Suppose that $\Sigma$ is a complete and separable metric space, $\Gamma$ is a
measurable partition of $\Sigma$, and $\mu$ is a probability measure on
$\Sigma$. Then $\mu$ admits a disintegration relative to $\Gamma$, that is,
there exist a family $\{\mu_\gamma\}_{\gamma \in \Gamma}$ of probability
measures on $\Sigma$ and a quotient measure $\mu_x$ such that:
\begin{enumerate}
\item[(a)] $\mu_\gamma(\gamma) = 1$ for $\mu_x$--almost every $\gamma \in \Gamma$;

\item[(b)] for every measurable set $E \subset \Sigma$, the map
\[
\Gamma \ni \gamma \longmapsto \mu_\gamma(E)
\]
is measurable;

\item[(c)] for every measurable set $E \subset \Sigma$, one has
\[
\mu(E) = \int_\Gamma \mu_\gamma(E)\, d\mu_x(\gamma);
\]

\item[(d)] if the $\sigma$--algebra $\mathcal{B}$ admits a countable generator,
then the disintegration is unique in the following sense: if
$\{\mu'_\gamma\}_{\gamma \in \Gamma}$ is another family of probability measures
such that $(\{\mu'_\gamma\},\mu_x)$ is a disintegration of $\mu$ relative to
$\Gamma$, then
\[
\mu_\gamma = \mu'_\gamma
\quad \text{for } \mu_x\text{--almost every } \gamma \in \Gamma.
\]
\end{enumerate}
\end{theorem}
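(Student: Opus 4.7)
The plan is to build the disintegration from Radon--Nikodym derivatives on a countable generating algebra, using the Polish structure of $\Sigma$ to ensure the resulting fiber measures are concentrated on the correct atoms. First I would fix a countable algebra $\mathcal{A}\subset\mathcal{B}$ generating the Borel $\sigma$-algebra, which exists because $\Sigma$ is separable metric. For each $A\in\mathcal{A}$, define a finite signed measure $\nu_A$ on the quotient by $\nu_A(\mathcal{Q})=\mu(\pi^{-1}(\mathcal{Q})\cap A)$. Since $\nu_A\ll\mu_x$, set
\[
\rho_A(\gamma):=\frac{d\nu_A}{d\mu_x}(\gamma),
\]
and tentatively define $\mu_\gamma(A):=\rho_A(\gamma)$.

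Next I would show that, outside a $\mu_x$-null set $N\subset\Gamma$, the map $A\mapsto\mu_\gamma(A)$ extends from $\mathcal{A}$ to a Borel probability on $\Sigma$. For this, one uses countability of $\mathcal{A}$ to absorb into a single null set all the exceptional sets on which the relations $\rho_{A\cup B}=\rho_A+\rho_B$ (for disjoint $A,B\in\mathcal{A}$), $\rho_\Sigma=1$, $0\le\rho_A\le 1$, and monotone continuity along decreasing sequences in $\mathcal{A}$ might fail; each of these is an $\mu_x$-a.e.\ identity by uniqueness of Radon--Nikodym derivatives. On $\Gamma\setminus N$ the resulting premeasure on $\mathcal{A}$ is $\sigma$-additive and extends uniquely to a Borel probability $\mu_\gamma$ on $\Sigma$ via Carath\'eodory; on $N$ define $\mu_\gamma$ arbitrarily, say as a point mass.

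Property (b) is automatic: for $A\in\mathcal{A}$ the function $\gamma\mapsto\rho_A(\gamma)$ is $\widehat{\mathcal{B}}$-measurable by definition, and the class of Borel sets $E$ for which $\gamma\mapsto\mu_\gamma(E)$ is measurable is a monotone class containing $\mathcal{A}$, hence equals $\mathcal{B}$. Property (c) is equivalent to $\int\rho_A\,d\mu_x=\mu(A)$ for $A\in\mathcal{A}$, which is just the definition of $\nu_A$, and then extends by a monotone class argument. For uniqueness (d), if $\{\mu_\gamma'\}$ is another disintegration, then for each fixed $A\in\mathcal{A}$ both $\gamma\mapsto\mu_\gamma(A)$ and $\gamma\mapsto\mu_\gamma'(A)$ are versions of $d\nu_A/d\mu_x$, hence agree $\mu_x$-a.e.; countability of $\mathcal{A}$ then gives a single full-measure set of $\gamma$ on which $\mu_\gamma$ and $\mu_\gamma'$ agree on a generating algebra and therefore coincide as Borel measures.

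The main obstacle is property (a), namely $\mu_\gamma(\gamma)=1$ for $\mu_x$-a.e.\ $\gamma$. This is where the Polish hypothesis is genuinely used: one must arrange that the fiber measures sit on the correct atom, which is typically not automatic because the partition elements $\gamma$ need not lie in the generating algebra and may have empty interior. I would exploit separability to choose a countable basis $\{U_n\}$ and the measurability of $\Gamma$ to refine $\mathcal{A}$ so that it separates atoms of $\Gamma$ as much as possible; then a nested-cylinder argument, together with the conditional-probability identity on the sets $\pi^{-1}(\mathcal{Q})$, forces $\mu_\gamma$ to charge the atom $\gamma$ full mass. Equivalently, one may invoke the fact that for a Polish space the measure $\mu$ is Radon, approximate each $\gamma$ from outside by a decreasing sequence of $\pi$-saturated Borel sets, and apply dominated convergence in the formula from (c) to identify $\mu_\gamma(\gamma)=1$ off a $\mu_x$-null set.
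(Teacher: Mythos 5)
The paper itself does not prove this theorem; it cites \cite{Kva}, where the argument is the classical martingale one: exhibit the Rokhlin-measurable partition as the limit of an increasing sequence of countable partitions, define the conditional measures trivially at each finite stage by normalized restriction, and pass to the limit by martingale convergence (which gives (a)--(d) simultaneously, with concentration on atoms built in). Your route — Radon--Nikodym derivatives on a countable generating algebra followed by a Carath\'eodory extension — is a genuinely different construction (the one usually used to build regular conditional probabilities), and your handling of items (b), (c), and (d) is essentially correct.

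There is, however, a real gap in the construction of the fiber measures. You claim that ``monotone continuity along decreasing sequences in $\mathcal{A}$'' is, like finite additivity and normalization, a countable family of $\mu_x$-a.e.\ identities, and so can be absorbed into a single null set. That is false: while $\mathcal{A}$ is countable, the collection of decreasing sequences $A_n\downarrow\emptyset$ inside $\mathcal{A}$ has the cardinality of the continuum, so one cannot union up all the exceptional null sets. Finite additivity, normalization, and boundedness on a countable algebra do not by themselves give $\sigma$-additivity off a single null set; this is exactly where the Polish hypothesis must enter the extension step, not only your discussion of (a). The standard repair is a compact-class / inner-regularity argument: since $\mu$ is Radon, adjoin to the (still countable) algebra compact sets $K_{A,n}\subset A$ with $\mu(A\setminus K_{A,n})\to 0$; then $\mu_\gamma(A)=\sup_n\mu_\gamma(K_{A,n})$ is a countable family of a.e.\ identities, and compact-regularity of $\mu_\gamma$ off one null set forces $\sigma$-additivity. (Alternatively, transport to $\mathbb{R}$ by a Borel isomorphism and build a conditional c.d.f.\ along the rationals, where the linear order reduces continuity to countably many checks.) Your sketch of (a) is also too loose to verify as written: the outer approximation of $\gamma$ by saturated sets must come from a single countable separating family, fixed in advance and independent of $\gamma$, which is exactly what Rokhlin measurability of $\Gamma$ supplies; phrased instead as a $\gamma$-dependent decreasing sequence, the argument accumulates uncountably many null sets and is circular.
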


\subsubsection{The spaces}
Let $\mathcal{SB}(\Sigma )$ be the space of Borel  signed measures on $\Sigma : = M \times K$. Given $\mu \in \mathcal{SB}(\Sigma )$ denote by $\mu ^{+}$ and $\mu ^{-}$
the positive and the negative parts of its Jordan decomposition, $\mu =\mu
^{+}-\mu ^{-}$ (see remark {\ref{ghtyhh}). Let $\pi _1:\Sigma
	\longrightarrow M$ \ be the projection defined by $\pi_1 (x,y)=x$, denote
	by $\pi _{1\ast}:$}$\mathcal{SB}(\Sigma )\rightarrow \mathcal{SB}(M)${\
	the pushforward map associated to $\pi _1$. Denote by $\mathbf{AB}_\nu$ the
	set of signed measures $\mu \in \mathcal{SB}(\Sigma )$ such that its
	associated positive and negative marginal measures, $\pi _{1\ast }\mu ^{+}$
	and $\pi _{1\ast }\mu ^{-},$ are absolutely continuous with respect to $\nu$, i.e.,
	\begin{equation*}
		\mathbf{AB}_\nu=\{\mu \in \mathcal{SB}(\Sigma ):\pi _{1\ast }\mu ^{+} \ll \nu\ \ 
		\mathnormal{and}\ \ \pi _{1\ast }\mu ^{-} \ll \nu\}.  \label{thespace1}
	\end{equation*}%
}Given a \emph{probability measure} $\mu \in \mathbf{AB}_\nu$ on $\Sigma $,
Theorem \ref{rok} describes a disintegration $\left( \{\mu _{\gamma
}\}_{\gamma },\mu _{x}\right) $ along $\mathcal{F}$ by a family $\{\mu _{\gamma }\}_{\gamma }$ of probability measures
on the stable leaves and, since 
$\mu \in \mathbf{AB}_\nu$, $\mu _{x}$ can be identified with a non negative
marginal density $\phi _{1}:M\longrightarrow \mathbb{R}$, defined almost
everywhere, with $|\phi _{1}|_{1}=1$. \ For a general (non normalized)
positive measure $\mu \in \mathbf{AB}_\nu$ we can define its disintegration in
the same way. In this case, $\mu _{\gamma }$ are still probability measures, $%
\phi _{1}$ is still defined and $\ |\phi _{1}|_{1}=\mu (\Sigma )$.

Analogously, we define the space 
\begin{equation*}
	\mathbf{AB}_m=\{\mu \in \mathcal{SB}(\Sigma ):\pi _{1\ast }\mu ^{+} \ll m\ \ 
	\mathnormal{and}\ \ \pi _{1\ast }\mu ^{-} \ll m\}.  \label{thespace1sd}
\end{equation*}

\begin{remark}
Note that $\mathbf{AB}_m = \mathbf{AB}_\nu$, as the measures $m$ and $\nu$ are equivalent. The only difference lies in the fact that for a given measure $\mu$, the densities $\dfrac{d\pi _{1 \ast}\mu}{d\nu}$ and $\dfrac{d\pi _{1 \ast}\mu}{dm}$ may differ.
\end{remark}

\begin{definition}
	Let $\pi _{2}:\Sigma \longrightarrow K$ be the projection defined by $%
	\pi _{2}(x,y)=y$. Let $\gamma \in \mathcal{F}$, consider $\pi
	_{\gamma ,2}:\gamma \longrightarrow K$, the restriction of the map $\pi
	_{2}:\Sigma \longrightarrow K$ to the vertical leaf $\gamma $, and the
	associated pushforward map $\pi _{\gamma ,2\ast }$. Given a positive measure 
	$\mu \in \mathbf{AB}_i$ ($i=\nu,m$) and its disintegration along the stable leaves $%
	\mathcal{F}$, $\left( \{\mu _{\gamma }\}_{\gamma },\mu _{x}=\phi
	_{1} i \right)$ for $i=\nu,m$, we define the \textbf{restriction of $\mu $ on $\gamma $}
	and denote it by $\mu |_{\gamma }$ as the positive measure on $K$ (not
	on the leaf $\gamma $) defined, for all measurable set $A\subset K$, as 
	\begin{equation*}
		\mu |_{\gamma }(A)=\pi _{\gamma ,2\ast }(\phi _{1}(\gamma )\mu _{\gamma
		})(A).
	\end{equation*}%
	For a given signed measure $\mu \in \mathbf{AB}_i \ (i=\nu,m)$ and its Jordan
	decomposition $\mu =\mu ^{+}-\mu ^{-}$, define the \textbf{restriction of $%
		\mu $ on $\gamma $} by%
	\begin{equation*}
		\mu |_{\gamma }=\mu ^{+}|_{\gamma }-\mu ^{-}|_{\gamma }.
	\end{equation*}%
	\label{restrictionmeasure}
\end{definition}

\begin{remark}
	\label{ghtyhh}As proved in Appendix 2 of \cite {GLu}, the restriction $%
	\mu |_{\gamma }$ does not depend on the decomposition. Precisely, if $\mu
	=\mu _{1}-\mu _{2}$, where $\mu _{1}$ and $\mu _{2}$ are any positive
	measures, then $\mu |_{\gamma }=\mu _{1}|_{\gamma }-\mu _{2}|_{\gamma }$ $%
	m_{1}$-a.e. $\gamma \in M$. Moreover, as showed in \cite{GLu}, the restriction is linear in the sense that $(\mu_1 + \mu_2)|_\gamma = \mu_1|_\gamma + \mu_2|_\gamma$.
\end{remark}

\begin{definition}\label{l1}
	Let $\mathbf{L}^{1}_\nu\subseteq \mathbf{AB}_\nu(\Sigma )$ be defined as%
	\begin{equation*}
		\mathbf{L}^{1}_\nu=\left\{ \mu \in \mathbf{AB}_\nu:\int ||\mu
		|_{\gamma }||_od\nu<\infty \right\}.
	\end{equation*}%
	Define the function $||\cdot ||_{1}:\mathbf{L}^{1}_\nu\longrightarrow \mathbb{R}$ by%
	\begin{equation*}
		||\mu ||_{1}=\int ||\mu
		|_{\gamma }||_od\nu.
	\end{equation*}
\end{definition}

\begin{definition}\label{linffff}
	Let $\mathbf{L}^{\infty }_m\subseteq \mathbf{AB}_m(\Sigma )$ be defined as%
	\begin{equation*}
		\mathbf{L}^{\infty}_m=\left\{ \mu \in \mathbf{AB}_m:\esssup (||\mu
		^{+}|_{\gamma }-\mu ^{-}|_{\gamma }||_o<\infty \right\},
	\end{equation*}%
	where the essential supremum is taken over $M$ with respect to $m$.
	Define the function $||\cdot ||_{\infty }:\mathbf{L}^{\infty
	}_m\longrightarrow \mathbb{R}$ by%
	\begin{equation*}
		||\mu ||_{\infty }=\esssup ||\mu ^{+}|_{\gamma }-\mu ^{-}|_{\gamma
		}||_o.
	\end{equation*}
\end{definition}Given that the spaces $\mathbf{L}^{\infty }_m$ and $\mathbf{L}^{1}_\nu$ depend on $m$ and $\nu$ respectively, and there is no risk of confusion, we will refer to them simply as $\mathbf{L}^{\infty }$ and $\mathbf{L}^{1}$ from this point forward.

\begin{definition}\label{s1111}We define the strong space $\mathbf{S}^1$ by
	\begin{equation}\label{s1}
		\mathbf{S}^{1}=\left\{ \mu \in \mathbf{L}^{1};\phi _{1}\in B_s \right\},
	\end{equation}%
	and the function, $||\cdot ||_{\mathbf{S}^{1}}:\mathbf{S}^{1}\longrightarrow 
	\mathbb{R}$, defined by%
	\begin{equation*}
		||\mu ||_{\mathbf{S}^{1}}=|\phi _{1}|_{s}+||\mu ||_{1}.
	\end{equation*}
\end{definition}

\begin{definition}\label{sinf}
	Finally, consider the following set of signed measures on $\Sigma $%
	\begin{equation}\label{sinfi}
		\mathbf{S}^{\infty }=\left\{ \mu \in \mathbf{L}^{\infty };\phi _{1}\in
		B_s \right\},
	\end{equation}%
	and the function, $||\cdot ||_{\mathbf{S}^{\infty }}:\mathbf{S}^{\infty }\longrightarrow 
	\mathbb{R}$, defined by%
	\begin{equation*}
		||\mu ||_{\mathbf{S}^{\infty }}=|\phi _{1}|_s+||\mu ||_{\infty }.
	\end{equation*}
\end{definition}
The proof of the next Proposition is straightforward, so we skip the proof.

\begin{proposition}
	$\left( \mathbf{L}^{\infty },||\cdot ||_{\infty }\right) $, $\left( \mathbf{L}^{1},||\cdot ||_{1}\right)$, $\left(
	\mathbf{S}^{1},||\cdot||_{\mathbf{S}^{1}}\right)$ and $\left(
	\mathbf{S}^{\infty},||\cdot||_{\mathbf{S}^{\infty}}\right) $ are normed vector spaces.
\end{proposition}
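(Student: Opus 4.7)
The approach is standard: verify in each case that the set is closed under the linear operations inherited from $\mathcal{SB}(\Sigma)$, and that the proposed functional satisfies the three norm axioms (homogeneity, subadditivity, and positive-definiteness). Two ingredients will do all the work: the linearity of the restriction operation $(\mu_1+\mu_2)|_\gamma = \mu_1|_\gamma + \mu_2|_\gamma$ recorded in Remark \ref{ghtyhh}, together with the scalar identity $(c\mu)|_\gamma = c\,\mu|_\gamma$; and the fact that $\|\cdot\|_o$ is already a norm on $\mathcal{SM}(K)$ by assumption (G1).

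First I would treat $(\mathcal{L}^{1}, \|\cdot\|_1)$. Scalar closure and homogeneity follow from pulling $|c|$ through $\|\cdot\|_o$ and then through the integral. Given $\mu_1,\mu_2 \in \mathcal{L}^1$, applying the triangle inequality for $\|\cdot\|_o$ pointwise in $\gamma$ and integrating against $\nu$ yields $\|\mu_1+\mu_2\|_1 \leq \|\mu_1\|_1 + \|\mu_2\|_1$, which simultaneously establishes additive closure and subadditivity. For positive-definiteness, if $\|\mu\|_1=0$ then $\|\mu|_\gamma\|_o = 0$ for $\nu$-a.e.\ $\gamma$; since $\|\cdot\|_o$ is a norm, this forces $\mu|_\gamma = 0$ for $\nu$-a.e.\ $\gamma$, and item (c) of Rokhlin's Theorem \ref{rok} applied to $\mu^+$ and $\mu^-$ through Definition \ref{restrictionmeasure} then gives $\mu^\pm(E) = 0$ for every measurable $E$, hence $\mu = 0$. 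The same argument, with essential suprema over $M$ with respect to $m$ replacing the integral against $\nu$, handles $(\mathcal{L}^{\infty}, \|\cdot\|_\infty)$.

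For $(S^1, \|\cdot\|_{S^1})$ and $(S^\infty, \|\cdot\|_{S^\infty})$, I would note that the marginal density $\phi_1 = d\pi_{1\ast}\mu/d\nu$ (respectively $d\pi_{1\ast}\mu/dm$) depends linearly on $\mu$ by linearity of pushforward and of the Radon--Nikodym derivative, so the condition $\phi_1 \in B_s$ cuts out linear subspaces of $\mathcal{L}^1$ and $\mathcal{L}^\infty$; the assigned expressions $|\phi_1|_s + \|\mu\|_1$ and $|\phi_1|_s + \|\mu\|_\infty$ then inherit all three norm axioms termwise, since $|\cdot|_s$ is a norm on $B_s$ by (P1.2) and the $\|\cdot\|_1$, $\|\cdot\|_\infty$ pieces have just been shown to be norms. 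The only step throughout this verification that requires any care is the \emph{definiteness} implication ``$\mu|_\gamma = 0$ for a.e.\ $\gamma \Rightarrow \mu = 0$'' for signed $\mu$: because $\mu|_\gamma$ is defined via the Jordan decomposition and the pushforward $\pi_{\gamma,2\ast}$, recovering $\mu$ from the family $\{\mu|_\gamma\}_\gamma$ is not automatic and one must invoke Rokhlin's theorem after separately disintegrating $\mu^+$ and $\mu^-$, appealing to Remark \ref{ghtyhh} to guarantee that the conclusion is independent of the chosen decomposition. Every other verification is a direct transcription of the norm axioms through the linear restriction operation.
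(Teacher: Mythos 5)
The paper omits this proof entirely (``straightforward, so we skip the proof''), so there is no authorial argument to compare against; your verification is a correct realization of the routine axiom-checking the authors had in mind, and the two main ingredients you isolate --- linearity of restriction from Remark \ref{ghtyhh} and $\|\cdot\|_o$ being a norm on $\mathcal{SM}(K)$ --- are exactly the right ones. One small imprecision in the definiteness step: from $\mu|_\gamma = 0$ a.e.\ you only get $\mu^+|_\gamma = \mu^-|_\gamma$ a.e., which via Rokhlin gives $\mu(E)=\mu^+(E)-\mu^-(E)=0$ directly for every $E$, rather than $\mu^\pm(E)=0$ separately as you wrote; to reach $\mu^\pm(E)=0$ one would additionally observe that $\mu^+=\mu^-$ together with mutual singularity forces both to vanish. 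Either route closes the argument, so this is a matter of exposition rather than a gap.
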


Finally, we define the following sets of zero-average measures in $\mathbf{S}^{1}$ and $\mathbf{S}^{\infty}$:
\begin{equation}
\mathbf{V}^1 := \{ \mu \in \mathbf{S}^{1} : \phi_1 \in \Ker(\mathcal{P}_f) \}, \label{mathVV}
\end{equation}
and
\begin{equation}
\mathbf{V}^\infty := \{ \mu \in \mathbf{S}^\infty : \phi_1 \in \Ker(\mathcal{P}_f) \}. \label{mathV}
\end{equation}

\subsubsection{The operators}In this section, we define linear operators $\func{F} _\Phi:\mathbf{AB}_\nu \longrightarrow \mathbf{AB}_\nu$ and $\func{F} _{\Phi,h}:\mathbf{AB}_m \longrightarrow \mathbf{AB}_m$ by selecting appropriate expressions for the disintegration of the measures $\func{F} _\Phi \mu$ and $\func{F} _{\Phi,h} \mu$. To achieve this, we also consider the transfer operator of $F$, defined by the standard expression $\func F_\ast \mu (A) = \mu(F ^{-1}(A))$ for any measure $\mu$ and any measurable set $A$.

In what follows, the potentials (defined on $\Sigma$) considered belong to the class 

\begin{equation}\label{PPP}  
\mathscr{P}_\Sigma = \{ \Phi: \Phi := \varphi \circ \pi _1, \varphi \in \mathscr{P}_M \}. 
\end{equation}

From now on, $\chi _{A}$ stands for the characteristic function of $A$.

\begin{definition}\label{fphi}
Let $\Phi$ be a potential defined on $\Sigma$. We define the linear operator
\[
\func{F}_\Phi : \mathbf{AB}_\nu \longrightarrow \mathbf{AB}_\nu
\]
as follows. For every $\mu \in \mathbf{AB}_\nu$, the marginal on the base is given by
\begin{equation*}
\label{1}
(\func{F}_\Phi \mu)_x := \mathcal{L}_\varphi(\phi_1)\,\nu.
\end{equation*}
For $\gamma \in M$ such that $\mathcal{L}_\varphi(\phi_1)(\gamma) \neq 0$, and for each inverse branch
$\gamma_i \in f^{-1}(\gamma)$ associated with the partition element $P_i$, define the weight
\[
w_i(\gamma) := \phi_1(\gamma_i)\, e^{\varphi(\gamma_i)}.
\]
Then the conditional measure of $\func{F}_\Phi \mu$ on the fibre over $\gamma$ is defined by
\begin{equation*}
\label{2}
(\func{F}_\Phi \mu)_\gamma
=
\frac{1}{\mathcal{L}_\varphi(\phi_1)(\gamma)}
\sum_{i=1}^{\deg(f)}
w_i(\gamma)\,
\func{F}_* \mu_{\gamma_i}\,
\chi_{f(P_i)}(\gamma).
\end{equation*}
If $\mathcal{L}_\varphi(\phi_1)(\gamma)=0$, we define $(\func{F}_\Phi \mu)_\gamma$ to be the Lebesgue measure on the fibre over $\gamma$ (any fixed reference probability measure could be chosen instead).
\end{definition}

\begin{corollary}\label{fff}
For every $\mu \in \mathbf{AB}_\nu$ and $\mu_x$-a.e. $\gamma \in M$ it holds	$$(\func{F} _\Phi \mu)|_\gamma= \sum _{i=1}^{\deg(f)}{\func {F}_{\gamma_i*}\mu|_{\gamma_i}e^{\varphi(\gamma_i)}}.$$
\end{corollary}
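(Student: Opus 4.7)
The plan is to unwind Definition \ref{restrictionmeasure} applied to $\func{F}_\Phi\mu$, substitute the explicit formulas from Definition \ref{fphi}, and then recognize the result via a natural commutation between $\func{F}_*$ and the projections $\pi_{\gamma,2}$. Concretely, let $\phi_1$ denote the marginal density of $\mu$ with respect to $\nu$. By Equation (\ref{1}) the marginal density of $\func{F}_\Phi\mu$ is $\mathcal{L}_\varphi(\phi_1)$, so by Definition \ref{restrictionmeasure}
$$(\func{F}_\Phi\mu)|_\gamma \;=\; \pi_{\gamma,2*}\!\Bigl(\mathcal{L}_\varphi(\phi_1)(\gamma)\cdot(\func{F}_\Phi\mu)_\gamma\Bigr).$$
On the set where $\mathcal{L}_\varphi(\phi_1)(\gamma)\neq 0$ I would substitute Equation (\ref{2}); the two factors $\mathcal{L}_\varphi(\phi_1)(\gamma)$ cancel, yielding
$$(\func{F}_\Phi\mu)|_\gamma \;=\; \sum_{i=1}^{\deg(f)} \phi_1(\gamma_i)\,e^{\varphi(\gamma_i)}\,\chi_{f(P_i)}(\gamma)\,\pi_{\gamma,2*}(\func{F}_*\mu_{\gamma_i}).$$
On the complementary set $\{\mathcal{L}_\varphi(\phi_1)=0\}$ the left-hand side vanishes; moreover, since $\mathcal{L}_\varphi(\phi_1)(\gamma)=\sum_i \phi_1(\gamma_i)e^{\varphi(\gamma_i)}$ with nonnegative summands (by (P1.1), $\phi_1\geq 0$ in the positive-measure case, and one reduces the signed case by Jordan decomposition, appealing to Remark \ref{ghtyhh}), each $\phi_1(\gamma_i)$ vanishes $\nu$-a.e.\ there, so the stated identity holds trivially on that null set.

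Next I would prove the key pushforward identity: for every $\gamma_i$ lying over $\gamma$ under $f$,
$$\pi_{\gamma,2*}(\func{F}_*\mu_{\gamma_i}) \;=\; \func{F}_{\gamma_i *}(\pi_{\gamma_i,2*}\mu_{\gamma_i}).$$
This is a direct check on a measurable $B\subset K$: using Definition (\ref{ritiruwt}) and the fact that $\pi_{\gamma,2}^{-1}(B)=\{x\}\times B$ when $\gamma=\pi_1^{-1}(x)$, both sides equal $\mu_{\gamma_i}(\{(x_i,y):G(x_i,y)\in B\})$. Combining this with Definition \ref{restrictionmeasure}, namely $\mu|_{\gamma_i}=\pi_{\gamma_i,2*}(\phi_1(\gamma_i)\mu_{\gamma_i})$, gives
$$\phi_1(\gamma_i)\,\pi_{\gamma,2*}(\func{F}_*\mu_{\gamma_i})\;=\;\func{F}_{\gamma_i *}\mu|_{\gamma_i}.$$

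To finish, I would invoke hypothesis (P2): each $f_i=f|_{P_i}$ is a bijection onto $M$, so for $\nu$-a.e.\ $\gamma\in M$ the indicator $\chi_{f(P_i)}(\gamma)$ equals $1$ for every $i=1,\dots,\deg(f)$. Inserting the identity of the preceding display into the earlier displayed sum therefore produces
$$(\func{F}_\Phi\mu)|_\gamma \;=\; \sum_{i=1}^{\deg(f)} e^{\varphi(\gamma_i)}\,\func{F}_{\gamma_i *}\mu|_{\gamma_i},$$
which is exactly the claim, for $\mu_x$-a.e.\ $\gamma$. I do not anticipate a substantive obstacle: the argument is essentially bookkeeping with Definitions \ref{restrictionmeasure} and \ref{fphi}. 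The only care needed is the null-set accounting (the points where $\mathcal{L}_\varphi(\phi_1)$ vanishes, where some $\chi_{f(P_i)}$ equals zero, or where the Jordan decomposition of $\mu$ plays a role), which can be absorbed into the ``$\mu_x$-almost every'' qualifier using Remark \ref{ghtyhh}.
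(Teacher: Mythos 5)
Your proof is correct, and it is essentially the argument the paper has in mind: the paper states this corollary without proof, treating it as an immediate unwinding of Definition \ref{fphi} together with Definition \ref{restrictionmeasure}. Your chain of reasoning — apply $\mu|_\gamma=\pi_{\gamma,2*}(\phi_1(\gamma)\mu_\gamma)$ to $\func{F}_\Phi\mu$ with marginal density $\mathcal{L}_\varphi(\phi_1)$, substitute Equation (\ref{2}) so that the normalizing factor cancels, commute $\pi_{\gamma,2*}$ with $\func{F}_*$ via the identity $\pi_{\gamma,2*}(\func{F}_*\mu_{\gamma_i})=\func{F}_{\gamma_i*}(\pi_{\gamma_i,2*}\mu_{\gamma_i})$, and observe that (P2) makes each $\chi_{f(P_i)}\equiv 1$ — is exactly the bookkeeping one needs, and your handling of the degenerate set $\{\mathcal{L}_\varphi(\phi_1)=0\}$ and of the signed case through Jordan decomposition and Remark \ref{ghtyhh} closes the minor measure-theoretic gaps that the paper silently absorbs into the ``$\mu_x$-a.e.'' qualifier.
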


It is straightforward to prove that $\func{F} _\Phi$ is a well defined linear operator. Its proof is a consequence of the linearity of the restriction cited in Remark \ref{ghtyhh}. Thus, we skip it.

\begin{proposition}\label{rqtwytrrwewe}
Let $m = h \nu$ be such that $\mathcal{L}_\varphi(h)=\lambda h$. If $\mu_0$ is an $F$-invariant measure with $\pi_{1*}\mu_0 = m$, then $\mu_0$ is an eigenvector for $\func{\overline{F}}_\Phi$ with eigenvalue $\lambda$.
\end{proposition}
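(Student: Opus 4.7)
The plan is to verify the eigenvalue equation $\func{F}_\Phi\mu_0 = \lambda\mu_0$ by matching both the marginals (with respect to $\nu$) and the $\nu$-a.e.\ fiber restrictions of the two signed measures, via Rokhlin's Theorem~\ref{rok}. Since $\pi_{1*}\mu_0 = m = h\nu$, the marginal density of $\mu_0$ with respect to $\nu$ is $\phi_1 = h$, so Definition~\ref{fphi} gives the marginal of $\func{F}_\Phi\mu_0$ as $\mathcal{L}_\varphi(h)\,\nu = \lambda h\,\nu = \lambda m$, which coincides with the marginal of $\lambda\mu_0$. This first step is immediate from the eigenvector hypothesis $\mathcal{L}_\varphi h = \lambda h$.

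The heart of the proof is the fiber identity. By Corollary~\ref{fff},
\begin{equation*}
(\func{F}_\Phi \mu_0)|_\gamma = \sum_{i=1}^{\deg(f)} e^{\varphi(\gamma_i)}\, F_{\gamma_i *}\mu_0|_{\gamma_i}, \qquad \gamma_i := f_i^{-1}(\gamma),
\end{equation*}
so after the marginal check it suffices to show that this sum equals $\lambda\,\mu_0|_\gamma$ for $\nu$-a.e.\ $\gamma$. I would derive this by computing $\mu_0(F^{-1}(E \times A))$ in two ways for arbitrary measurable $E \subset M$ and $A \subset K$. On one hand, $F$-invariance and Rokhlin give $\mu_0(F^{-1}(E\times A)) = \mu_0(E\times A) = \int_E \mu_0|_\gamma(A)\,d\nu(\gamma)$, using $\mu_0|_\gamma = h(\gamma)\pi_{\gamma,2*}(\mu_0)_\gamma$ and $dm = h\,d\nu$. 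On the other hand, unwrapping the skew product $F = (f,G)$ with $F_{\gamma_x}(y) = G(x,y)$ produces
\begin{equation*}
\mu_0(F^{-1}(E\times A)) = \int_{f^{-1}(E)} F_{\gamma_x *}\mu_0|_{\gamma_x}(A)\, d\nu(x).
\end{equation*}
Splitting $f^{-1}(E) = \bigsqcup_i (f_i^{-1}(E) \cap P_i)$ via (P2) and changing variables $\gamma = f_i(x)$ under each branch, using the conformality $\mathcal{L}_\varphi^* \nu = \lambda \nu$ which yields $J_\nu f_i = \lambda e^{-\varphi}$, converts this integral into $\int_E \lambda^{-1}\sum_i e^{\varphi(\gamma_i)} F_{\gamma_i *}\mu_0|_{\gamma_i}(A)\,d\nu(\gamma)$. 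Equating both expressions and using the arbitrariness of $E$ and $A$ delivers the desired fiber identity.

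The main obstacle is the conformal change of variables, which reverses the forward action of $F$ over the branches of $f$ and is precisely what produces the factor $\lambda^{-1}e^{\varphi(\gamma_i)}$ needed to match Corollary~\ref{fff}; once this is carried out the verification is mechanical. Linearity of the restriction operation from Remark~\ref{ghtyhh} handles the signed Jordan decomposition automatically, and uniqueness of disintegration from Theorem~\ref{rok}(d) is invoked only in its standard form to pass from the integrated identity to the pointwise one.
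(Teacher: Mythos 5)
Your proposal is correct and is essentially the paper's argument read in the dual direction: the paper tests $\func{F}_\Phi\mu_0$ against a continuous observable $g$, expands via Definition \ref{fphi}, and performs the same branch-wise change of variables with Jacobian $\lambda e^{-\varphi}$ before invoking $F$-invariance of $\mu_0$ at the last step, whereas you test against indicators of product sets $E\times A$ and invoke invariance at the start. The key ingredients — Corollary \ref{fff}, the conformality $\mathcal{L}_\varphi^*\nu=\lambda\nu$ producing the factor $\lambda^{-1}e^{\varphi(\gamma_i)}$, and $F$-invariance — coincide, so no substantive difference.
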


\begin{proof}

Consider the measurable partition ${P_1,\ldots,P_{\deg(f)}}$ of $M$ provided by (P2). Recall that $\mathcal{L}_\varphi(h)=\lambda h$ and that $\nu$ is a conformal measure with Jacobian $\lambda e^{-\varphi}$. Let $g:\Sigma \longrightarrow \mathbb{R}$ be a continuous function and set $s(\gamma):=\int _{\Sigma} g\circ F\,d\mu_{0,\gamma}$. Using the definition of $\func{\overline{F}}_\Phi$ (Definition~\ref{fphi}), the disintegration of $\mu_0$ and $m=h\nu$, we obtain
\begin{align*}
\int g\,d\func{\overline{F}}_\Phi\mu_0
&= \int_M \int_{\Sigma} g\,d(\func{\overline{F}}_\Phi\mu_0)_\gamma \,d(\func{\overline{F}}_\Phi\mu_0)_x(\gamma)\\
&= \int_M \frac{1}{\mathcal{L}_\varphi(h)(\gamma)}
   \sum_{i=1}^{\deg(f)} h(\gamma_i)e^{\varphi(\gamma_i)}
   \int _{\Sigma} g\,d(\func{F}_*\mu_{0,\gamma_i})\,d\mathcal{L}_\varphi(h)\nu(\gamma)\\
&= \int_M \sum_{i=1}^{\deg(f)} h(\gamma_i)e^{\varphi(\gamma_i)}
   \int _{\Sigma} g\circ F\,d\mu_{0,\gamma_i}\,d\nu(\gamma)\\
&= \sum_{i=1}^{\deg(f)}\int_{f(P_i)} h(\gamma_i)e^{\varphi(\gamma_i)} s(\gamma_i)\,d\nu(\gamma)\\
&= \sum_{i=1}^{\deg(f)}\int_{P_i} \mathcal{L}_\varphi(h)(\gamma) s(\gamma)\,d\nu(\gamma)
   \;=\; \lambda\int_M h(\gamma) s(\gamma)\,d\nu(\gamma)\\
&= \lambda\int_M \int _{\Sigma} g\circ F\,d\mu_{0,\gamma}\,dm(\gamma)
   \;=\; \lambda\int_\Sigma g\circ F\,d\mu_0
   \;=\; \lambda\int_\Sigma g\,d\mu_0.
\end{align*}
This shows $\int _{\Sigma} g\,d\func{\overline{F}}_\Phi\mu_0=\int _{\Sigma} g\,d(\lambda\mu_0)$ for every continuous function $g$, hence $\func{\overline{F}}_\Phi\mu_0=\lambda\mu_0$, as claimed.
\end{proof}

The reciprocal is also true. We skip the proof, since it is straightforward.

\begin{proposition}\label{khgjgh}
	If $\mu_0 \in \mathbf{AB}_\nu$ is an eigenvector of $\func {F}_\Phi$ with eigenvalue $\lambda$, then $\mathcal{L}_\varphi (\phi_1)=\lambda \phi_1$.
\end{proposition}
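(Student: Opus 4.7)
The plan is to extract the statement from the defining formula for the $x$-marginal of $\func{F}_\Phi \mu_0$, using uniqueness in Rokhlin's disintegration. By part (1) of Definition \ref{fphi}, the quotient measure of $\func{F}_\Phi \mu_0$ along the partition $\mathcal{F}$ is
\begin{equation*}
(\func{F}_\Phi \mu_0)_x = \mathcal{L}_\varphi(\phi_1)\,\nu.
\end{equation*}
On the other hand, since $\mu_0 \in \mathcal{AB}_\nu$, its disintegration $(\{\mu_{0,\gamma}\},\mu_{0,x})$ identifies the quotient measure with $\phi_1\,\nu$, and by linearity the quotient measure of $\lambda \mu_0$ is $\lambda \phi_1\,\nu$.

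Next, I would invoke the hypothesis $\func{F}_\Phi \mu_0 = \lambda \mu_0$ to equate the quotient measures of both sides. Since the partition $\mathcal{F}$ is indexed by $M$ via $\pi_1$, this yields, as measures on $M$,
\begin{equation*}
\mathcal{L}_\varphi(\phi_1)\,\nu \;=\; \lambda \phi_1\,\nu.
\end{equation*}
Both sides are absolutely continuous with respect to $\nu$ with the displayed densities, so by uniqueness of Radon--Nikodym derivatives we conclude $\mathcal{L}_\varphi(\phi_1) = \lambda \phi_1$, $\nu$-almost everywhere.

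The only point that requires a little care is the identification of the quotient measure on $M$ (versus on the abstract quotient space $\Gamma$) via the projection $\pi_1$; this is routine since, as noted in the paragraph preceding Definition \ref{l1}, the quotient measure of a measure in $\mathcal{AB}_\nu$ is identified with a marginal density on $M$. With that identification in place, the proof is essentially one line from Definition \ref{fphi}, so no serious obstacle is expected.
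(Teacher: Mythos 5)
Your argument is correct and is precisely the straightforward computation the paper has in mind when it omits the proof: equating the marginal $(\func{F}_\Phi\mu_0)_x=\mathcal{L}_\varphi(\phi_1)\nu$ from Definition \ref{fphi} with the marginal $\lambda\phi_1\nu$ of $\lambda\mu_0$, and then invoking uniqueness of Radon--Nikodym derivatives. No gap; the identification of the quotient measure with $\pi_{1*}$ via $\pi_1$ is exactly the routine point you flag, and it is handled by the paper's conventions preceding Definition \ref{l1}.
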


By Propositions \ref{rqtwytrrwewe} and \ref{khgjgh} we get the following result.
\begin{corollary}
	The measure $\mu_0 \in \mathbf{AB}_\nu$ is a maximal eigenvector for $\func {F_\Phi}$ if and only if $\phi_1$ is a maximal eigenvector for $\mathcal{L}_\varphi$. 
\end{corollary}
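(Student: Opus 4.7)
The plan is to derive the corollary as an essentially immediate consequence of Propositions \ref{rqtwytrrwewe} and \ref{khgjgh}, together with hypothesis (P3) and the spectral information in (P1.2). The two propositions give an almost perfect translation between eigenvectors of $\func{F}_\Phi$ on $\mathcal{AB}_\nu$ and eigenvectors of $\mathcal{L}_\varphi$ on $B_s$; the only real content is to verify that the word \emph{maximal} matches on both sides.

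For the implication ($\Leftarrow$), suppose $\phi_1$ is a maximal eigenvector of $\mathcal{L}_\varphi$. By the spectral decomposition in (P1.2), the corresponding eigenvalue is $\lambda$ and $\phi_1$ is a scalar multiple of $h$ (since $\dim\im(\func{P}_f)=1$), so $\phi_1\nu$ is a scalar multiple of $m=h\nu$. Hypothesis (P3) supplies an $F$-invariant measure $\mu_0$ with $\pi_{1\ast}\mu_0=m$, and Proposition \ref{rqtwytrrwewe} gives $\func{F}_\Phi\mu_0=\lambda\mu_0$. To confirm maximality on the measure side, suppose some $\mu\in\mathcal{AB}_\nu$ were an eigenvector with eigenvalue $\lambda'$ of larger modulus; then Proposition \ref{khgjgh} would produce a corresponding eigenvector of $\mathcal{L}_\varphi$ with eigenvalue $\lambda'$, contradicting the location of the spectrum in (P1.2).

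For ($\Rightarrow$), suppose $\mu_0\in\mathcal{AB}_\nu$ is a maximal eigenvector of $\func{F}_\Phi$, with eigenvalue $\lambda_0$. Proposition \ref{khgjgh} gives immediately $\mathcal{L}_\varphi\phi_1=\lambda_0\phi_1$, so $\lambda_0$ belongs to the spectrum of $\mathcal{L}_\varphi$. By the previous paragraph, $\lambda$ is itself an eigenvalue of $\func{F}_\Phi$, so maximality of $\mu_0$ forces $|\lambda_0|\geq\lambda$, while (P1.2) forces $|\lambda_0|\leq\lambda$; hence $\lambda_0=\lambda$ and, again by (P1.2), $\phi_1$ is (a multiple of $h$ and therefore) a maximal eigenvector of $\mathcal{L}_\varphi$.

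The step most in need of care is the comparison of spectral radii on the two sides: one has to ensure that every eigenvalue of $\func{F}_\Phi$ on $\mathcal{AB}_\nu$ shows up as an eigenvalue of $\mathcal{L}_\varphi$ on $B_s$ (so that the bound on the spectrum of $\mathcal{L}_\varphi$ coming from (P1.2) transfers), and conversely that the leading eigenvalue $\lambda$ of $\mathcal{L}_\varphi$ is realised by some eigenvector of $\func{F}_\Phi$. The first is exactly Proposition \ref{khgjgh}, and the second is exactly Proposition \ref{rqtwytrrwewe} applied to the $F$-invariant measure furnished by (P3), so no further work is required.
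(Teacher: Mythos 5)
Your $(\Rightarrow)$ direction is fine and uses exactly the right ingredients: Proposition \ref{khgjgh} transfers the eigenvector relation from $\func{F}_\Phi$ to $\mathcal{L}_\varphi$, Proposition \ref{rqtwytrrwewe} together with (P3) realises $\lambda$ as an eigenvalue of $\func{F}_\Phi$, and (P1.2) forces any eigenvalue of $\mathcal{L}_\varphi$ with maximal modulus to equal $\lambda$. The paper offers no proof beyond the single sentence pointing at the two propositions, so your route for $(\Rightarrow)$ is the intended one.

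The $(\Leftarrow)$ direction as you wrote it has a genuine gap: you replace the $\mu_0$ appearing in the corollary by the measure supplied by (P3). These are not the same object. Knowing that the marginal density $\phi_1$ of the given $\mu_0$ satisfies $\mathcal{L}_\varphi\phi_1=\lambda\phi_1$, i.e.\ that $\phi_1$ is a scalar multiple of $h$, pins down only $\pi_{1\ast}\mu_0$; it says nothing about the fibrewise disintegration $\{\mu_{0,\gamma}\}$, which can be arbitrary. Hence from $\phi_1=ch$ alone one cannot deduce $\func{F}_\Phi\mu_0=\lambda\mu_0$, and Proposition \ref{rqtwytrrwewe} is not applicable unless $\mu_0$ is already known to be $F$-invariant with marginal $m$. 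What you actually prove in that paragraph is the (true, and needed) auxiliary fact that $\lambda$ is an eigenvalue of $\func{F}_\Phi$ realised by \emph{some} element of $\mathcal{AB}_\nu$, together with the spectral bound; you do not show the given $\mu_0$ is an eigenvector. To close the gap one must read the corollary as implicitly ranging over measures $\mu_0\in\mathcal{AB}_\nu$ that are already eigenvectors of $\func{F}_\Phi$, say with eigenvalue $\lambda_0$. Then Proposition \ref{khgjgh} yields $\mathcal{L}_\varphi\phi_1=\lambda_0\phi_1$, while the hypothesis gives $\mathcal{L}_\varphi\phi_1=\lambda\phi_1$ with $\phi_1\neq0$, so $\lambda_0=\lambda$; combined with your spectral-radius observation (that $\lambda$ is the largest modulus eigenvalue of $\func{F}_\Phi$, via (P3), Proposition \ref{rqtwytrrwewe} and (P1.2)), this shows $\mu_0$ is maximal. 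That is the argument the $(\Leftarrow)$ direction actually needs, and it does not require invoking (P3) to manufacture a second measure.
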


Now, we define a new operator, $\func {F_{\Phi,h}}$, associated with $h$ as follows.

\begin{definition}\label{fphih}
Let $\Phi$ be a potential defined on $\Sigma$. We define the linear operator
\[
\func{F}_{\Phi,h} : \mathbf{AB}_m \longrightarrow \mathbf{AB}_m
\]
as follows. For every $\mu \in \mathbf{AB}_m$, the marginal on the base is given by
\begin{equation*}
\label{1}
(\func{F}_{\Phi,h} \mu)_x := \mathcal{L}_{\varphi,h}(\phi_1)\, m.
\end{equation*}
For $\gamma \in M$ such that $\mathcal{L}_{\varphi,h}(\phi_1)(\gamma) \neq 0$, and for each inverse branch
$\gamma_i \in f^{-1}(\gamma)$ associated with the partition element $P_i$, define the weight
\[
w_i^{(h)}(\gamma) := \phi_1(\gamma_i)\, h(\gamma_i)\, e^{\varphi(\gamma_i)}.
\]
Then the conditional measure of $\func{F}_{\Phi,h} \mu$ on the fibre over $\gamma$ is defined by
\begin{equation*}
\label{2}
(\func{F}_{\Phi,h} \mu)_\gamma
=
\frac{1}{h(\gamma)\, \mathcal{L}_{\varphi,h}(\phi_1)(\gamma)}
\sum_{i=1}^{\deg(f)}
w_i^{(h)}(\gamma)\,
\func{F}_* \mu_{\gamma_i}\,
\chi_{f(P_i)}(\gamma).
\end{equation*}
If $\mathcal{L}_{\varphi,h}(\phi_1)(\gamma)=0$, we define $(\func{F}_{\Phi,h} \mu)_\gamma$ to be the Lebesgue measure on the fibre over $\gamma$ (any fixed reference probability measure could be chosen instead).
\end{definition}

\begin{corollary}\label{fff}
	For every $\mu \in \mathbf{AB}_m$ and $\mu_x$-a.e. $\gamma \in M$, it holds	$$(\func{F} _{\Phi,h} \mu)|_\gamma= \frac{1}{h(\gamma)}\sum _{i=1}^{\deg(f)}{\func {F}_{\gamma_i*}\mu|_{\gamma_i}h(\gamma_i)e^{\varphi(\gamma_i)}}.$$
\end{corollary}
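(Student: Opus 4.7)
The plan is to unfold the definition of the fiber restriction and substitute the formula from Definition~\ref{fphih}. By Definition~\ref{restrictionmeasure}, if a positive measure $\eta\in\mathcal{AB}_m$ has disintegration $(\{\eta_\gamma\},\psi\, m)$, then $\eta|_\gamma=\pi_{\gamma,2*}(\psi(\gamma)\eta_\gamma)$. Applied to $\func{F}_{\Phi,h}\mu$, whose marginal density with respect to $m$ is $\mathcal{L}_{\varphi,h}(\phi_1)$ by Definition~\ref{fphih}, this reads on the set where the density does not vanish
\[
(\func{F}_{\Phi,h}\mu)|_\gamma = \pi_{\gamma,2*}\bigl(\mathcal{L}_{\varphi,h}(\phi_1)(\gamma)\,(\func{F}_{\Phi,h}\mu)_\gamma\bigr).
\]
Plugging in the explicit expression for $(\func{F}_{\Phi,h}\mu)_\gamma$ from Definition~\ref{fphih} makes the factor $\mathcal{L}_{\varphi,h}(\phi_1)(\gamma)$ cancel, and I am left with
\[
(\func{F}_{\Phi,h}\mu)|_\gamma = \frac{1}{h(\gamma)}\sum_{i=1}^{\deg(f)}\phi_1(\gamma_i)\,h(\gamma_i)\,e^{\varphi(\gamma_i)}\,\pi_{\gamma,2*}(\func{F}_{*}\mu_{\gamma_i})\,\chi_{f(P_i)}(\gamma).
\]

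Next I would translate the global pushforward $\pi_{\gamma,2*}\circ \func{F}_{*}$ into the fiber operator $\func{F}_{\gamma_i *}$. Because $\mu_{\gamma_i}$ is concentrated on $\gamma_i$ and $F$ sends $\gamma_i$ into $\gamma$ whenever $\chi_{f(P_i)}(\gamma)=1$, the definition \eqref{ritiruwt} yields the intertwining $\pi_{\gamma,2}\circ F|_{\gamma_i}=F_{\gamma_i}\circ \pi_{\gamma_i,2}$; functoriality of pushforwards then gives
\[
\pi_{\gamma,2*}(\func{F}_{*}\mu_{\gamma_i}) = \func{F}_{\gamma_i *}\bigl(\pi_{\gamma_i,2*}\mu_{\gamma_i}\bigr).
\]
Absorbing the scalar $\phi_1(\gamma_i)$ via the linearity of $\func{F}_{\gamma_i*}$ and invoking $\mu|_{\gamma_i}=\pi_{\gamma_i,2*}(\phi_1(\gamma_i)\mu_{\gamma_i})$ from Definition~\ref{restrictionmeasure}, the claimed identity emerges for positive $\mu$.

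For a general signed $\mu\in \mathcal{AB}_m$ I would reduce to the positive case via the Jordan decomposition $\mu=\mu^+-\mu^-$, using the linearity of the restriction stated in Remark~\ref{ghtyhh} together with the linearity of $\func{F}_{\Phi,h}$. The edge case $\mathcal{L}_{\varphi,h}(\phi_1)(\gamma)=0$ is harmless: the right-hand side of the formula is proportional to $\sum_i \phi_1(\gamma_i)h(\gamma_i)e^{\varphi(\gamma_i)}=h(\gamma)\mathcal{L}_{\varphi,h}(\phi_1)(\gamma)$, which vanishes simultaneously, while the left-hand side equals $\pi_{\gamma,2*}\bigl(0\cdot(\func{F}_{\Phi,h}\mu)_\gamma\bigr)=0$, so both sides agree $\mu_x$-a.e. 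The argument parallels that of the earlier corollary concerning $\func{F}_\Phi$; the only additional bookkeeping is the conjugation factor $h(\gamma_i)/h(\gamma)$ coming from the identity $\mathcal{L}_{\varphi,h}(g)=\mathcal{L}_\varphi(gh)/h$, and I do not foresee a substantive obstacle.
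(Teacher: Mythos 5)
Your proof is correct and follows the route the paper intends: the corollary is stated without proof because it is exactly the unfolding of Definition \ref{fphih} via Definition \ref{restrictionmeasure}, with the normalizing factor $\mathcal{L}_{\varphi,h}(\phi_1)(\gamma)$ cancelling and the intertwining $\pi_{\gamma,2}\circ F|_{\gamma_i}=F_{\gamma_i}\circ\pi_{\gamma_i,2}$ converting $\func{F}_*$ into $\func{F}_{\gamma_i*}$. Your handling of the signed case by Jordan decomposition and of the degenerate leaves where the density vanishes is exactly the bookkeeping the paper leaves implicit.
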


It is straightforward to prove that $\func{F} _{\Phi,h}$ is a well defined linear operator. Its proof is a consequence of the linearity of the restriction cited in Remark \ref{ghtyhh}. Thus, we skip it.

\subsubsection{Properties of the norms, actions of $\func {\overline{F}}_\Phi$ and $\func {\overline{F}}_{\Phi,h}$}

\begin{proposition}\label{uuytu}
The operator $\func{F}_\Phi:\mathbf{L}^{1}\longrightarrow \mathbf{L}^{1}$ is bounded.
Moreover, there exists $\lambda>0$ such that
\[
\|\func{F}_\Phi \mu\|_1 \leq \lambda \|\mu\|_1,
\quad \forall\, \mu \in \mathbf{L}^{1}.
\]
\end{proposition}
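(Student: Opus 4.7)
The plan is a direct computation that unwinds the fiberwise definition of $\func{F}_\Phi\mu$ and then reduces the integral to one involving the transfer operator $\mathcal{L}_\varphi$, whose invariance $\mathcal{L}_\varphi^\ast \nu = \lambda\nu$ (given by (P1.1)) produces the factor $\lambda$.

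First I would observe that for $\mu \in \mathcal{L}^1$ the new measure $\func{F}_\Phi\mu$ is still in $\mathcal{AB}_\nu$: by definition $(\func{F}_\Phi\mu)_x = \mathcal{L}_\varphi(\phi_1)\,\nu$, so both marginals of its Jordan decomposition are absolutely continuous with respect to $\nu$. Thus it is legitimate to speak of $\|\func{F}_\Phi\mu\|_1 = \int \|(\func{F}_\Phi\mu)|_\gamma\|_o\,d\nu(\gamma)$, and one only needs to show this integral is finite and bounded by $\lambda\|\mu\|_1$.

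Next I would use Corollary~\ref{fff}, which gives, for $\nu$-a.e.\ $\gamma \in M$,
\begin{equation*}
(\func{F}_\Phi\mu)|_\gamma \;=\; \sum_{i=1}^{\deg(f)} e^{\varphi(\gamma_i)}\,\func{F}_{\gamma_i\ast}\mu|_{\gamma_i},
\end{equation*}
where $\gamma_i = (f|_{P_i})^{-1}(\gamma)$. Taking $\|\cdot\|_o$, applying the triangle inequality, and then using (G1.2) on each summand, one obtains
\begin{equation*}
\|(\func{F}_\Phi\mu)|_\gamma\|_o \;\le\; \sum_{i=1}^{\deg(f)} e^{\varphi(\gamma_i)}\,\|\func{F}_{\gamma_i\ast}\mu|_{\gamma_i}\|_o \;\le\; \sum_{i=1}^{\deg(f)} e^{\varphi(\gamma_i)}\,\|\mu|_{\gamma_i}\|_o.
\end{equation*}
The right-hand side is exactly $\mathcal{L}_\varphi(g)(\gamma)$ for the nonnegative function $g(\gamma):=\|\mu|_\gamma\|_o$, which by assumption lies in $L^1(\nu)$ with $\int g\,d\nu=\|\mu\|_1$.

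Finally, I would integrate the inequality over $\nu$ and apply the conformality relation $\mathcal{L}_\varphi^\ast\nu=\lambda\nu$ from (P1.2), namely $\int \mathcal{L}_\varphi(g)\,d\nu = \lambda \int g\,d\nu$, to conclude
\begin{equation*}
\|\func{F}_\Phi\mu\|_1 \;\le\; \int \mathcal{L}_\varphi(g)\,d\nu \;=\; \lambda \int g\,d\nu \;=\; \lambda\,\|\mu\|_1,
\end{equation*}
which yields both $\func{F}_\Phi\mu \in \mathcal{L}^1$ and the desired operator bound. The only non-routine point is a measurability check that $\gamma \mapsto \|\mu|_\gamma\|_o$ is $\nu$-measurable so that $\mathcal{L}_\varphi$ may legitimately be applied to it; this should follow from Rokhlin measurability of the disintegration combined with the standing assumptions on $\|\cdot\|_o$, and I do not expect any real obstacle beyond this bookkeeping.
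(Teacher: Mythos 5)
Your proposal is correct and takes essentially the same route as the paper: both apply Corollary \ref{fff}, the triangle inequality, and the weak contraction (\ref{weakkkk}) to reduce matters to $\sum_i e^{\varphi(\gamma_i)}\|\mu|_{\gamma_i}\|_o$. The only stylistic difference is that the paper integrates this term by term via the change of variables with Jacobian $\lambda e^{-\varphi}$, while you recognize the sum as $\mathcal{L}_\varphi(g)$ with $g(\gamma)=\|\mu|_\gamma\|_o$ and invoke $\mathcal{L}_\varphi^*\nu=\lambda\nu$ directly -- these are the same computation.
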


\begin{proof}
Consider the partition of $M$ into measurable and disjoint sets $P_1, \dots, P_{\deg(f)}$ given by (P2). 
Recall that $m = h \, \nu$, where $\mathcal{L}_\varphi(h) = \lambda h$, and that the Jacobian of $\nu$ is given by $\lambda e^{-\varphi}$. 
By Corollary \ref{fff}, Equation (\ref{weakkkk}), and the triangle inequality, we have
\begin{align*}
\int_M \| (\func{F}_\Phi \mu)|_\gamma \|_o \, d\nu 
&\le \sum_{i=1}^{\deg(f)} \int_{f(P_i)} e^{\varphi(\gamma_i)} \, \| \func{F}_{\gamma_i *} \mu |_{\gamma_i} \|_o \, d\nu \\
&\le \sum_{i=1}^{\deg(f)} \int_{f(P_i)} e^{\varphi(\gamma_i)} \, \| \mu |_{\gamma_i} \|_o \, d\nu \\
&\le \lambda \sum_{i=1}^{\deg(f)} \int_{P_i} \| \mu |_{\gamma_i} \|_o \, d\nu \\
&= \lambda \int_M \| \mu |_\gamma \|_o \, d\nu \\
&= \lambda \| \mu \|_1.
\end{align*}
This shows that $\func{F}_\Phi: \mathbf{L}^{1} \longrightarrow \mathbf{L}^{1}$ is bounded, with operator norm at most $\lambda$.
\end{proof}

\begin{definition}\label{normalized}
Define the operator $\func {\overline{F}}_{\Phi,h}$, by $$\func {\overline{F}}_{\Phi,h}:=\dfrac{1}{\lambda} \func {F}_{\Phi,h}.$$
\end{definition}

By Proposition \ref{uuytu} we immediately have the following result.

\begin{corollary}\label{urhirkjdfhkdf}
The normalized operator $\func{\overline{F}}_{\Phi,h}:\mathbf{L}^{1}\longrightarrow \mathbf{L}^{1}$ is a weak contraction, that is,
\[
\|\func{\overline{F}}_{\Phi,h}\mu\|_1 \leq \|\mu\|_1,
\quad \forall\, \mu \in \mathbf{L}^{1}.
\]
\end{corollary}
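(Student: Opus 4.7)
By Definition \ref{normalized} we have $\func{\overline{F}}_{\Phi,h} = \frac{1}{\lambda}\func{F}_{\Phi,h}$, so the desired inequality $\|\func{\overline{F}}_{\Phi,h}\mu\|_{1} \le \|\mu\|_{1}$ is equivalent to $\|\func{F}_{\Phi,h}\mu\|_{1} \le \lambda \|\mu\|_{1}$ for every $\mu \in \mathcal{L}^{1}$. The plan is to prove this analog of Proposition \ref{uuytu} by rerunning its proof verbatim, now using the $\func{F}_{\Phi,h}$ version of Corollary \ref{fff}, namely
$$(\func{F}_{\Phi,h}\mu)|_{\gamma} = \frac{1}{h(\gamma)}\sum_{i=1}^{\deg(f)} \func{F}_{\gamma_{i} *}\mu|_{\gamma_{i}}\, h(\gamma_{i}) e^{\varphi(\gamma_{i})} \chi_{f(P_{i})}(\gamma).$$

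I would then apply the triangle inequality in $\|\cdot\|_{o}$ together with the fiber weak-contraction \eqref{weakkkk} from (G1.2) to bound $\|\func{F}_{\gamma_{i}*}\mu|_{\gamma_{i}}\|_{o} \le \|\mu|_{\gamma_{i}}\|_{o}$, and integrate the resulting pointwise estimate against $\nu$ on $M$. Splitting the integral over the atoms $P_{i}$ and changing variables $\gamma = f(\gamma_{i})$ on each one using the Jacobian $\lambda e^{-\varphi}$ of $\nu$ under $f$ (from (P1.1)), the $e^{\varphi(\gamma_{i})}$ factors cancel and a global $\lambda$ is produced. The leftover factor $h(\gamma_{i})/h(f(\gamma_{i}))$ is exactly what reconciles the $m$-disintegration used implicitly by $\func{F}_{\Phi,h}$ with the $\nu$-disintegration built into $\|\cdot\|_{1}$, so summing and integrating collapses everything to $\lambda \int_{M} \|\mu|_{\gamma}\|_{o}\, d\nu = \lambda \|\mu\|_{1}$. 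Dividing through by $\lambda$ gives the weak contraction.

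The main delicate point is the bookkeeping between the two disintegrations: the operator $\func{F}_{\Phi,h}$ is naturally defined relative to the $m$-disintegration (since it acts on $\mathcal{AB}_{m}$), whereas the norm $\|\cdot\|_{1}$ is defined via the $\nu$-disintegration. The identity $m = h\nu$ combined with $\mathcal{L}_{\varphi}(h) = \lambda h$ from (P1.1), together with the formula for $\mathcal{L}_{\varphi,h}$ recalled in Remark \ref{yturhfvb}, makes the $h$-factors telescope, so after this bookkeeping the computation is essentially a verbatim copy of the one already done for $\func{F}_{\Phi}$ in Proposition \ref{uuytu}, and no genuinely new estimate is needed.
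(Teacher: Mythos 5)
Your proof is correct and amounts to the same computation the paper invokes: rerunning the integral estimate of Proposition \ref{uuytu} for $\func{F}_{\Phi,h}$, using Corollary \ref{fff}, the fiber weak-contraction (\ref{weakkkk}), the change of variables with Jacobian $\lambda e^{-\varphi}$, and the relation $m=h\nu$ to absorb the $h(\gamma_i)/h(\gamma)$ factors. One small remark: the paper's terse phrasing (``by Proposition \ref{uuytu} we immediately have'') is actually justified without any new computation, because $\func{F}_{\Phi,h}$ and $\func{F}_\Phi$ coincide as operators on signed measures --- they have the same marginal $\mathcal{L}_\varphi(h\phi_1^m)\nu=\mathcal{L}_{\varphi,h}(\phi_1^m)m$ and the same leafwise restrictions once one accounts for $\mu|_\gamma^\nu=h(\gamma)\mu|_\gamma^m$ --- so the bound $\|\func{F}_{\Phi,h}\mu\|_1\le\lambda\|\mu\|_1$ is literally the same inequality as in Proposition \ref{uuytu}; your hands-on derivation is the transparent way to see this, and nothing in it is wasted.
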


The proof of the following proposition is straightforward, so we omit it. 
\begin{corollary}\label{disintnorm}
	For every $\mu \in \mathbf{AB}_m$, the restriction of the measure $\func {\overline{F}}_{\Phi,h} \mu$ to the leaf $\gamma$, $(\func {\overline{F}}_{\Phi,h} \mu )|_\gamma$, is given by the expression
	$$ (\func {\overline{F}}_{\Phi,h} \mu )|_\gamma = \dfrac{1}{\lambda} (\func{F} _{\Phi,h} \mu)|_\gamma.$$	
\end{corollary}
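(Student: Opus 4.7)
The plan is to reduce the statement to the elementary fact that the leafwise restriction operation, as defined in Definition~\ref{restrictionmeasure}, is homogeneous of degree one under multiplication by positive scalars. Since $\func{\overline{F}}_{\Phi,h} = \tfrac{1}{\lambda}\func{F}_{\Phi,h}$ by Definition~\ref{normalized} and $\lambda>0$, the corollary will then follow by applying this scaling property to the measure $\func{F}_{\Phi,h}\mu$ with constant $c=1/\lambda$.

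First, I would establish the scaling property for positive measures. Given a positive $\sigma \in \mathcal{AB}_m$ with Rokhlin disintegration $(\{\sigma_\gamma\}_\gamma,\sigma_x = \phi_1 \, m)$ along $\mathcal{F}$, and any $c>0$, the measure $c\sigma$ has disintegration $(\{\sigma_\gamma\}_\gamma, c\phi_1\, m)$: the conditional measures $\sigma_\gamma$ remain the same probability measures on the leaves (they are uniquely determined $\sigma_x$-a.e.\ by Theorem~\ref{rok}(d), and only the quotient measure scales). By Definition~\ref{restrictionmeasure},
\begin{equation*}
(c\sigma)|_\gamma \;=\; \pi_{\gamma,2\ast}\bigl(c\phi_1(\gamma)\,\sigma_\gamma\bigr) \;=\; c\,\pi_{\gamma,2\ast}\bigl(\phi_1(\gamma)\,\sigma_\gamma\bigr) \;=\; c\,\sigma|_\gamma.
\end{equation*}

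Next I would extend this to signed measures in $\mathcal{AB}_m$ using the Jordan decomposition. If $\sigma = \sigma^+ - \sigma^-$ and $c>0$, then $(c\sigma)^\pm = c\,\sigma^\pm$, and by the linearity of the restriction stated in Remark~\ref{ghtyhh},
\begin{equation*}
(c\sigma)|_\gamma \;=\; (c\sigma^+)|_\gamma - (c\sigma^-)|_\gamma \;=\; c\,\sigma^+|_\gamma - c\,\sigma^-|_\gamma \;=\; c\,\sigma|_\gamma.
\end{equation*}
Applying this identity with $c=1/\lambda$ and $\sigma = \func{F}_{\Phi,h}\mu$ yields $(\func{\overline{F}}_{\Phi,h}\mu)|_\gamma = \tfrac{1}{\lambda}(\func{F}_{\Phi,h}\mu)|_\gamma$, as desired.

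There is essentially no obstacle here beyond careful bookkeeping; the only point one must be attentive to is that under scaling of a measure by a positive constant, the conditional probability measures in the Rokhlin disintegration are unchanged (they remain probabilities), so the entire scaling factor is absorbed into the marginal density $\phi_1$, which is then carried through the pushforward $\pi_{\gamma,2\ast}$ in the definition of the restriction.
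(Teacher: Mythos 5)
Your proof is correct, and since the paper explicitly omits this proof as straightforward, your argument simply supplies the routine details: positive homogeneity of the leafwise restriction (the scalar is absorbed into the marginal density $\phi_1$ while the conditional probabilities are unchanged), extended to signed measures via the Jordan decomposition and Remark~\ref{ghtyhh}. This is exactly the intended reasoning.
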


\begin{proposition}\label{kdjfjksdkfhjsdfk}
	The operator $\func {\overline{F}}_{\Phi,h}: \mathbf{L}^{\infty} \longrightarrow \mathbf{L}^{\infty}$ is a weak contraction. It holds, $||\func {\overline{F}}_{\Phi,h} \mu ||_\infty \leq ||\mu||_\infty$, for all $\mu \in \mathbf{L}^{\infty}$.
\end{proposition}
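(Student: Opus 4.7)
The plan is to reduce the estimate on the leaf restrictions to condition (G1.2), namely Equation (\ref{weakkkk}), and then use the eigenfunction identity $\mathcal{L}_\varphi(h)=\lambda h$ from (P1) to cancel the factor coming from the normalization.

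First, I would apply the Corollary immediately following Definition \ref{fphih} together with Definition \ref{normalized} to write, for $m$-a.e.\ $\gamma \in M$, the explicit expression
\begin{equation*}
(\func {\overline{F}}_{\Phi,h}\mu)|_\gamma
= \frac{1}{\lambda\,h(\gamma)}\sum_{i=1}^{\deg(f)} h(\gamma_i)\,e^{\varphi(\gamma_i)}\,\func{F}_{\gamma_i *}\mu|_{\gamma_i},
\end{equation*}
where $\gamma_1,\dots,\gamma_{\deg(f)}$ are the preimages of $\gamma$ under $f$, one in each piece $P_i$. Using Remark \ref{ghtyhh} (linearity of the restriction under Jordan decomposition), this identity is valid for signed $\mu \in \mathcal{AB}_m$ and one has $\|(\func {\overline{F}}_{\Phi,h}\mu)|_\gamma\|_o = \|(\func {\overline{F}}_{\Phi,h}\mu)^+|_\gamma - (\func {\overline{F}}_{\Phi,h}\mu)^-|_\gamma\|_o$, which is exactly the quantity whose essential supremum defines $\|\cdot\|_\infty$.

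Next, I would apply the triangle inequality for $\|\cdot\|_o$ to the above sum, then invoke (G1.2), Equation (\ref{weakkkk}), to dominate each term by $h(\gamma_i)e^{\varphi(\gamma_i)}\,\|\mu|_{\gamma_i}\|_o$. For $m$-a.e.\ $\gamma$, each preimage $\gamma_i$ lies in the full-$m$-measure set where $\|\mu|_{\gamma_i}\|_o \leq \|\mu\|_\infty$ (this follows from the fact that $f$ is non-singular with respect to $m$, since the pushforward of an $m$-null set of leaves under the branches $f_i^{-1}$ is $m$-null). Combining these two steps yields
\begin{equation*}
\|(\func {\overline{F}}_{\Phi,h}\mu)|_\gamma\|_o
\;\leq\; \frac{\|\mu\|_\infty}{\lambda\,h(\gamma)} \sum_{i=1}^{\deg(f)} h(\gamma_i)\,e^{\varphi(\gamma_i)}
\;=\; \frac{\|\mu\|_\infty}{\lambda\,h(\gamma)} \cdot \mathcal{L}_\varphi(h)(\gamma).
\end{equation*}
Finally, I would use $\mathcal{L}_\varphi(h) = \lambda h$ from (P1.1) to replace $\mathcal{L}_\varphi(h)(\gamma)$ by $\lambda h(\gamma)$, causing all the weights to cancel and leaving $\|(\func {\overline{F}}_{\Phi,h}\mu)|_\gamma\|_o \leq \|\mu\|_\infty$ for $m$-a.e.\ $\gamma$. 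Taking the essential supremum over $\gamma \in M$ gives the desired bound $\|\func {\overline{F}}_{\Phi,h}\mu\|_\infty \leq \|\mu\|_\infty$.

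There is no real obstacle here; the only small subtlety is ensuring that the a.e.\ inequality $\|\mu|_{\gamma_i}\|_o \leq \|\mu\|_\infty$ can be simultaneously applied at all preimages for $m$-a.e.\ $\gamma$, which is handled by the non-singularity of the inverse branches with respect to $m$ (equivalently, with respect to $\nu$, since $m$ and $\nu$ are equivalent by (P1.1)).
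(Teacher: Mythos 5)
Your proof is correct and follows essentially the same route as the paper: apply Corollary \ref{fff} with Definition \ref{normalized}, use the triangle inequality and (G1.2) to bound each term by $h(\gamma_i)e^{\varphi(\gamma_i)}\|\mu\|_\infty$, and cancel the resulting sum against $\lambda h(\gamma)$ via the eigenfunction identity. The paper phrases the final cancellation as $\frac{1}{\lambda}\mathcal{L}_{\varphi,h}(1)=1$ rather than $\mathcal{L}_\varphi(h)=\lambda h$, but these are the same fact; your extra remark on why the a.e.\ bound can be applied simultaneously at all preimages is a harmless point the paper leaves implicit.
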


\begin{proof}
Applying Equation (\ref{weakkkk}), Corollary \ref{fff}, and Remark \ref{yturhfvb}, we obtain
\begin{align*}
\| (\func{\overline{F}}_{\Phi,h} \mu)|_\gamma \|_o 
&\le \frac{1}{h(\gamma)} \sum_{i=1}^{\deg(f)} \left\| \frac{1}{\lambda} \func{F}_{\gamma_i *} \mu|_{\gamma_i} \, h(\gamma_i) e^{\varphi(\gamma_i)} \right\|_o \\
&\le \frac{1}{h(\gamma)\, \lambda} \sum_{i=1}^{\deg(f)} h(\gamma_i) e^{\varphi(\gamma_i)} \, \| \mu|_{\gamma_i} \|_o \\
&\le \frac{1}{h(\gamma)\, \lambda} \, \| \mu \|_\infty \sum_{i=1}^{\deg(f)} h(\gamma_i) e^{\varphi(\gamma_i)} \\
&= \frac{\| \mu \|_\infty}{\lambda} \, \mathcal{L}_{\varphi,h}(1)(\gamma).
\end{align*}
Taking the essential supremum over $\gamma \in M$ with respect to $m$, we conclude
\[
\| \func{\overline{F}}_{\Phi,h} \mu \|_\infty \le \| \mu \|_\infty,
\]
which proves that $\func{\overline{F}}_{\Phi,h}$ is a weak contraction.
\end{proof}

The next lemma describes the disintegration of measures absolutely continuous with respect to $\mu_0$ in terms of the disintegration of $\mu_0$. Its proof can be found in \cite{RRR}, Lemma 8.1.

\begin{lemma}\label{hdgfghddsfg}
Let $(\{\mu_{0,\gamma}\}_\gamma, h)$ be the disintegration of $\mu_0$ along the partition 
\[
\mathcal{F} := \{\{\gamma\} \times K : \gamma \in M\}.
\] 
Let $s : \Sigma \to \mathbb{R}$ be $\mu_0$-integrable and define the measure $\kappa := s\, \mu_0$, that is,
\[
\kappa(E) := \int_E s \, d\mu_0, \quad \text{for all measurable } E \subset \Sigma.
\]
Let $(\{\kappa_\gamma\}_\gamma, \kappa _x)$ be the disintegration of $\kappa$, where $\kappa_x := \pi_{1*} \kappa$. Then:
\begin{enumerate}
    \item $\kappa_x \ll \nu$ and $\kappa_\gamma \ll \mu_{0,\gamma}$;
    \item Denoting
    \[
        \overline{s} := \frac{d\kappa _x}{d\nu},
    \]
    we have
    \begin{equation}\label{fjgh}
        \overline{s}(\gamma) = \int_K s(\gamma, y) \, d(\mu_0|_\gamma)(y), \quad \text{for $\nu$-a.e. } \gamma \in M;
    \end{equation}
    \item For $\kappa _x$-a.e. $\gamma \in M$, the Radon-Nikodym derivative of $\kappa_\gamma$ with respect to $\mu_{0,\gamma}$ satisfies
    \begin{equation*}
        \frac{d\kappa_\gamma}{d\mu_{0,\gamma}}(y) =
        \begin{cases}
            \dfrac{s|_\gamma(y)}{\int_K s|_\gamma(y) \, d\mu_{0,\gamma}(y)}, & \gamma \in B^c,\\[1mm]
            0, & \gamma \in B,
        \end{cases} \quad \text{for all } y \in K,
    \end{equation*}
    where $B := \overline{s}^{-1}(0)$.
\end{enumerate}
\end{lemma}

The following Propositions \ref{olgaa1} and \ref{olgaa11111} are fundamental, as they provide a precise characterization of the operators $\func{\overline{F}}_\Phi$ and $\func{\overline{F}}_{\Phi,h}$ (see Remark~\ref{who} below).

\begin{proposition}\label{olgaa1}
For all $\mu_0 \in \mathbf{AB}_\nu$ and all $\mu_0$-integrable functions $g$ and $s$ it holds 
\begin{equation*}
\int{(g \circ F)\cdot s}d\mu_0 = \int{g}d\func {\overline{F}}_\Phi (s\mu_0).
\end{equation*}	
\end{proposition}
\begin{proof}
To simplify the notation, define
\[
(s\mu_0)_x(\gamma):=\frac{d\,\pi_{1*}(s\mu_0)}{d\nu}(\gamma),
\qquad
c(\gamma):=(s\mu_0)_x(\gamma)\int g\circ F\,d(s\mu_0)_\gamma,
\]
where $(\{(s\mu_0)_\gamma\}_\gamma,(s\mu_0)_x)$ is the disintegration of $s\mu_0$
given by Lemma~\ref{hdgfghddsfg}.

By~\eqref{fixeddd} and Definition~\ref{fphi},
\begin{align*}
\int g\,d\func{\overline{F}}_\Phi(s\mu_0)
&= \int_M \int_\Sigma g \, d(\func{\overline{F}}_\Phi(s\mu_0))_\gamma
   \, d\mathcal{\overline{L}}_\varphi (s\mu_0)_x \nu \\
&= \int_M \frac{1}{\lambda}
   \sum_{y\in f^{-1}(\gamma)} (s\mu_0)_x(y)e^{\varphi(y)}
   \int_\Sigma g\,d(\func{F}_*(s\mu_0)_y)\, d\nu \\
&= \int_M \frac{1}{\lambda}
   \sum_{y\in f^{-1}(\gamma)} (s\mu_0)_x(y)e^{\varphi(y)}
   \int_\Sigma g\circ F\,d(s\mu_0)_y \, d\nu \\
&= \int_M \mathcal{\overline{L}}_\varphi(c)\, d\nu
 = \int_M c \, d\mathcal{\overline{L}}_\varphi^*\nu
 = \int_M c\, d\nu \\
&= \int_M (s\mu_0)_x(\gamma)\int g\circ F\, d(s\mu_0)_\gamma \, d\nu
 = \int_\Sigma g\circ F \, d(s\mu_0) \\
&= \int_\Sigma (g\circ F)\, s \, d\mu_0,
\end{align*}
as claimed.
\end{proof}

Analogously, we have the next proposition. The proof is omitted.

\begin{proposition}\label{olgaa11111}
	For all $\mu_0 \in \mathbf{AB}_m$ and all $\mu_0$-integrable functions $g$ and $s$, it holds that
	\begin{equation*}
		\int (g \circ F) \cdot s \, d\mu_0 = \int g \, d\func{\overline{F}}_{\Phi,h} (s\mu_0).
	\end{equation*}	
\end{proposition}

\begin{remark}\label{who}
	Since
	\[
	\int (g \circ F) \cdot s \, d\mu_0 = \int g \, d\func{F}_*(s\mu_0),
	\]
	where $\func{F}_*$ is the classical transfer operator of $F$, the propositions above imply
	\[
	\func{F}_*(s\mu_0) = \func{\overline{F}}_\Phi(s\mu_0) \quad \text{and} \quad 
	\func{F}_*(s\mu_0) = \func{\overline{F}}_{\Phi,h}(s\mu_0)
	\]
	for all signed measures $s\mu_0$ that are absolutely continuous with respect to $\mu_0$, with $\mu_0 \in \mathbf{AB}_i$, $i=\nu,m$, respectively.
\end{remark}

\begin{proposition}[Lasota--Yorke inequality on $\mathbf{S}^1$]
\label{lasotaoscilation2}
Suppose that $F$ satisfies (P1), (P2), (P3), (G1), Equation~(\ref{Olga2}) of (G2), and (G3).  
Then there exist constants $A>0$, $B_{2}>0$, and $0<\beta _2<1$ such that, for all $\mu \in \mathbf{S}^{1}$ and all $n\ge 1$, it holds
\begin{equation*}
\|\func{\overline{F}}_\Phi^{n}\mu\|_{\mathbf{S}^1}
\leq A \beta _2 ^{n} \|\mu\|_{\mathbf{S}^{1}} + B_{2} \|\mu\|_{1}.
\end{equation*}
\end{proposition}

\begin{proof}
Firstly, recall that $\phi_1$ is the marginal density of the disintegration of $\mu$. Precisely,
\[
\phi_1 = \phi_1^+ - \phi_1^-, \quad \text{with} \quad 
\phi_1^+ = \frac{d\pi_{1*}\mu^+}{d\nu}, \quad \phi_1^- = \frac{d\pi_{1*}\mu^-}{d\nu}.
\]By Equation~(\ref{Olga2}) of (G2), Theorem~\ref{LYgeral}, Definition~\ref{s1111}, and Corollary~\ref{urhirkjdfhkdf}, we obtain
\begin{align*}
\|\func{\overline{F}}_\Phi^n \mu \|_{\mathbf{S}^1} 
&= |\mathcal{\overline{L}}_\varphi^n \phi_1|_s + \|\func{\overline{F}}_\Phi^n \mu\|_1 \\
&\le B_1 \beta_1^n |\phi_1|_s + C_1 |\phi_1|_w + \|\mu\|_1 \\
&\le B_1 \beta_1^n \|\mu\|_{\mathbf{S}^1} + (C_1 + 1) \|\mu\|_1.
\end{align*}The claim follows by setting $\beta_2 = \beta_1$, $A = B_1$, and $B_2 = C_1 + 1$.
\end{proof}

\begin{proposition}[Lasota--Yorke inequality for $\mathbf{S}^\infty$] 
\label{lasotaoscilation22}
For all $\mu \in \mathbf{S}^\infty$ and all $n \ge 1$, it holds
\begin{equation*}
\|\func{\overline{F}}_{\Phi,h}^{\,n}\mu\|_{\mathbf{S}^\infty} 
\le A \beta_2^n \|\mu\|_{\mathbf{S}^\infty} + B_2 \|\mu\|_\infty.
\end{equation*}
\end{proposition}

\begin{proof}
The technical part of the proof is analogous to Proposition~\ref{lasotaoscilation2}, and we omit the details.  
The main differences are: we use Equation~(\ref{Olga1}) instead of (\ref{Olga2}), Proposition~\ref{kdjfjksdkfhjsdfk} instead of Corollary~\ref{urhirkjdfhkdf}, and Equation~(\ref{lasotaiiiityrd}) in place of the corresponding bound for $\mathbf{S}^1$.
\end{proof}

\subsection{Convergence to the equilibrium}\label{invt}

In this section, we establish two key results regarding exponential convergence to the equilibrium. The first, Theorem \ref{5.8}, addresses the norms in $\mathbf{L}^\infty$ and $\mathbf{S}^\infty$. The second, Theorem \ref{5.9}, concerns the normed spaces $\mathbf{L}^1$ and $\mathbf{S}^1$.

We categorize the results presented here into two groups, each dedicated to proving one of the aforementioned theorems. These groups are determined by the Equation (\ref{Olga1}) or (\ref{Olga2}) in condition (G2) that we assume to hold.
	
The first group, defined by (\ref{Olga1}), includes Proposition \ref{5.6}, Corollary \ref{dhjfsjhdfgj}, and Theorem \ref{5.8}. The second group, determined by (\ref{Olga2}), comprises Proposition \ref{5.6ppp}, Corollary \ref{kekrlkdkf}, and Theorem \ref{5.9}. We conclude this section with the proof of Theorem \ref{belongss}.

\begin{proposition}
\label{5.6} 
Suppose that $F$ satisfies (P1), (P2), (P3), (G1), Equation~(\ref{Olga1}) of (G2), and (G3).  
Then, for every signed measure $\mu \in \mathbf{L}^{\infty}$, it holds 
\begin{equation}
\|\func{\overline{F}}_{\Phi, h} \mu \|_{\infty} \le \alpha \|\mu\|_{\infty} + |\phi_1|_w.
\label{abovv}
\end{equation}
\end{proposition}

\begin{proof}
Let $f_i$ denote the inverse branches of $f$, for $i = 1, \dots, \deg(f)$.  
By Equations~(\ref{gjhfjghfjgj}), (\ref{Olga1}), Corollary~\ref{fff}, and Remark~\ref{yturhfvb}, we have, for each $\gamma \in M$,

\begin{align*}
\|(\func{\overline{F}}_{\Phi,h}\mu)|_\gamma\|_o
&= \Big\|\frac{1}{\lambda}\sum_{i=1}^{\deg(f)}
\frac{\func{F}_{\gamma_i*}(\mu|_{\gamma_i})\,h(\gamma_i)e^{\varphi(\gamma_i)}}{h(\gamma)}\Big\|_o
\\
&\le \frac{1}{\lambda}\sum_{i=1}^{\deg(f)}
\Big\|\frac{\func{F}_{\gamma_i*}(\mu|_{\gamma_i})\,h(\gamma_i)e^{\varphi(\gamma_i)}}{h(\gamma)}\Big\|_o
\\
&\le \sum_{i=1}^{\deg(f)}\bigl(\alpha\|\mu|_{\gamma_i}\|_o+|\phi_1(\gamma_i)|_w\bigr)
\frac{h(\gamma_i)e^{\varphi(\gamma_i)}}{h(\gamma)\lambda}
\\
&\le (\alpha\|\mu\|_\infty+|\phi_1|_w)
\sum_{i=1}^{\deg(f)}\frac{h(\gamma_i)e^{\varphi(\gamma_i)}}{h(\gamma)\lambda}
\\
&= (\alpha\|\mu\|_\infty+|\phi_1|_w)\,
\frac{\func{\overline{L}}_{\varphi,h}(1)(\gamma)}{\lambda}
= \alpha\|\mu\|_\infty+|\phi_1|_w .
\end{align*}Taking the supremum over $\gamma \in M$, we obtain \eqref{abovv}, which concludes the proof.
\end{proof}

By iterating Proposition~\ref{5.6}, we obtain the following corollary.

\begin{corollary}\label{dhjfsjhdfgj}
Suppose that $F$ satisfies (P1), (P2), (P3), (G1), Equation~(\ref{Olga1}) of (G2), and (G3).  
Then, for every signed measure $\mu \in \mathbf{L}^{\infty}$, it holds
\begin{equation*}
\|\func{\overline{F}}_{\Phi,h}^{\,n} \mu \|_\infty
\le \alpha^n \|\mu\|_\infty + \overline{\alpha}_1 |\phi_1|_w,
\end{equation*}
where $\overline{\alpha}_1 := \frac{1}{1 - \alpha}$.
\end{corollary}

\begin{proposition}
	\label{5.6ppp} Suppose that $F$ satisfies (P1), (P2), (P3), (G1), equation (\ref{Olga2}) of (G2) and (G3). Then, for every signed measure $\mu \in \mathbf{L}^{1}$, it holds 
	\begin{equation}
		||\func {\overline{F}}_\Phi\mu ||_{1}\leq \alpha ||\mu ||_{1}+(\alpha +1)|\phi
		_{1}|_{w}.  \label{abovvppp}
	\end{equation}
\end{proposition}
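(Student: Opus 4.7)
The plan is to mirror the proof of Proposition~\ref{5.6}, replacing the essential supremum over $M$ with integration against $\nu$ on the base, and to exploit the same leafwise estimates offered by (G1) together with the conformality $\mathcal{\overline{L}}_\varphi^{*}\nu=\nu$ from (\ref{fixeddd}). The only genuinely new element is identifying the right fiber-mass decomposition so that the stated coefficient $(\alpha+1)$ on $|\phi_{1}|_{w}$ appears naturally.

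First, for $\nu$-a.e.\ $\gamma\in M$, I would pick any probability measure $\sigma_{\gamma}$ on $K$ and write
\begin{equation*}
\mu|_{\gamma}=\widetilde{\mu}_{\gamma}+\phi_{1}(\gamma)\sigma_{\gamma},\qquad \widetilde{\mu}_{\gamma}:=\mu|_{\gamma}-\phi_{1}(\gamma)\sigma_{\gamma},
\end{equation*}
where $\mu|_{\gamma}(K)=\phi_{1}(\gamma)$ (by linearity of the restriction in Remark~\ref{ghtyhh} and the definition of $\phi_{1}$), so that $\widetilde{\mu}_{\gamma}(K)=0$. Applying the zero-mass form of (G1.1) to $\widetilde{\mu}_{\gamma}$, the weak contraction (G1.2) to $\phi_{1}(\gamma)\sigma_{\gamma}$, and the normalization $||\sigma_{\gamma}||_{o}=1$ of (G3), I would obtain the leafwise inequality
\begin{equation*}
||F_{\gamma*}\mu|_{\gamma}||_{o}\leq\alpha\bigl(||\mu|_{\gamma}||_{o}+|\phi_{1}(\gamma)|\bigr)+|\phi_{1}(\gamma)|=\alpha||\mu|_{\gamma}||_{o}+(\alpha+1)|\phi_{1}(\gamma)|.
\end{equation*}

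Next, using Corollary~\ref{fff} and the triangle inequality for $||\cdot||_{o}$, the identity
\begin{equation*}
(\func{\overline{F}}_\Phi\mu)|_{\gamma}=\tfrac{1}{\lambda}\sum_{i=1}^{\deg(f)}F_{\gamma_{i}*}\mu|_{\gamma_{i}}\,e^{\varphi(\gamma_{i})}
\end{equation*}
together with the pointwise estimate above yields
\begin{equation*}
||(\func{\overline{F}}_\Phi\mu)|_{\gamma}||_{o}\leq\frac{\alpha}{\lambda}\,\mathcal{L}_{\varphi}\bigl(||\mu|_{\cdot}||_{o}\bigr)(\gamma)+\frac{\alpha+1}{\lambda}\,\mathcal{L}_{\varphi}(|\phi_{1}|)(\gamma).
\end{equation*}
Integrating over $M$ against $\nu$ and using $\mathcal{\overline{L}}_\varphi^{*}\nu=\nu$, each factor $\tfrac{1}{\lambda}\int\mathcal{L}_{\varphi}(\cdot)\,d\nu$ collapses to $\int(\cdot)\,d\nu$, and I arrive at $||\func{\overline{F}}_\Phi\mu||_{1}\leq\alpha||\mu||_{1}+(\alpha+1)|\phi_{1}|_{1}$. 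A final use of (\ref{Olga2}) of (G2), which provides $|\phi_{1}|_{1}\leq|\phi_{1}|_{w}$, closes the estimate.

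There is no substantial obstacle: once the mass-splitting decomposition is in place, everything else is a Fubini-type computation driven by the conformality of $\nu$ and the branch partition (P2). The only point requiring minor care is handling the signed case correctly via the Jordan decomposition, appealing to Remark~\ref{ghtyhh} to ensure the leafwise inequality is valid independently of the representation chosen for $\mu\in\mathcal{L}^{1}$.
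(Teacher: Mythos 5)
Your proof is correct, but it takes a noticeably different route from the paper's. You argue leaf by leaf: subtracting from $\mu|_{\gamma}$ the measure $\phi_{1}(\gamma)\sigma_{\gamma}$ for an arbitrary reference probability $\sigma_{\gamma}$, applying the zero-mass case of (G1.1) to the remainder and (G1.2) together with (G3) to the mass part, you get the single pointwise inequality $||\func{F}_{\gamma\ast}\mu|_{\gamma}||_{o}\leq\alpha||\mu|_{\gamma}||_{o}+(\alpha+1)|\phi_{1}(\gamma)|$, which you then integrate against $\nu$ using conformality. The paper instead works globally with the Jordan decomposition $\mu=\mu^{+}-\mu^{-}$ and the normalized leaf measures $\overline{\mu^{\pm}}|_{\gamma}$, inserting the cross term $\func{F}_{\gamma_{i}\ast}\overline{\mu^{+}}|_{\gamma_{i}}\phi_{1}^{-}(\gamma_{i})$ so as to split the quantity into $\func{I}_{1}$ (a difference of scalar multiples of one pushed-forward probability, evaluated exactly via (G3) and a change of variables, yielding $|\phi_{1}|_{1}$) and $\func{I}_{2}$ (a zero-mass difference of probabilities weighted by $\phi_{1}^{-}$, contracted by (G1.1), yielding $\alpha|\phi_{1}|_{1}+\alpha||\mu||_{1}$). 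Both arguments exploit the same two mechanisms --- the $\alpha$-contraction on zero-mass measures and the normalization $||\cdot||_{o}=1$ on probabilities --- and both produce the constant $(\alpha+1)$; your version is arguably cleaner, avoids the cross-term manipulation, and is structurally closer to the paper's proof of Proposition \ref{5.6}. The only points requiring the care you already flag are that $\mu|_{\gamma}(K)=\phi_{1}(\gamma)$ in the signed case (which follows from the definition of the restriction and Remark \ref{ghtyhh}), and that the identity $\int\mathcal{L}_{\varphi}(g)\,d\nu=\lambda\int g\,d\nu$ applied to the nonnegative integrand $\gamma\mapsto||\mu|_{\gamma}||_{o}$ is justified by the conformality of $\nu$ (Jacobian $\lambda e^{-\varphi}$), which is exactly the change of variables the paper itself performs.
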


\begin{proof}
	Consider a signed measure $\mu \in \mathbf{L}^{1}$ and its restriction on
	the leaf $\gamma $, $\mu |_{\gamma }=\pi _{\gamma ,2\ast }(\phi _{1}(\gamma
	)\mu _{\gamma })$. Set%
	\begin{equation*}
		\overline{\mu }|_{\gamma }=\pi _{\gamma ,2\ast }\mu _{\gamma }.
	\end{equation*}%
	If $\mu $ is a positive measure then $\overline{\mu }|_{\gamma }$ is a
	probability on $K$ and $\mu |_{\gamma }=\phi _{1}(\gamma )\overline{\mu }%
	|_{\gamma }$. Moreover, let $f_{i}$ be the inverse branches of $f$, for all $i=1\cdots \deg(f)$. Applying Equations (\ref{gjhfjghfjgj}) and (\ref{Olga2}) and the expression given by Proposition \ref{disintnorm}
	we have%
	\begin{equation*}
		\begin{split}
			&||\func {\overline{F}}_\Phi\mu ||_{1} \\
			&\leq \sum_{i=1}^{\deg(f)}{\ \int_{f(P_{i})}{\
					\left\vert \left\vert \frac{\func{F}_{\gamma _i\ast }\overline{\mu
							^{+}}|_{\gamma _i}\phi _{1}^{+}(\gamma _i)}{\lambda}e^{\varphi(\gamma_i)}-\frac{\func{F}_{\gamma _i\ast }%
						\overline{\mu ^{-}}|_{\gamma _i}\phi _{1}^{-}(\gamma _i)%
					}{\lambda}e^{\varphi(\gamma_i)}\right\vert \right\vert _o}%
				d\nu(\gamma )} \\
			&\leq \func{I}_{1}+\func{I}_{2},
		\end{split}
	\end{equation*}%
	where%
	\begin{equation*}
		\func{I}_{1}=\sum_{i=1}^{\deg(f)}{\ \int_{f(P_{i})}{\ \left\vert \left\vert \frac{%
					\func{F}_{\gamma _i\ast }\overline{\mu ^{+}}|_{\gamma _i}\phi _{1}^{+}(\gamma _i)}{\lambda}e^{\varphi(\gamma_i)}
				-\frac{\func{F}_{\gamma _i\ast }\overline{\mu ^{+}}%
					|_{\gamma _i}\phi _{1}^{-}(\gamma _i)}{\lambda}e^{\varphi(\gamma_i)}\right\vert \right\vert _o}d\nu(\gamma )%
		}
	\end{equation*}%
	and%
	\begin{equation*}
		\func{I}_{2}=\sum_{i=1}^{\deg(f)}{ \int_{f(P_{i})}{\ \left\vert \left\vert \frac{%
					\func{F}_{\gamma _i\ast }\overline{\mu ^{+}}|_{\gamma _i}\phi _{1}^{-}(\gamma _i)}{\lambda}e^{\varphi(\gamma_i)}
				-\frac{\func{F}_{\gamma _i\ast }\overline{\mu ^{-}}%
					|_{\gamma _i}\phi _{1}^{-}(\gamma _i)}{\lambda}e^{\varphi(\gamma_i)}\right\vert \right\vert _o}d\nu(\gamma )%
		}.
	\end{equation*}%
	In the following we estimate $\func{I}_{1}$ and $\func{I}_{2}$. By Equation (\ref{Olga2}) of (G2), (G3) and a change of variable ($\beta = f_i(\gamma)$) we have (by P1 the Jacobian of $\nu$ is $\lambda e ^{-\varphi}$)

\begin{eqnarray*}
\func{I}_{1}
&=&\sum_{i=1}^{\deg(f)} \int_{f(P_{i})}
e^{\varphi(\gamma_i)}
\left\Vert 
\func{F}_{\gamma_i\ast }\overline{\mu ^{+}}|_{\gamma_i}
\right\Vert _o
\frac{|\phi _{1}^{+}-\phi _{1}^{-}|( \gamma_i)}{\lambda}
\,d\nu (\gamma ) \\
&=&\int_{M}
\left\Vert
\func{F}_{\beta \ast }\overline{\mu ^{+}}|_{\beta }
\right\Vert _o
|\phi _{1}^{+}-\phi _{1}^{-}|(\beta )
\,d\nu(\beta )
=\int_{M} |\phi _{1}^{+}-\phi _{1}^{-}|(\beta )\,d\nu(\beta ) \\
&=&|\phi _{1}|_{1}
\leq |\phi _{1}|_{w}.
\end{eqnarray*}By (G1.1) and the same change of variables as above, we obtain
	
\begin{eqnarray*}
\func{I}_{2}
&=&\sum_{i=1}^{q} \int_{f(P_{i})}
e^{\varphi(\gamma_i)}
\left\Vert 
\func{F}_{\gamma_i\ast }
\left( \overline{\mu ^{+}}|_{\gamma_i}
-\overline{\mu ^{-}}|_{\gamma_i}\right)
\right\Vert _o
\frac{\phi _{1}^{-}(\gamma_i)}{\lambda}
\,d\nu(\gamma ) \\
&\leq&\sum_{i=1}^{\deg(f)} \int_{P_{i}}
\left\Vert 
\func{F}_{\beta \ast }
\left( \overline{\mu ^{+}}|_{\beta }
-\overline{\mu ^{-}}|_{\beta }\right)
\right\Vert _o
\phi _{1}^{-}(\beta )
\,d\nu(\beta ) \\
&\leq&\alpha \int_{M}
\left\Vert 
\overline{\mu ^{+}}|_{\beta }
-\overline{\mu ^{-}}|_{\beta }
\right\Vert _o
\phi _{1}^{-}(\beta )
\,d\nu(\beta ) \\
&\leq&\alpha \int_{M}
\left\Vert 
\overline{\mu ^{+}}|_{\beta }\phi _{1}^{-}(\beta )
-\overline{\mu ^{-}}|_{\beta }\phi _{1}^{-}(\beta )
\right\Vert _o
\,d\nu(\beta ) \\
&\leq&\alpha \int_{M}
\left\Vert 
\overline{\mu ^{+}}|_{\beta }\phi _{1}^{-}(\beta )
-\overline{\mu ^{+}}|_{\beta }\phi _{1}^{+}(\beta )
\right\Vert _o
\,d\nu(\beta ) \\&+&
\alpha \int_{M}
\left\Vert 
\overline{\mu ^{+}}|_{\beta }\phi _{1}^{+}(\beta )
-\overline{\mu ^{-}}|_{\beta }\phi _{1}^{-}(\beta )
\right\Vert _o
\,d\nu(\beta ) \\
&=&\alpha |\phi _{1}|_{1}+\alpha \|\mu\|_{1}
\;\leq\; \alpha |\phi _{1}|_{w}+\alpha \|\mu\|_{1}.
\end{eqnarray*}

	Summing the above estimates we finish the proof.
\end{proof}

Iterating (\ref{abovvppp}) we get the following corollary.

\begin{corollary}\label{kekrlkdkf}
	Suppose that $F$ satisfies (P1), (P2), (P3), (G1), Equation (\ref{Olga2}) of (G2) and (G3). Then, for every signed measure $\mu \in \mathbf{L}^{1}$ it holds 
	\begin{equation*}
		||\func {\overline{F}}_\Phi^{n}\mu ||_{1}\leq \alpha ^{n}||\mu ||_{1}+\overline{%
			\alpha }_2|\phi _1|_{w},
	\end{equation*}%
	where $\overline{\alpha }_2=\frac{1+\alpha }{1-\alpha }$. \label{nicecoropppp}
\end{corollary}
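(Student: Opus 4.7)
The proof is a direct iteration of the inequality in Proposition \ref{5.6ppp}. I set $a_n := ||\func{\overline{F}}_\Phi^n\mu||_1$ for $\mu \in \mathcal{L}^1$, and observe that by Definition \ref{fphi} together with the normalization $\func{\overline{F}}_\Phi = \lambda^{-1}\func{F}_\Phi$, the marginal density of $\func{\overline{F}}_\Phi^n \mu$ equals $\phi_1^{(n)} := \mathcal{\overline{L}}_\varphi^n \phi_1$. Applying Proposition \ref{5.6ppp} to $\func{\overline{F}}_\Phi^{n-1}\mu$ yields the one-step recursion
\begin{equation*}
a_n \leq \alpha\, a_{n-1} + (1+\alpha)\, |\phi_1^{(n-1)}|_w.
\end{equation*}

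The only point that needs care is ensuring that the error term on the right does not blow up in $n$. For this I revisit the proof of Proposition \ref{5.6ppp}: the estimates $\func{I}_1 \leq |\phi_1|_1$ and $\func{I}_2 \leq \alpha|\phi_1|_1 + \alpha||\mu||_1$ are derived \emph{before} the final use of $|\phi_1|_1 \leq |\phi_1|_w$, so in fact the sharper intermediate inequality
\begin{equation*}
||\func{\overline{F}}_\Phi\mu||_1 \leq \alpha||\mu||_1 + (1+\alpha)|\phi_1|_1
\end{equation*}
is available. Since $\mathcal{\overline{L}}_\varphi^{\ast}\nu = \nu$ by (\ref{fixeddd}) and $\mathcal{\overline{L}}_\varphi$ is positivity preserving, a standard computation gives the $L^1_\nu$-contraction $|\mathcal{\overline{L}}_\varphi^k \phi_1|_1 \leq |\phi_1|_1$, and combining this with (\ref{Olga2}) we obtain the uniform control $|\phi_1^{(k)}|_1 \leq |\phi_1|_1 \leq |\phi_1|_w$ for every $k \geq 0$.

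Plugging this into the sharper recursion gives $a_n \leq \alpha\, a_{n-1} + (1+\alpha)\,|\phi_1|_w$ for every $n \geq 1$, and a geometric series immediately yields
\begin{equation*}
a_n \leq \alpha^n ||\mu||_1 + (1+\alpha)|\phi_1|_w \sum_{k=0}^{n-1}\alpha^k \leq \alpha^n ||\mu||_1 + \frac{1+\alpha}{1-\alpha}|\phi_1|_w = \alpha^n ||\mu||_1 + \overline{\alpha}_2|\phi_1|_w,
\end{equation*}
which is the claim. I do not foresee a genuine obstacle: the only subtle point is the uniform control of $|\phi_1^{(k)}|$ across iterations, which is resolved by the $L^1_\nu$-contractivity of the normalized transfer operator together with the comparison $|\cdot|_1 \leq |\cdot|_w$ from (\ref{Olga2}).
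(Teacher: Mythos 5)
Your proof is correct and matches the route the paper intends: the paper dispatches this corollary with only the phrase ``Iterating (\ref{abovvppp}).'' Your added step is genuinely needed and is left implicit in the text: a naive iteration of (\ref{abovvppp}) leaves the terms $|\mathcal{\overline{L}}_\varphi^{k}\phi_1|_w$ in the error sum, and (\ref{Olga2}) supplies only the comparison $|\cdot|_1\leq|\cdot|_w$, not a $w$-norm contraction of $\mathcal{\overline{L}}_\varphi$. Your retreat to the sharper intermediate inequality $||\func{\overline{F}}_\Phi\mu||_1\leq\alpha||\mu||_1+(1+\alpha)|\phi_1|_1$ read off from the proof of Proposition \ref{5.6ppp}, combined with the $L^1_\nu$-contractivity of $\mathcal{\overline{L}}_\varphi$ coming from $\mathcal{\overline{L}}_\varphi^{\ast}\nu=\nu$ in (\ref{fixeddd}) and positivity preservation, is exactly what uniformizes the error term at the level of $|\phi_1|_w$ and lets the geometric series close up at $\overline{\alpha}_2=(1+\alpha)/(1-\alpha)$. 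So your write-up is not only correct but spells out the small step the one-sentence justification in the paper leaves to the reader.
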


Now we are ready to prove Theorems \ref{5.8} and \ref{5.9}.

\begin{proof}[Proof of Theorem \ref{5.8}]
Let $\mu \in \mathbf{V}^\infty$. By Equation~\eqref{lasotaiisdffrr} of Remark~\ref{jhdfgjsd}, it holds
\[
|\mathcal{\overline{L}}_{\varphi,h}^{n}(\phi_1)|_s
\le D_3 r_3^{n} |\phi_1|_s,
\qquad \forall n \ge 1.
\]
Since $|\phi_1|_w \le \|\mu\|_{\infty}$, we obtain
\[
|\mathcal{\overline{L}}_{\varphi,h}^{n}(\phi_1)|_s
\le D_3 r_3^{n} \|\mu\|_{\mathbf{S}^{\infty}},
\qquad \forall n \ge 1.
\]Let $l \in \mathbb{N}$ and $d \in \{0,1\}$ be such that $n = 2l + d$. By Proposition~\ref{kdjfjksdkfhjsdfk}, the operator $\overline{F}_{\Phi,h}$ is a weak contraction, hence
\[
\|\overline{F}_{\Phi,h}^{n}\mu\|_{\infty} \le \|\mu\|_{\infty},
\qquad \forall n \ge 1,
\]
and clearly $\|\mu\|_{\infty} \le \|\mu\|_{\mathbf{S}^{\infty}}$.
By Corollary~\ref{nicecoro}, it follows that (setting
$\beta_3 = \max\{\sqrt{r_3}, \sqrt{\alpha}\}$)
\begin{eqnarray*}
\|\overline{F}_{\Phi,h}^{n}\mu\|_{\infty}
&=& \|\overline{F}_{\Phi,h}^{2l+d}\mu\|_{\infty} \\
&\le& \alpha^{l}
\|\overline{F}_{\Phi,h}^{l+d}\mu\|_{\infty}
+ \overline{\alpha}_1
\left|
\mathcal{\overline{L}}_{\varphi,h}^{\,l+d}(\phi_1)
\right|_w \\
&\le& \alpha^{l}\|\mu\|_{\infty}
+ \overline{\alpha}_1
\left|
\mathcal{\overline{L}}_{\varphi,h}^{\,l+d}(\phi_1)
\right|_w \\
&\le& \alpha^{l}\|\mu\|_{\infty}
+ \overline{\alpha}_1
\left|
\mathcal{\overline{L}}_{\varphi,h}^{\,l}(\phi_1)
\right|_s \\
&\le&
\bigl(
\sqrt{\alpha}^{-1}
+ \overline{\alpha}_1 D_3 \sqrt{r_3}^{-1}
\bigr)
\beta_3^{n}
\|\mu\|_{\mathbf{S}^{\infty}} \\
&\le& D \beta_3^{n} \|\mu\|_{\mathbf{S}^{\infty}},
\end{eqnarray*}
where $D =\sqrt{\alpha}^{-1}+ \overline{\alpha}_1 D_3 \sqrt{r_3}^{-1}.$
\end{proof}The proof of Theorem~\ref{5.9} is analogous and is therefore omitted.

\begin{proof}[Proof of Theorem \ref{belongss}]

By (P3) and (P1), let $\mu _{0}$ be the $F$-invariant measure such that $\pi _{1\ast }\mu _{0}=m$, where $m=h \nu$. Thus, by (G3) it holds $\left\vert \left\vert \mu _{0}|_{\gamma}\right\vert \right\vert _o  = |h(\gamma)|$, since $\dfrac{\pi_{1*}\mu_0}{d \nu} \equiv h$. By (G2) the proof is complete.
	
	The uniqueness follows directly from Theorem \ref{5.8} and \ref{5.9}, since the difference between two probabilities ($\mu _1 - \mu_0$) is a zero average signed measure.
\end{proof}
\subsection{Spectral Gap}\label{jshdjfgsjhdf}
\begin{proof}[Proof of Theorem~\ref{spgap}]
We first show that there exist constants $0<\xi<1$ and $R_{1}>0$ such that, for all $n\ge1$,
\begin{equation}
\|\func{\overline{F}}_\Phi^{n}\|_{\mathbf{V}^1\rightarrow \mathbf{V}^1}
\leq R_{1}\xi^{n},
\label{quaselawww}
\end{equation}
where $\mathbf{V}^1$ denotes the zero-average subspace defined in~\eqref{mathV}.
Fix $\mu \in \mathbf{V}^1$ such that $\|\mu\|_{\mathbf{S}^1}\le1$.
For a given $n\in\mathbb{N}$, write $n=2m+d$, where $m=\frac{n-d}{2}$ and $d\in\{0,1\}$.

By the Lasota--Yorke inequality (Proposition~\ref{lasotaoscilation2}), there exists a uniform bound
\[
\|\func{\overline{F}}_\Phi^{n}\mu\|_{\mathbf{S}^{1}}\le A+B_{2},
\quad \forall\, n\ge1.
\]
Let $\beta_{0}=\max\{\beta_{2},\beta_{4}\}$. Then
\begin{eqnarray*}
\|\func{\overline{F}}_\Phi^{n}\mu\|_{\mathbf{S}^{1}}
&\leq& A\beta_{2}^{m}\|\func{\overline{F}}_\Phi^{m+d}\mu\|_{\mathbf{S}^{1}}
+ B_{2}\|\func{\overline{F}}_\Phi^{m+d}\mu\|_{1} \\
&\leq& A\beta_{2}^{m}(A+B_{2}) + B_{2}D_{4}\beta_{4}^{m} \\
&\leq& \beta_{0}^{m}\bigl[A(A+B_{2})+B_{2}D_{4}\bigr] \\
&\leq& \xi^{n}R_{1},
\end{eqnarray*}
Recall that $\func{\overline{F}}_\Phi:\mathbf{S}^{1}\to\mathbf{S}^{1}$ admits a unique fixed point $\mu_{0}\in\mathbf{S}^{1}$, which is a probability measure.
Define the projection $\func{P}:\mathbf{S}^{1}\to \mathrm{span}\{\mu_{0}\}$ by
\[
\func{P}(\mu)=\mu(\Sigma)\,\mu_{0}.
\]
Setting $\func{N}=\func{\overline{F}}_\Phi\circ\func{S}$, we obtain
$\func{\overline{F}}_\Phi=\func{P}+\func{N}$, with $\func{P}\func{N}=\func{N}\func{P}=0$. Then, $\func{N}^n=\func{\overline{F}}_\Phi^n\circ\func{S}$ for all $n\geq1$.
Moreover, since $\func{S}$ is bounded and $\func{S}(\mu)\in\mathbf{V}^1$, estimate~\eqref{quaselawww} yields
\[
\|\func{N}^{n}(\mu)\|_{\mathbf{S}^{1}}
\le R\xi^{n}\|\mu\|_{\mathbf{S}^{1}},
\quad \forall\, n\ge1,
\]
where $R=R_{1}\|\func{S}\|_{\mathbf{S}^{1}\to\mathbf{S}^{1}}$.
\end{proof}The proof of Theorem~\ref{spgapp} follows the same strategy and is therefore omitted.

\subsection{Exponential Decay of Correlations}\label{dfjgsghdfjasdf}

In this subsection we establish exponential decay of correlations for the dynamical system $F$ with respect to the distinguished invariant
probability measure $\mu_0$ provided by assumption~(P3).

More precisely, we prove that correlations between pairs of observables
$u$ and $v$, belonging respectively to the classes $B'_i$ and
$\Theta_{\mu_0}^i$ for $i \in \{1,\infty\}$ (introduced before the statements of
Theorems~\ref{shkjfjdhsf} and~\ref{slkdgjsdg}), decay exponentially fast.
The result follows as a direct consequence of the spectral gap properties
of the transfer operators $\func{\overline{F}}_\Phi$ acting on the normed spaces
$\mathbf{S}^1$ and $\mathbf{S}^\infty$.

We present the proof of Theorem~\ref{shkjfjdhsf}, corresponding to the
$\mathbf{S}^1$ setting. The proof of Theorem~\ref{slkdgjsdg} follows by the same
argument, replacing $\mathbf{S}^1$ with $\mathbf{S}^\infty$ and using Theorem \ref{spgapp} and Equation (\ref{poyuoypuiyu}).

\begin{proof}[Proof of Theorem~\ref{shkjfjdhsf}]
Let $u \in B'_1$ and $v \in \Theta_{\mu_0}^1$.  
By Theorem~\ref{spgap} and Proposition~\ref{olgaa1}, we obtain
\begin{align*}
\left| \int (u \circ F^n) v \, d\mu_0 - \int u \, d\mu_0 \int v \, d\mu_0 \right|
&= \left| \int u \, d\func{\overline{F}}_\Phi^{n}(v\mu_0)
      - \int u \, d\func{P}(v\mu_0) \right| \\
&\le R_3 \, \big\| \func{\overline{F}}_\Phi^{n}(v\mu_0)
      - \func{P}(v\mu_0) \big\|_{\mathbf{S}^1} \\
&= R_3 \, \big\| \func{N}^n (v\mu_0) \big\|_{\mathbf{S}^1} \\
&\le R_3 R \, \xi^n \, \| v \mu_0 \|_{\mathbf{S}^1},
\end{align*}which concludes the proof.
\end{proof}

\section{Application on Piecewise Partially Hyperbolic Maps}\label{appppl}
\subsection{The dynamics}

Let $F:\Sigma \longrightarrow \Sigma$ be the map defined by
\begin{equation*}
	F(x,z) = (f(x), G(x,z)),
\end{equation*}
where $G:\Sigma \longrightarrow K$ and $f:M \longrightarrow M$ are measurable functions satisfying the assumptions below.

\subsubsection{Hypotheses on $f$}\label{hf}

Assume that $f:M \longrightarrow M$ is a local homeomorphism. Suppose there exists a continuous function $L:M \longrightarrow \mathbb{R}$ such that, for every $x \in M$, there is a neighborhood $U_x$ of $x$ for which the restriction
\[
f_x := f|_{U_x} : U_x \longrightarrow f(U_x)
\]
is invertible and satisfies
\begin{equation}\label{iirotyirty}
	d_1\bigl(f_x^{-1}(y), f_x^{-1}(z)\bigr) \leq L(x)\, d_1(y,z),
	\quad \forall\, y,z \in f(U_x).
\end{equation}In particular, the cardinality $\# f^{-1}(x)$ is constant for all $x \in M$. We denote this constant by $\deg(f)$.

Assume further that there exist an open set $\mathcal{A} \subset M$ and constants $\sigma > 1$ and $L \geq 1$ such that:
\begin{enumerate}
	\item[(f1)] $L(x) \leq L$ for all $x \in \mathcal{A}$, and $L(x) < \sigma^{-1}$ for all $x \in \mathcal{A}^c$. Moreover, $L$ is sufficiently close to $1$ (a precise bound is given in Equation \eqref{kdljfhkdjfkasd});
	\item[(f2)] There exists a finite open covering $\mathcal{U}$ of $M$ by domains of injectivity of $f$ such that $\mathcal{A}$ can be covered by $q < \deg(f)$ elements of $\mathcal{U}$.
\end{enumerate}

Condition (f1) allows expanding and contracting behavior to coexist on $M$: the map $f$ is uniformly expanding outside $\mathcal{A}$ and not too contracting inside $\mathcal{A}$. If $\mathcal{A} = \varnothing$, then $f$ is uniformly expanding. Condition (f2) ensures that every point has at least one preimage in the expanding region.

\begin{definition}\label{ejhrkjhe}
For $0 < \zeta \leq 1$, let $H_\zeta$ denote the space of $\zeta$-H\"older continuous functions $h:M \longrightarrow \mathbb{R}$, endowed with the seminorm
\[
H_\zeta(h) := \sup_{x \neq y} \frac{|h(x) - h(y)|}{d_1(x,y)^\zeta}.
\]
That is,
\[
H_\zeta := \bigl\{ h:M \longrightarrow \mathbb{R} \; : \; H_\zeta(h) < \infty \bigr\}.
\]
\end{definition}

\begin{definition}\label{PH}
Let $\mathscr{P}_M$ be the set of H\"older potentials satisfying the following condition, which is open with respect to the H\"older norm:
\begin{enumerate}
	\item[(f3)] There exists $\epsilon_\varphi > 0$ sufficiently small such that
	\begin{equation}\label{f31}
		\sup \varphi - \inf \varphi < \epsilon_\varphi,
	\end{equation}
	and
	\begin{equation}\label{f32}
		H_\zeta(e^\varphi) < \epsilon_\varphi\, e^{\inf \varphi}.
	\end{equation}
\end{enumerate}
We assume that the constants $\epsilon_\varphi$ and $L$ satisfy
\begin{equation}\label{kdljfhkdjfkasd}
	\exp(\epsilon_\varphi)\,
	\frac{(\deg(f)-q)\sigma^{-\alpha} + q L^\alpha \bigl[1 + (L-1)^\alpha\bigr]}
	{\deg(f)} < 1.
\end{equation}
\end{definition}Condition \eqref{f32} means that $\varphi$ belongs to a small cone of H\"older continuous functions (see \cite{VAC}).

According to \cite{VAC}, any map $f:M \longrightarrow M$ satisfying (f1), (f2), and (f3) admits an invariant probability measure $m$ of maximal entropy, which is absolutely continuous with respect to a conformal measure $\nu$. Moreover, for every $\varphi \in \mathscr{P}_M$, the normalized Ruelle--Perron--Frobenius operator $\overline{\mathcal{L}}_\varphi$ (see \eqref{hhdfhjdghf}) satisfies property (P1), with $(B,|\cdot|_b) = (H_\zeta,|\cdot|_\zeta)$ and $(B_w,|\cdot|_w) = (C^0,|\cdot|_\infty)$. Since $M$ is compact, property (P2) also holds (see Lemma~2.2 in \cite{RRR}).

\subsubsection{Hypotheses on $G$}\label{sjdfgjsdhf}

Assume that $G:\Sigma \longrightarrow K$ satisfies the following condition:
\begin{enumerate}
	\item[(H1)] $G$ is uniformly contracting along $\nu$-almost every vertical fiber
	\[
	\gamma_x := \{x\} \times K.
	\]
	That is, there exists a constant $0 \leq \alpha < 1$ such that, for $\nu$-almost every $x \in M$, one has
	\begin{equation}\label{contracting1}
		d_2\bigl(G(x,z_1), G(x,z_2)\bigr)
		\leq \alpha\, d_2(z_1,z_2),
		\quad \forall\, z_1,z_2 \in K.
	\end{equation}
\end{enumerate}

We denote by $\mathcal{F}$ the family of all vertical fibers, namely
\[
\mathcal{F} := \{ \gamma_x := \{x\} \times K \; ; \; x \in M \}.
\]
When no confusion arises, elements of $\mathcal{F}$ will be denoted simply by $\gamma$.

Proposition~\ref{kjdhkskjfkjskdjf} below ensures the existence and uniqueness of an $F$-invariant probability measure $\mu_0$ projecting onto $m$. Since its proof follows standard arguments (see, for instance, \cite{AP}), we omit it. Consequently, if $F$ satisfies conditions (f1), (f2), (f3), and (H1), then property (P3) holds.

\begin{proposition}\label{kjdhkskjfkjskdjf}
	Let $m_1$ be an $f$-invariant probability. If $F$ satisfies (H1), then there exists an unique measure $\mu_1$ on $M \times K$ such that $\pi_1{_\ast}\mu_1 = m_1$ and for every continuous function $\psi \in C^0 (M \times K)$ it holds 
	\begin{equation*}
		\lim {\int{\inf_{\gamma \times K} \psi \circ F^n }dm_1(\gamma)}= \lim {\int{\sup_{\gamma \times K} \psi \circ F^n}dm_1 (\gamma)}=\int {\psi}d\mu_1. 
	\end{equation*}Moreover, the measure $\mu_1$ is $F$-invariant.
\end{proposition}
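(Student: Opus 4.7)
The strategy is to construct $\mu_1$ by dynamical averaging of any reference measure projecting to $m_1$. The crucial dynamical input is that hypothesis (H1) forces the vertical fiber $F^n(\{x\}\times K)$ to shrink in the fiber direction at exponential rate $\alpha^n$. Concretely, for any continuous $\psi\in C^0(\Sigma)$ set
\[
h_n^{\psi}(x):=\sup_{z\in K}\psi(F^n(x,z)),\qquad g_n^{\psi}(x):=\inf_{z\in K}\psi(F^n(x,z)).
\]
Since $F^n(\{x\}\times K)\subset\{f^n(x)\}\times G_n(x,K)$ and (H1) gives $\diam(G_n(x,K))\le \alpha^n\diam(K)$, uniform continuity of $\psi$ on the compact space $\Sigma$ yields $\|h_n^{\psi}-g_n^{\psi}\|_{\infty}\le \omega_\psi(\alpha^n\diam(K))\to 0$.

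First I would verify monotonicity of the integrated sequences. Using $F^{n+1}(x,z)=F^n(f(x),G(x,z))$, I get $h_{n+1}^{\psi}(x)\le h_n^{\psi}(f(x))$ and $g_{n+1}^{\psi}(x)\ge g_n^{\psi}(f(x))$. By $f$-invariance of $m_1$, the sequence $\int h_n^{\psi}\,dm_1$ is non-increasing and $\int g_n^{\psi}\,dm_1$ is non-decreasing; both are uniformly bounded by $\|\psi\|_\infty$, hence convergent, and by the uniform estimate on $h_n^\psi-g_n^\psi$ they share a common limit $\Lambda(\psi)$.

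Second, I would show $\Lambda$ is a positive linear functional on $C^0(\Sigma)$ with $\Lambda(1)=1$. Positivity and normalization are evident; linearity is the only delicate point, since $\sup$ and $\inf$ are not linear. To fix this, pick any probability $\nu_0$ on $\Sigma$ with $\pi_{1*}\nu_0=m_1$ (e.g.~$m_1\times\delta_{z_0}$), disintegrate it as $(\{\nu_{0,x}\}_x, m_1)$, and consider the visibly linear functional $T_n(\psi)(x):=\int_K \psi(F^n(x,z))\,d\nu_{0,x}(z)$, which is sandwiched by $g_n^{\psi}\le T_n(\psi)\le h_n^{\psi}$. Therefore $\int T_n(\psi)\,dm_1=\int\psi\,d(F^n_*\nu_0)$ converges to $\Lambda(\psi)$ and inherits linearity. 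The Riesz representation theorem then produces the probability $\mu_1$ with $\Lambda(\psi)=\int\psi\,d\mu_1$, and by weak convergence $F^n_*\nu_0\to\mu_1$ together with $\pi_{1*}F^n_*\nu_0=f^n_*m_1=m_1$ one obtains $\pi_{1*}\mu_1=m_1$.

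Finally, $F$-invariance follows from $\Lambda(\psi\circ F)=\lim_n\int h_{n+1}^{\psi}\,dm_1=\Lambda(\psi)$, since $\sup_{z\in K}(\psi\circ F)(F^n(x,z))=h_{n+1}^{\psi}(x)$; and uniqueness among $F$-invariant measures projecting to $m_1$ is a direct sandwich: for any such $\mu_1'$ one has $\int\psi\,d\mu_1'=\int\psi\circ F^n\,d\mu_1'\in [\int g_n^{\psi}dm_1,\int h_n^{\psi}dm_1]\to\Lambda(\psi)$. The main subtlety I expect is the $F$-invariance step when $F$ is only measurable: $\psi\circ F$ need not be continuous, so the Riesz identity $\Lambda(\phi)=\int\phi\,d\mu_1$ must be extended. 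The remedy is to observe that the sandwich construction of $\Lambda$ via $T_n$ works verbatim for any bounded measurable $\phi$ whose fiber-oscillation $h_n^{\phi}-g_n^{\phi}$ vanishes uniformly, and this holds for $\phi=\psi\circ F$ by exactly the same contraction argument applied to $\psi$ at iterate $n+1$.
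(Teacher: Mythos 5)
The paper itself offers no proof of this proposition (it defers to ``standard arguments'' in \cite{AP}), so your proposal has to stand on its own. Most of it does: the uniform estimate $\|h_n^{\psi}-g_n^{\psi}\|_\infty\le\omega_\psi(\alpha^n\diam K)$, the monotonicity of $\int h_n^\psi\,dm_1$ and $\int g_n^\psi\,dm_1$ via $f$-invariance, the linearization through $T_n$ and $F^n_*\nu_0$, the Riesz representation, the identity $\pi_{1*}\mu_1=m_1$, and the sandwich argument for uniqueness are all correct.

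The gap is exactly at the point you flag as ``the main subtlety,'' and your remedy does not close it. For a bounded measurable $\phi$ with uniformly vanishing fiber-oscillation, the sandwich construction shows that $\lim_n\int\phi\,d(F^n_*\nu_0)$ exists and equals the common limit $\Lambda(\phi)$ of $\int g_n^\phi\,dm_1$ and $\int h_n^\phi\,dm_1$; it does \emph{not} show that $\Lambda(\phi)=\int\phi\,d\mu_1$. That last identity comes from Riesz representation and is available only for continuous $\phi$. What the sandwich gives for the measure $\mu_1$ itself is only the level-zero window: $\int\psi\circ F\,d\mu_1\in\bigl[\int g_1^\psi\,dm_1,\int h_1^\psi\,dm_1\bigr]$, which together with $\int\psi\,d\mu_1=\Lambda(\psi)\in\bigl[\int g_1^\psi\,dm_1,\int h_1^\psi\,dm_1\bigr]$ yields $\bigl|\int\psi\circ F\,d\mu_1-\int\psi\,d\mu_1\bigr|\le\omega_\psi(\alpha\diam K)$ — a bound that neither iterating nor telescoping improves to zero. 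Worse, for $\phi=\psi\circ F$ the missing identity $\Lambda(\phi)=\int\phi\,d\mu_1$ is \emph{literally equivalent} to $\int\psi\,d(F_*\mu_1)=\int\psi\,d\mu_1$, i.e.\ to the invariance you are trying to prove; the remedy is therefore circular. (If $F$ is continuous the issue evaporates, since then $\psi\circ F$ is continuous and $F_*$ commutes with weak limits; but the whole point of this paper is that $G$, hence $F$, may be discontinuous.)

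To repair the invariance step one needs control of \emph{consecutive} iterates, not just of the limit. The standard route, and the one implicit in the paper's own machinery, is to show that $F_*$ is an $\alpha$-contraction for the fibered Wasserstein distance $d(\eta,\eta')=\int_M W_1(\eta_y,\eta'_y)\,dm_1(y)$ on the complete metric space of probabilities projecting to $m_1$ (this is precisely the content of Lemma \ref{opsdas} combined with the change-of-variables computation in Proposition \ref{5.6ppp}); Banach's fixed point theorem then produces an $F$-invariant $\mu_1$ with $F^n_*\nu_0\to\mu_1$ in $d$, and your sandwich identifies this fixed point with the Riesz measure and yields the stated limit formula.
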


\begin{remark}\label{pot}
Observe that the probability measure $m$ is an equilibrium state for a potential
$\varphi \in \mathscr{P}_M$. Consequently, the invariant measure $\mu_0$ depends on
the function $\Phi := \varphi \circ \pi_1 \in \mathscr{P}_\Sigma$
(see Equation~(\ref{PPP})).
\end{remark}

\subsubsection{The $||\cdot||_o$ norm}

In this section, we define a norm satisfying condition (G1).

\begin{definition}\label{wasserstein}
Given two signed measures $\mu$ and $\nu$ on $K$, we define a
\textbf{Wasserstein--Kantorovich-like} distance between $\mu$ and $\nu$ as follows
(cf.\ Definition~\ref{ejhrkjhe}):
\[
	W_{1}^{\zeta}(\mu,\nu)
	:= \sup_{\substack{H_\zeta(g)\leq 1\\ |g|_{\infty}\leq 1}}
	\left| \int g\,d\mu - \int g\,d\nu \right|.
\]
\end{definition}

Since the constant $0 < \zeta \leq 1$ is fixed, we shall henceforth write
\[
	\|\mu\|_o := W_{1}^{\zeta}(0,\mu).
\] In fact, $\|\cdot\|_o$ defines a norm on the vector space of signed measures on a
compact metric space, which is equivalent to the dual norm of the space of
$\zeta$-H\"older continuous functions.

Other applications of particular cases of this metric to statistical properties
can be found in \cite{RRR, RRR2, GLu, ben, LiLu}. For instance, in \cite{ben} the
author applies this metric to a more general class of shrinking fibers systems,
whereas in \cite{RRR2} a quantitative statistical stability result is obtained.

The next lemmas, Lemmas~\ref{niceformulaac} and~\ref{opsdas}, yield that if we set
$\|\cdot\|_o := W_1^\zeta(0,\cdot)$, then any map $F$ satisfying
(f1), (f2), (f3), and (H1) also satisfies conditions (G1) and (G3). Moreover, since
\[
|\phi_1(\gamma)|
= |\mu|_\gamma(K)|
= \left| \int 1 \, d\mu|_\gamma \right|
\leq \|\mu|_\gamma\|_o,
\]
we conclude that $F$ satisfies Equations~(\ref{Olga1}) and~(\ref{Olga2}) of (G2).

Other applications of particular cases of this metric on statistical properties can be seen in \cite{RRR}, \cite{RRR2}, \cite{GLu}, \cite{ben} and \cite {LiLu}. For instance, in \cite {ben} the author apply this metric to a more general case of shrinking fibers systems. While in \cite{RRR2} a quantitative statistical stability statement was obtained. 

Next Lemmas \ref{niceformulaac} and \ref{opsdas} yields that, if we set $||\cdot ||_o:= W^\zeta _1(0, \cdot)$, then a map $F$ which satisfies (f1), (f2), (f3) and (H1), also satisfies (G1) and (G3). Moreover, since $|\phi_1 (\gamma)| = |\mu |_\gamma (K)|= |\int 1 d\mu |_\gamma |\leq ||\mu|_\gamma||_o$ we have that $F$ satisfies (\ref{Olga1}) and (\ref{Olga2}) of (G2).

\begin{lemma}
	\label{niceformulaac} For every $\mu \in \mathbf{AB}_\nu$ and $\nu$-a.e. stable leaf $%
	\gamma \in \mathcal{F}$, it holds 
	\begin{equation}
		||\func{F}_{\gamma \ast }\mu |_{\gamma }||_o\leq ||\mu |_{\gamma }||_o,
		\label{weak1}
	\end{equation}%
	where $F_{\gamma }:K\longrightarrow K$ is defined in Equation (\ref{ritiruwt}) and $\func{F}_{\gamma \ast }$ is the associated pushforward
	map. Moreover, if $\mu $ is a probability measure on $K$, for $\nu$-a.e. stable leaf $\gamma \in \mathcal{F}$ it holds 
	\begin{equation}
		||\func{F}_{\gamma \ast}^{n}\mu ||_o=||\mu ||_o=1,\ \ \forall \ \
		n\geq 1.  \label{simples}
	\end{equation}
\end{lemma}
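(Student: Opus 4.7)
The plan is to exploit the dual characterisation of the norm $||\cdot||_o = W_1^\zeta(0,\cdot)$, which expresses $||\nu||_o$ as the supremum of $|\int g\, d\nu|$ over the class of test functions $g:K\to\mathbb{R}$ satisfying $H_\zeta(g)\leq 1$ and $|g|_\infty\leq 1$. Once we fix such a test function, a change-of-variable identity gives $\int g\, d(F_{\gamma\ast}\mu|_\gamma) = \int g\circ F_\gamma\, d(\mu|_\gamma)$, and the entire argument reduces to checking that $g\circ F_\gamma$ still belongs to the admissible class of test functions, so that one may apply the definition of $||\cdot||_o$ directly to $\mu|_\gamma$.

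The main step will be verifying that, for $m$-a.e.\ leaf $\gamma = \gamma_x \in \mathcal{F}$, the composition $g\circ F_\gamma$ satisfies the two relevant bounds. The sup-norm estimate $|g\circ F_\gamma|_\infty \leq |g|_\infty \leq 1$ is immediate. For the H\"older control I would use the identification $F_\gamma(z) = G(x,z)$ together with hypothesis (H1): for $m$-a.e.\ $x$, the map $F_\gamma$ is $\alpha$-Lipschitz, so for all $z_1, z_2 \in K$,
\begin{equation*}
|g(F_\gamma(z_1)) - g(F_\gamma(z_2))| \leq H_\zeta(g)\, d_2(F_\gamma(z_1),F_\gamma(z_2))^\zeta \leq \alpha^\zeta d_2(z_1,z_2)^\zeta,
\end{equation*}
hence $H_\zeta(g\circ F_\gamma) \leq \alpha^\zeta \leq 1$ (since $0 \leq \alpha < 1$). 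Applying the definition of $||\cdot||_o$ to $\mu|_\gamma$ with the admissible test function $g\circ F_\gamma$ yields $|\int g\, d(F_{\gamma\ast}\mu|_\gamma)| \leq ||\mu|_\gamma||_o$, and taking the supremum over admissible $g$ gives (\ref{weak1}).

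For the second assertion, suppose $\mu$ is a probability measure on $K$. The constant function $g\equiv 1$ is admissible (since its H\"older seminorm vanishes and its sup-norm equals $1$) and gives $\int g\, d\mu = 1$, so $||\mu||_o \geq 1$; conversely every admissible $g$ satisfies $|g|_\infty \leq 1$, which forces $|\int g\, d\mu| \leq 1$. Hence $||\mu||_o = 1$. Since measurable pushforwards of probability measures remain probabilities, each $F_{\gamma\ast}^n \mu$ is again a probability, and the same argument gives $||F_{\gamma\ast}^n\mu||_o = 1$ for every $n \geq 1$.

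I do not anticipate any serious obstacle here: the lemma is essentially the standard Kantorovich-Rubinstein type duality between $W_1^\zeta$ and the $\zeta$-H\"older test functions, combined with the contractivity of $F_\gamma$ provided by (H1). The only mild point to keep in mind is that (H1) gives the $\alpha$-Lipschitz estimate only for $m$-a.e.\ fibre, which is exactly compatible with the "$m$-a.e." qualifier in the conclusion. Note also that the proof actually produces the slightly stronger bound $H_\zeta(g\circ F_\gamma) \leq \alpha^\zeta$, which reflects a genuine contraction on the oscillatory part but is not needed for the present statement.
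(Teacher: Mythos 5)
Your proof is correct and matches the paper's argument in all essentials: the same duality-via-test-functions reduction, the same change-of-variable identity, the same verification that $g\circ F_\gamma$ remains admissible because $F_\gamma$ is an $\alpha$-contraction on $m$-a.e.\ fibre, and the same $g\equiv 1$ argument for the probability case. The paper compresses the admissibility check into one line while you spell out the $H_\zeta(g\circ F_\gamma)\leq\alpha^\zeta\leq 1$ estimate explicitly, but that is a difference of exposition, not of method.
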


\begin{proof}
Since $F_{\gamma}$ is an $\alpha$-contraction, if $|g|_{\infty} \leq 1$ and
$H_\zeta(g) \leq 1$, then the same bounds hold for $g \circ F_{\gamma}$. Moreover,
\begin{equation*}
	\left| \int g \, d\func{F}_{\gamma\ast}\mu|_{\gamma} \right|
	= \left| \int g \circ F_{\gamma} \, d\mu|_{\gamma} \right|.
\end{equation*}
Taking the supremum over all functions $g$ such that $|g|_{\infty} \leq 1$ and
$H_\zeta(g) \leq 1$, we conclude the proof of inequality~(\ref{weak1}).

To prove Equation~(\ref{simples}), let $\mu$ be a probability measure on $K$ and
let $g : K \to \mathbb{R}$ be a $\zeta$-H\"older function with
$\|g\|_{\infty} \leq 1$. Then
\[
\left| \int g \, d\mu \right| \leq \|g\|_{\infty} \leq 1,
\]
which implies $\|\mu\|_o \leq 1$. On the other hand, choosing $g \equiv 1$, we obtain
$\|\mu\|_o = 1$, concluding the proof.
\end{proof}

\begin{lemma}\label{opsdas}
	For all signed measures $\mu $ on $K$ and for $\nu$-a.e. $\gamma \in \mathcal{F}%
	$, it holds%
	\begin{equation*}
		||\func{F}_{\gamma \ast }\mu ||_o\leq \alpha^\zeta ||\mu ||_o+|\mu (K)|
	\end{equation*}%
	($\alpha $ is the rate of contraction of $G$, see \eqref{contracting1}). In
	particular, if $\mu (K)=0$ then%
	\begin{equation*}
		||\func{F}_{\gamma \ast }\mu ||_o\leq \alpha^\zeta ||\mu ||_o.
	\end{equation*}%
	\label{quasicontract}
\end{lemma}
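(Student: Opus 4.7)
The plan is to exploit the duality in the definition of $||\cdot||_o = W_1^\zeta(0,\cdot)$ by testing $F_{\gamma*}\mu$ against a generic $\zeta$-H\"older function $g:K\to\mathbb{R}$ with $|g|_\infty\le 1$ and $H_\zeta(g)\le 1$, and then pushing $g$ through $F_\gamma$. By definition of the pushforward,
\[
\int g\,d(F_{\gamma*}\mu)=\int g\circ F_\gamma\,d\mu.
\]
The composition inherits a smaller H\"older constant from the fact that $F_\gamma$ is $\alpha$-contracting (by (H1) and the construction of $F_\gamma$): for all $x,y\in K$,
\[
|g(F_\gamma(x))-g(F_\gamma(y))|\le H_\zeta(g)\,d_2(F_\gamma(x),F_\gamma(y))^\zeta\le \alpha^\zeta d_2(x,y)^\zeta,
\]
so $H_\zeta(g\circ F_\gamma)\le \alpha^\zeta$. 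This by itself is not enough, because $g\circ F_\gamma$ still has sup norm bounded by $1$ (not by $\alpha^\zeta$), so one cannot feed it directly into $||\mu||_o$ after scaling.

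The remedy, which is the core of the argument, is to subtract off a constant. Fix a point $x_0\in K$ and set $c:=g(F_\gamma(x_0))$, so that $|c|\le|g|_\infty\le 1$, and write $g\circ F_\gamma=(g\circ F_\gamma-c)+c$. Assuming (by rescaling the metric $d_2$, which does not affect the conclusion since $\alpha^\zeta$ is dimensionless) that $\diam(K)\le 1$, we get
\[
|g(F_\gamma(z))-c|\le H_\zeta(g)\,d_2(F_\gamma(z),F_\gamma(x_0))^\zeta\le \alpha^\zeta\diam(K)^\zeta\le\alpha^\zeta,
\]
while $H_\zeta(g\circ F_\gamma-c)=H_\zeta(g\circ F_\gamma)\le\alpha^\zeta$. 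Consequently $\tilde g:=(g\circ F_\gamma-c)/\alpha^\zeta$ is an admissible test function for $||\mu||_o$.

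Substituting back,
\[
\left|\int g\,d(F_{\gamma*}\mu)\right|
=\left|\alpha^\zeta\!\int \tilde g\,d\mu+c\,\mu(K)\right|
\le \alpha^\zeta\,||\mu||_o+|\mu(K)|,
\]
and taking the supremum over all admissible $g$ yields the claimed inequality. The particular case $\mu(K)=0$ is then immediate. The main (and essentially only) subtlety is the diameter normalization underlying the choice of $c$; this is a standard cosmetic issue, since any diameter factor can be absorbed into the H\"older seminorm used to define $||\cdot||_o$. The rest is a routine splitting of $g\circ F_\gamma$ into its ``oscillation part'' (handled by the contraction) and its ``mean part'' (handled by the total-mass term $|\mu(K)|$).
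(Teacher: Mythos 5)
Your proof is essentially identical to the paper's: both test $F_{\gamma*}\mu$ against an admissible $g$ by duality, both observe $H_\zeta(g\circ F_\gamma)\le\alpha^\zeta$, both subtract a constant $\theta=g(F_\gamma(z))$ (your $c$) from $g\circ F_\gamma$ to get a test function with small sup-norm, and both invoke $\diam(K)=1$ to bound $|g\circ F_\gamma-\theta|_\infty\le\alpha^\zeta$ before splitting the integral into the oscillation term and the $\theta\,\mu(K)$ term. You correctly flagged the diameter normalization; the paper assumes $\diam(K)=1$ at the same point.
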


\begin{proof}
	If $H_\zeta(g)\leq 1$ and $||g||_{\infty }\leq 1$, then $g\circ F_{\gamma }$ is $%
	\alpha^\zeta $-H\"older. Moreover, since $||g||_{\infty }\leq 1$, then $||g\circ
	F_{\gamma }-\theta ||_{\infty }\leq \alpha^\zeta $, for some $\theta $ such that $%
	|\theta |\leq 1$. Indeed, let $z\in K$ be such that $|g\circ F_{\gamma
	}(z)|\leq 1$, set $\theta =g\circ F_{\gamma }(z)$ and let $d_{2}$ be the
 metric of $K$. Since $\diam(K)=1$, we have 
	\begin{equation*}
		|g\circ F_{\gamma }(y)-\theta |\leq \alpha^\zeta d_{2}(y,z)\leq \alpha^\zeta
	\end{equation*}%
	and consequently $||g\circ F_{\gamma }-\theta ||_{\infty }\leq \alpha^\zeta $.
	
	This implies
	
	\begin{align*}
		\left\vert \int_{K}{g}d\func{F}_{\gamma \ast }\mu \right\vert &
		=\left\vert \int_{K}{g\circ F_{\gamma }}d\mu \right\vert \\
		& \leq \left\vert \int_{K}{g\circ F_{\gamma }-\theta }d\mu \right\vert
		+\left\vert \int_{K}{\theta }d\mu \right\vert \\
		& =\alpha^\zeta \left\vert \int_{K}{\frac{g\circ F_{\gamma }-\theta }{\alpha^\zeta }}%
		d\mu \right\vert +|\theta ||\mu (K)|.
	\end{align*}%
	Taking the supremum over $g$ such that $|g|_{\infty }\leq 1$ and $%
	H_\zeta(g)\leq 1$, we have $||\func{F}_{\gamma \ast }\mu ||_o\leq \alpha^\zeta ||\mu
	||_o+|\mu (K)|$. In particular, if $\mu (K)=0$, we get the second
	part.
\end{proof}

\subsubsection{Consequences for the dynamics of $F$}

With the following definitions in place, all the results of
Section~\ref{kjdfkjdsfkj} apply to the map $F$.

\begin{definition}\label{P}
Define
\begin{equation*}
	\mathscr{P}_\Sigma
	:= \left\{ \Phi \; ; \; \Phi = \varphi \circ \pi_1,
	\ \varphi \in \mathscr{P}_M \right\}.
\end{equation*}
\end{definition}

\begin{definition}\label{s11112}
Define the vector space $\mathbf{S}^1$ by
\begin{equation*}
	\mathbf{S}^{1}
	= \left\{ \mu \in \mathbf{L}^{1} \; ; \; \phi_{1} \in H_\zeta \right\},
\end{equation*}
and equip it with the norm
$ \|\cdot\|_{\mathbf{S}^{1}} : \mathbf{S}^{1} \to \mathbb{R} $ given by
\begin{equation*}
	\|\mu\|_{\mathbf{S}^{1}}
	:= |\phi_{1}|_\zeta + \|\mu\|_{1}.
\end{equation*}
\end{definition}

\begin{definition}
Define $\mathbf{S}^\infty$ by
\begin{equation*}
	\mathbf{S}^{\infty}
	= \left\{ \mu \in \mathbf{L}^{\infty} \; ; \; \phi_{1} \in H_\zeta \right\},
\end{equation*}
and equip it with the norm
$ \|\cdot\|_{\mathbf{S}^{\infty}} : \mathbf{S}^{\infty} \to \mathbb{R} $ given by
\begin{equation*}
	\|\mu\|_{\mathbf{S}^{\infty}}
	:= |\phi_{1}|_\zeta + \|\mu\|_{\infty}.
\end{equation*}
\end{definition}

\subsubsection{Equilibrium States and the Proof of Theorem \ref{belongsss}}

\begin{definition}
	Let $F:\Sigma \longrightarrow \Sigma$ be a measurable map and $\Phi \in C^{0}(\Sigma, \mathbb{R})$ (continuous functions space), define the {\bf pressure} of $\Phi$ by
	\begin{equation*}
		\label{pmens}
		P(F,\Phi) :=  \sup_{\mu \in \mathcal{M}^{1}_{F}(\Sigma)} \left\{ h_{\mu}(F) + \int \Phi d \mu \right\},
	\end{equation*}
	where $\mathcal{M}^{1}_{F}(\Sigma)$ denotes set of $F$- invariant probabilities on $\Sigma$. We say that $\mu \in \mathcal{M}^{1}_{F}(\Sigma)$ is an {\bf equilibrium state} for $\Phi$ if  
\begin{equation*}
\label{pmensrt}
P(F,\Phi) =   h_{\mu}(F) + \int \Phi d \mu.
\end{equation*}
\end{definition}Analogously, denote by $\mathcal{M}^{1}_{f}(M)$ the set of $f$-invariant probabilities.

\begin{remark}\label{kahdhahfjsdf}
	Let $\Phi \in \mathscr{P}_\Sigma$ be a potential of the form
	$\Phi = \varphi \circ \pi_1$, where $\varphi \in \mathscr{P}_M$.
	Suppose that $\mu$ is a probability measure on $\Sigma$ and that
	$\lambda$ is a probability measure on $M$ such that
	$\pi_{1*}\mu = \lambda$.
	Then
	\begin{equation}\label{hjdhfdjfhlkjlggg}
		\int \Phi \, d\mu = \int \varphi \, d\lambda.
	\end{equation}
	Moreover, by Proposition~\ref{kjdhkskjfkjskdjf}, there is a one-to-one
	correspondence between the sets $\mathcal{M}_f^1(M)$ and
	$\mathcal{M}_F^1(\Sigma)$.
	Finally, if $F$ uniformly contracts all fibres, then by
	Abramov--Rokhlin's formula (see \cite{BC92}) and since $G$ is uniformly
	contracting along the fibres, we have
	\begin{equation}\label{pressfactor}
		\sup_{\mu \in \mathcal{M}_F^1(\Sigma)}
		\left\{ h_\mu(F) + \int \Phi \, d\mu \right\}
		=
		\sup_{\lambda \in \mathcal{M}_f^1(M)}
		\left\{ h_\lambda(f) + \int \varphi \, d\lambda \right\}.
	\end{equation}
\end{remark}

\begin{proof}[Proof of Theorem \ref{belongsss}]
For a given potential $\Phi \in \mathscr{P}_\Sigma$, let $\mu _{0}$ be the $F$-invariant measure such that $\pi _{1\ast }\mu _{0}=m$ (which do exist by Proposition \ref{kjdhkskjfkjskdjf}), where $m$ is an equilibrium state for $\varphi$ ($\Phi = \varphi \circ \pi _1$, $\varphi \in \mathscr{P}_M$), such that $m=h \nu$ where $\nu$ is a conformal measure. Suppose that $%
g:K\longrightarrow \mathbb{R}$ is a $\zeta$-H\"older function such that $%
|g|_{\infty }\leq 1$ and $H_\zeta(g)\leq 1$. Then, it holds $\left\vert \int {g} d(\mu _{0}|_{\gamma })\right\vert \leq |g|_{\infty } |h(\gamma)|\leq |h(\gamma)|$. Hence, $\mu _{0}\in \mathbf{L}^{\infty }$. Since, $\dfrac{\pi_{1*}\mu_0}{d\nu} \equiv h$, we have $\mu_0 \in \mathbf{S}^\infty$ and also $\mu_0 \in \mathbf{S}^1$.
The uniqueness follows directly from Theorems \ref{5.8} and \ref{5.9}, since the difference between two probabilities ($\mu _1 - \mu_0$) is a zero average signed measure.

Now suppose that $F$ uniformly contracts all fibres. To prove that $\mu_0$ is an equilibrium state, note first that, since $f$ is a factor of $F$, we have
\begin{equation}\label{dosif}
	h_\lambda(f) \leq h_\mu(F), \quad \forall\, \mu \in \mathcal{M}_F^1(\Sigma) \ \textnormal{such that } \pi_{1*}\mu = \lambda .
\end{equation}

Finally, by Equation~\eqref{dosif}, Remark~\ref{kahdhahfjsdf}, and the fact that $m$ is an equilibrium state for $(f,\varphi)$, we obtain
\begin{eqnarray*}
	P(F,\Phi)
	&=& \sup_{\mu \in \mathcal{M}^{1}_{F}(\Sigma)} \left\{ h_{\mu}(F) + \int \Phi \, d\mu \right\} \\
	&=& \sup_{\lambda \in \mathcal{M}^{1}_{f}(\Sigma)} \left\{ h_\lambda(f) + \int \varphi \, d\lambda \right\} \\
	&=& h_m(f) + \int \varphi \, dm \\
	&\leq& h_{\mu_0}(F) + \int \Phi \, d\mu_0 .
\end{eqnarray*}
This concludes the proof.

\end{proof}

\subsection{H\"older regularity of the invariant measures}

In this section, we assume that the fiber map $G$ satisfies the additional property (H2) stated below, and we prove that the disintegration of every invariant measure $\mu_0$, lifted from $m$, is H\"older regular.

In addition to satisfying Equation~\eqref{kdljfhkdjfkasd}, the constant $L$ appearing in assumptions (f1) and (f3) is also required to satisfy
\begin{equation}
\label{ksjdfkjfdshf}
	(\alpha \cdot L)^\zeta < 1 .
\end{equation}
This condition is not restrictive for the class of dynamics under consideration and is satisfied by all examples described in Section~\ref{dkjfhksjdhfksdf}.

\begin{enumerate}
	\item[(H2)] 
	Let $P_1, \ldots, P_{\deg(f)}$ be the partition of $M$ given by (P2). Assume that
	\begin{equation}
	\label{oityy}
		|G_i|_\zeta
		:= \sup_{y}
		\sup_{x_1, x_2 \in P_i}
		\frac{d_2\!\left(G(x_1,y), G(x_2,y)\right)}
		{d_1(x_1,x_2)^\zeta}
		< \infty .
	\end{equation}
\end{enumerate}

We denote by $|G|_\zeta$ the constant
\begin{equation}
\label{jdhfjdh}
	|G|_\zeta := \max_{i=1,\ldots,\deg(f)} |G_i|_\zeta .
\end{equation}

\begin{remark}
	The condition (H2) allows the map $G$ to be discontinuous on the sets
	$\partial P_i \times K$ for all $i=1,\dots,\deg(f)$, where $\partial P_i$
	denotes the boundary of $P_i$.
\end{remark}

We have observed that a positive measure $\mu \in \mathbf{AB}_m$ can be
disintegrated along the stable leaves $\mathcal{F}$ in such a way that it
can be viewed as a family of positive measures on $K$, denoted by
$\{\mu|_\gamma\}_{\gamma \in \mathcal{F}}$.
Since there exists a one-to-one correspondence between $\mathcal{F}$ and $M$,
this disintegration naturally defines a map from $M$ into the metric space
$\mathcal{SM}(K)$ of positive measures on $K$, endowed with the
Wasserstein--Kantorovich-like metric (see Definition~\ref{wasserstein}).

For convenience, we represent this map using functional notation and denote it by
\[
	\Gamma_\mu : M \longrightarrow \mathcal{SM}(K),
\]
which is defined $m$-almost everywhere by
\[
	\Gamma_\mu(\gamma) = \mu|_\gamma,
\]
where $(\{\mu_\gamma\}_{\gamma \in M}, \phi_1)$ is a disintegration of $\mu$.

Since such a disintegration is only defined for $m$-almost every
$\gamma \in M$, the map $\Gamma_\mu$ is not uniquely defined pointwise.
Therefore, we interpret $\Gamma_\mu$ as an equivalence class of
$m$-almost everywhere equal maps from $M$ to $\mathcal{SM}(K)$ associated
with the measure $\mu$.

\begin{definition}
	Let $\mu \in \mathbf{AB}_m$ be a positive Borel measure and let
	$\omega = (\{\mu_\gamma\}_{\gamma \in M}, \phi_1)$ be a disintegration of $\mu$,
	where $\{\mu_\gamma\}_{\gamma \in M}$ is a family of probability measures on $K$
	defined for $m$-almost every $\gamma \in M$, and $\phi_1$ is a non-negative
	marginal density on $M$, satisfying
	\[
	\pi_{1*}\mu = \phi_1 \, m.
	\]Denote by $\Gamma_\mu$ the equivalence class of maps associated with $\mu$,
	given by
	\begin{equation*}
		\Gamma_\mu := \{ \Gamma_\mu^\omega \}_{\omega},
	\end{equation*}
	where $\omega$ ranges over all possible disintegrations of $\mu$ and
	$\Gamma_\mu^\omega : M \longrightarrow \mathcal{SM}(K)$ is the map associated
	with the disintegration $\omega$, defined by
	\[
		\Gamma_\mu^\omega(\gamma)
		:= \mu|_\gamma
		= \pi _{\gamma, 2} ^\ast \phi_1(\gamma)\, \mu_\gamma .
	\]
\end{definition}

For a given representative $\Gamma_\mu^\omega$, we denote by
$I_{\Gamma_\mu^\omega} \subset M$ the subset of points $\gamma \in M$ on which
$\Gamma_\mu^\omega$ is defined.

\begin{definition}For a given $0<\zeta \leq 1$, a disintegration $\omega$ of $\mu$  and its functional representation $\Gamma_{\mu }^\omega $ we define the \textbf{$\zeta$-H\"older constant of $\mu$ associated to $\omega$} by (the essential supremum below is taken with respect to $m$)

	\begin{equation*}
    \label{Lips1}
		|\mu|_\zeta ^\omega := \esssup _{\gamma_1, \gamma_2 \in I_{\Gamma_{\mu }^\omega}} \left\{ \dfrac{||\mu|_{\gamma _1}- \mu|_{\gamma _2}||_o}{d_1 (\gamma _1, \gamma _2)^\zeta}\right\}.
	\end{equation*}
    Finally, we define the \textbf{$\zeta$-H\"older constant} of the positive measure $\mu$ by

	\begin{equation}\label{Lips2}
		|\mu|_\zeta :=\displaystyle{\inf_{ \Gamma_{\mu }^\omega \in \Gamma_{\mu } }\{|\mu|_\zeta ^\omega\}}.
	\end{equation}
	
	\label{Lips3}
\end{definition}

\begin{remark}
	When no confusion is possible, to simplify the notation, we denote $\Gamma_{\mu }^\omega (\gamma )$ just by $\mu |_{\gamma } $.
\end{remark}

\begin{definition}
	From the Definition \ref{Lips3} we define the set of the $\zeta$-H\"older positive measures $\mathcal{H} _\zeta^{+}$ as
	
	\begin{equation*}
		\mathcal{H} _\zeta^{+}=\{\mu \in \mathbf{L}^\infty:\mu \geq 0,|\mu |_\zeta <\infty \}.
	\end{equation*}
\end{definition}

For the next lemma, for a given path $\Gamma _\mu$ which represents the measure $\mu$, we define for each $\gamma \in I_{\Gamma_{\mu }^\omega }\subset M$, the map

\begin{equation*}
	\mu _F(\gamma) := \func{F_\gamma }_*\mu|_\gamma,
\end{equation*}
where $F_\gamma :K \longrightarrow K$ is defined as

\begin{equation}\label{poier}
	F_\gamma (y) = \pi_2 \circ F \circ {(\pi _2|_\gamma)} ^{-1}(y)
\end{equation}and $\pi_2 : M\times K \longrightarrow  K$ is the projection $\pi_2(x,y)=y$.

\begin{lemma}\label{apppoas}
	Suppose that $F:\Sigma \longrightarrow \Sigma$ satisfies (H1) and (H2). Then, for all $\mu \in \mathcal{H} _\zeta^{+} $ which satisfy $\phi _1 = 1$, $m$-a.e., it holds $$||\func{F}%
	_{x  \ast }\mu |_{x  } - \func{F}%
	_{y \ast }\mu |_{y  }||_o \leq \alpha^\zeta |\mu|_\zeta  d_1(x, y)^\zeta  + |G|_\zeta d_1(x, y)^\zeta ||\mu ||_\infty,$$ for all $x,y \in P_i$ and all $i=1, \cdots, \deg(f)$.
\end{lemma}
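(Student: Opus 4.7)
The plan is to apply the standard triangle-inequality decomposition
\[
\func{F}_{x\ast}\mu|_x - \func{F}_{y\ast}\mu|_y \;=\; \bigl(\func{F}_{x\ast}\mu|_x - \func{F}_{y\ast}\mu|_x\bigr) + \func{F}_{y\ast}\bigl(\mu|_x - \mu|_y\bigr),
\]
which isolates the effect of varying the fiber map (same measure, different maps) from the effect of varying the disintegration (same map, different measures). I will estimate each summand separately in the $\|\cdot\|_o$-norm and then sum the bounds.

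For the second summand, the hypothesis $\phi_1\equiv 1$ $m$-a.e.\ forces $\mu|_x$ and $\mu|_y$ to be probability measures on $K$ for a.e.\ $x,y$, so $\mu|_x-\mu|_y$ is a signed measure of total mass zero. I would then invoke the zero-mass case of Lemma~\ref{opsdas} to conclude
\[
\|\func{F}_{y\ast}(\mu|_x-\mu|_y)\|_o \;\le\; \alpha^{\zeta}\,\|\mu|_x-\mu|_y\|_o,
\]
and apply the definition of $|\mu|_\zeta$ (Definition~\ref{Lips3}) to bound the right-hand side by $\alpha^\zeta|\mu|_\zeta\, d_1(x,y)^\zeta$. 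This yields the first term of the claimed inequality.

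For the first summand, I would test against an arbitrary $g:K\to\mathbb{R}$ with $|g|_\infty\le 1$ and $H_\zeta(g)\le 1$. Since $F_x(z)=G(x,z)$ by Equation~(\ref{poier}), the pairing rewrites as
\[
\int g\, d\bigl(\func{F}_{x\ast}\mu|_x - \func{F}_{y\ast}\mu|_x\bigr) \;=\; \int \bigl(g(G(x,z))-g(G(y,z))\bigr)\, d\mu|_x(z).
\]
Hypothesis (H2), applicable because $x,y$ lie in the same piece $P_i$, supplies the uniform-in-$z$ bound $d_2(G(x,z),G(y,z))\le|G|_\zeta\, d_1(x,y)^\zeta$; combining it with the H\"older control of $g$ produces a pointwise bound on the integrand. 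Integrating against the positive measure $\mu|_x$ introduces its total-mass factor $\phi_1(x)=\|\mu|_x\|_o\le\|\mu\|_\infty$, and taking the supremum over admissible $g$ in the definition of $\|\cdot\|_o$ produces the second term $|G|_\zeta\, d_1(x,y)^\zeta\,\|\mu\|_\infty$. Summing the two estimates completes the proof.

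The hard part will be the H\"older bookkeeping in the final estimate: one must combine the $\zeta$-H\"older modulus of the test function $g$ with the $\zeta$-H\"older-in-$x$ control on $G(\cdot,z)$ from (H2) so that the cumulative dependence on $d_1(x,y)$ matches the stated exponent $\zeta$ with $|G|_\zeta$ appearing to the first power, rather than degenerating into a worse $d_1^{\zeta^2}$ dependence from naively composing moduli. Everything else is a direct assembly of Lemma~\ref{opsdas}, the definition of $|\mu|_\zeta$, and the Wasserstein--Kantorovich-like norm.
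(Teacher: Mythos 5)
Your decomposition inserts the intermediate term $\func{F}_{y\ast}\mu|_x$, whereas the paper inserts $\func{F}_{x\ast}\mu|_y$; these mirror-image splits are equivalent, and your handling of the measure-varying summand — the zero-mass case of Lemma \ref{opsdas} followed by Definition \ref{Lips3} — matches the paper's exactly.

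The unresolved point is the map-varying summand, which you correctly flag as ``the hard part'' but then do not carry out, and this is where your proposal is incomplete. As you anticipate, a literal composition of Hölder moduli gives
\[
|g(G(x,z))-g(G(y,z))| \;\le\; d_2\bigl(G(x,z),G(y,z)\bigr)^{\zeta} \;\le\; |G|_\zeta^{\,\zeta}\, d_1(x,y)^{\zeta^2},
\]
which is not the stated $|G|_\zeta\, d_1(x,y)^\zeta$. The paper's own proof avoids the $d_1^{\zeta^2}$ degeneration by writing $|g(G(x,z))-g(G(y,z))| \le d_2\bigl(G(x,z),G(y,z)\bigr)$ directly — without the exponent $\zeta$ — and then applying (H2) a single time, which produces $|G|_\zeta$ to the first power and $d_1^\zeta$. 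This is simple but treats the test function $g$ as $1$-Lipschitz rather than $\zeta$-Hölder, and when $\zeta<1$ the bound $|g(a)-g(b)|\le d_2(a,b)$ is not implied by $H_\zeta(g)\le 1$ and $|g|_\infty\le 1$ (take $K=[0,1]$, $\zeta=1/2$, $g(t)=\sqrt{t}$: then $H_{1/2}(g)\le 1$, $|g|_\infty\le 1$, yet $|g(0.01)-g(0)|=0.1>0.01$). So your caution is warranted: to obtain the statement as written you would have to adopt the paper's Lipschitz reading of the test function, and you should flag — as your proposal implicitly does — that this step is not forced by the hypotheses in the regime $\zeta<1$.
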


\begin{proof}

	Since $(\mu|_x - \mu|_y)(K)=0$ ($\phi _1 = 1$ $m$-a.e.), by Lemma \ref{opsdas}, it holds
	\begin{eqnarray*}
		||\func{F}%
		_{x  \ast }\mu |_{x  } - \func{F}%
		_{y \ast }\mu |_{y  }||_o &\leq & ||\func{F}%
		_{x  \ast }\mu |_{x  } - \func{F}%
		_{x \ast }\mu |_{y  }||_o + ||\func{F}%
		_{x  \ast }\mu |_{y  } - \func{F}%
		_{y \ast }\mu |_{y  }||_o
		\\&\leq & \alpha^\zeta||\mu |_{x  } - \mu |_{y }||_o + ||\func{F}%
		_{x  \ast }\mu |_{y  } - \func{F}%
		_{y \ast }\mu |_{y  }||_o
		\\&\leq & \alpha^\zeta |\mu|_\zeta d_1(x,y)^\zeta + ||\func{F}%
		_{x  \ast }\mu |_{y  } - \func{F}%
		_{y \ast }\mu |_{y  }||_o.
	\end{eqnarray*}Let us estimate the second summand $||\func{F}%
	_{x  \ast }\mu |_{y  } - \func{F}%
	_{y \ast }\mu |_{y  }||_o$. To do it, let $g:K \longrightarrow \mathbb{R}$ be a $\zeta$-H\"older function s.t. $H_\zeta(g), |g|_\infty \leq 1$. By Equation (\ref{poier}), we get 
	
	\begin{align*}
		\begin{split}
			\left|\int gd(\func{F}_{x\ast}\mu|_y)-\int gd(\func{F}_{y\ast}\mu|_y) \right|&=\left|\int\!{g(G(x,z))}d(\mu|_y)(z)\right.\\
			&\qquad\left.-\int\!{g(G(y,z))}d(\mu|_y)(z) \right|
		\end{split}
		\\&\leq\int{\left|G(x,z)-G(y,z)\right|}d(\mu|_y)(z)
		\\&\leq|G|_\zeta d_1(x,y)^\zeta \int{1}d(\mu|_y)(z) 
		\\&\leq|G|_\zeta d_1(x,y)^\zeta ||\mu|_y||_o.
	\end{align*}Thus, taking the supremum over $g$ and the essential supremum over $y$, we get 
	
	\begin{equation*}
		||\func{F}%
		_{x  \ast }\mu |_{y  } - \func{F}%
		_{y \ast }\mu |_{y  }||_o \leq|G|_\zeta d_1(x,y)^\zeta ||\mu||_\infty.\qedhere
	\end{equation*}
	
\end{proof}

\begin{remark}
	From now on, we denote the operator $\overline{\func {F}}_{\Phi,h}$ just by $\overline{\func {F}}_\Phi$. 
\end{remark}

For the next proposition and henceforth, for a given path $\Gamma _\mu ^\omega \in \Gamma_{ \mu }$ (associated with the disintegration $\omega = (\{\mu _\gamma\}_\gamma, \phi _1)$, of $\mu$), unless written otherwise, we consider the particular path $\Gamma_{\func{\overline{F}}_\Phi\mu} ^\omega \in \Gamma_{\func{\overline{F}}_\Phi \mu}$ defined by the Corollaries \ref{fff} and \ref{disintnorm}, by the expression

\begin{equation*}
	\Gamma_{\func{\overline{F}}_\Phi \mu} ^\omega (\gamma)=\dfrac{1}{\lambda h(\gamma)}\sum_{i=1}^{\deg(f)}{\func{F}%
		_{\gamma _i \ast }\Gamma _\mu ^\omega (\gamma_i)h(\gamma_i)e^{\varphi(\gamma_i)}}\ \ m\mathnormal{-a.e.}\ \ \gamma \in M.  \label{niceformulaaareer}
\end{equation*}
Recall that $\Gamma_{\mu} ^\omega (\gamma) = \mu|_\gamma:= \pi_{2*}(\phi_{1}(\gamma)\mu _\gamma)$ and in particular $\Gamma_{\func{\overline{F}}_\Phi\mu} ^\omega (\gamma) = (\func{\overline{F}}_\Phi\mu)|_\gamma = \pi_{2*}(\mathcal{\overline{L}}_\varphi\phi_1(\gamma)(\func{F}_\Phi\mu )_\gamma)$, where $\phi_1 = \dfrac{d \pi _{1*} \mu}{dm}$.

\begin{proposition}\label{iuaswdas}
Assume that $F\colon \Sigma \to \Sigma$ satisfies assumptions
\emph{(f1)}, \emph{(f2)}, \emph{(f3)}, \emph{(H1)}, and \emph{(H2)}, and that
\[
(\alpha \cdot L)^\zeta < 1.
\]
Then there exist constants $0<\beta<1$ and $D>0$ such that, for every
$\mu \in \mathcal{H}_\zeta^{+}$ satisfying $\phi_1=1$ $m$-almost everywhere,
and for every representative $\Gamma_\mu^\omega \in \Gamma_\mu$, the following estimate holds:
\[
\bigl|\Gamma^\omega_{\overline{F}_\Phi\mu}\bigr|_\zeta
\;\le\;
\beta\,\bigl|\Gamma_\mu^\omega\bigr|_\zeta
\;+\;
D\,\|\mu\|_\infty,
\]
where $\beta := (\alpha \cdot L)^\zeta$ and
$D := L^\zeta\bigl(\varepsilon_\varphi + |G|_\zeta\bigr).$
\end{proposition}

\begin{proof}  
	
Throughout the proof, we denote leaves by $x$ and $y$ instead of $\gamma$.
We assume that, for each $i=1,\dots,\deg(f)$, the preimages $x_i$ and $y_i$
belong to the same atom $P_i$ of the partition fixed in assumption~(H2). This holds for $m$-almost every $x,y\in M$. Moreover, by Remark~\ref{yturhfvb}, we have
\[
\frac{1}{\lambda h(x)}
\sum_{i=1}^{\deg(f)} h(x_i)e^{\varphi(x_i)}
=\overline{\mathcal{L}}_{\varphi,h}(1)(x)=1,
\quad m\text{-a.e. }x\in M.
\]We write
\begin{align*}
(\overline{F}_\Phi\mu)|_x-(\overline{F}_\Phi\mu)|_y
&=
\frac{1}{\lambda h(x)}
\sum_{i=1}^{\deg(f)} F_{x_i*}(\mu|_{x_i})\,h(x_i)e^{\varphi(x_i)}\\
&\quad-
\frac{1}{\lambda h(y)}
\sum_{i=1}^{\deg(f)} F_{y_i*}(\mu|_{y_i})\,h(y_i)e^{\varphi(y_i)}.
\end{align*}Adding and subtracting intermediate terms, we obtain
\begin{align*}
(\overline{F}_\Phi\mu)|_x-(\overline{F}_\Phi\mu)|_y
&=
\frac{1}{\lambda h(x)}
\sum_{i=1}^{\deg(f)} F_{x_i*}(\mu|_{x_i})\,h(x_i)e^{\varphi(x_i)}\\
&\quad-
\frac{1}{\lambda h(x)}
\sum_{i=1}^{\deg(f)} F_{x_i*}(\mu|_{x_i})\,h(x_i)e^{\varphi(y_i)}\\
&\quad+
\frac{1}{\lambda h(x)}
\sum_{i=1}^{\deg(f)} F_{x_i*}(\mu|_{x_i})\,h(x_i)e^{\varphi(y_i)}\\
&\quad-
\frac{1}{\lambda h(y)}
\sum_{i=1}^{\deg(f)} F_{y_i*}(\mu|_{y_i})\,h(y_i)e^{\varphi(y_i)}.
\end{align*}By Lemma~\ref{niceformulaac}, this yields
	\begin{equation}\label{skdjghdjfjkhd}
		||(\func{\overline{F}}_\Phi\mu)|_{x} - (\func{\overline{F}}_\Phi\mu)|_{y}||_o \leq \func{I_1} + \func{I_2},
	\end{equation}where
	\begin{equation*}
    \label{utrp}
		\func{I_1}:= \sum_{i=1}^{\deg(f)}{||\func{F}%
			_{x _i \ast }\mu |_{x _i}||_o \left| \dfrac{h(x_i)e^{\varphi(x_i)}}{\lambda h(x)} - \dfrac{h(y_i)e^{\varphi(y_i)}}{\lambda h(y)}\right|}
	\end{equation*}
    and
	\begin{equation*}
    \label{puipu}
		\func{I_2}:= \sum_{i=1}^{\deg(f)}{||\func{F}%
			_{x _i \ast }\mu |_{x _i } - \func{F}%
			_{y _i \ast }\mu |_{y _i }||_o\dfrac{h(y_i)e^{\varphi(y_i)}}{\lambda h(y)}}.
	\end{equation*}We first estimate $I_1$. We decompose it as
\[
\func{I}_1\le \func{I}_{1,1}+\func{I}_{1,2},
\]
where
\begin{align*}
\func{I}_{1,1}&=
\sum_{i=1}^{\deg(f)}
\|F_{x_i*}(\mu|_{x_i})\|_o
\left|
\frac{h(x_i)}{\lambda h(x)}
\bigl(e^{\varphi(x_i)}-e^{\varphi(y_i)}\bigr)
\right|,\\
\func{I}_{1,2}&=
\sum_{i=1}^{\deg(f)}
\|F_{x_i*}(\mu|_{x_i})\|_o
\frac{e^{\varphi(y_i)}}{\lambda}
\left|
\frac{h(x_i)}{h(x)}-\frac{h(y_i)}{h(y)}
\right|.
\end{align*}Using Equations (\ref{iirotyirty}), (\ref{f32}) and by Lemma \ref{niceformulaac}, we obtain
    
	\begin{eqnarray*}
		\func{I}_{1,1} &=& \sum_{i=1}^{\deg(f)}{||\func{F}%
			_{x _i \ast }\mu |_{x _i}||_o \left| \dfrac{h(x_i)e^{\varphi(x_i)}}{\lambda h(x)} - \dfrac{h(x_i)e^{\varphi(y_i)}}{\lambda h(x)}\right|}
			\\&\leq &\sum_{i=1}^{\deg(f)}{ ||\mu |_{x _i}||_o |e^{\varphi(x_i)}- e^{\varphi(y_i)}| \dfrac{h(x_i)}{\lambda h(x)} }
			\\&\leq &||\mu ||_{\infty}\sum_{i=1}^{\deg(f)}{ |e^{\varphi(x_i)}- e^{\varphi(y_i)}| \dfrac{h(x_i)}{\lambda h(x)} }
			\\&\leq &||\mu ||_{\infty}\sum_{i=1}^{\deg(f)}{ H_\zeta(e^{\varphi})d(x_i,y_i)^\zeta \dfrac{h(x_i)}{\lambda h(x)} }
			\\&\leq &||\mu ||_{\infty}\sum_{i=1}^{\deg(f)}{ \epsilon _ \varphi\inf {e^{\varphi}}L^\zeta d(x,y)^\zeta \dfrac{h(x_i)}{\lambda h(x)} }
			\\&\leq &||\mu ||_{\infty}\epsilon _ \varphi L^\zeta d(x,y)^\zeta \sum_{i=1}^{\deg(f)}{ e^{\varphi(x_i)} \dfrac{h(x_i)}{\lambda h(x)} }
			\\&=&||\mu ||_{\infty}\epsilon _ \varphi L^\zeta d(x,y)^\zeta.
	\end{eqnarray*}Since $1/h$ is $\zeta$-Hölder, there exists a constant $A_1>0$ such that
\[
\left|
\frac{h(x_i)}{h(x)}-\frac{h(y_i)}{h(y)}
\right|
\le
A_1 L^\zeta d(x,y)^\zeta.
\]Moreover, Lemma \ref{niceformulaac} yields that
	
	\begin{eqnarray*}
		\func{I}_{1,2} &=& \sum_{i=1}^{\deg(f)}{||\func{F}
			_{x_i \ast }\mu |_{x _i}||_o \left|\dfrac{h(x_i)e^{\varphi(y_i)}}{\lambda h(x)} - \dfrac{h(y_i)e^{\varphi(y_i)}}{\lambda h(y)}\right|}
			\\&\leq & \sum_{i=1}^{\deg(f)}{||\mu |_{x_i}||_o \dfrac{e^{\varphi(y_i)}}{\lambda} \left| \dfrac{h(x_i)}{h(x)} - \dfrac{h(y_i)}{h(y)}\right|}
			\\&\leq & ||\mu ||_\infty L^\zeta d(x,y)^\zeta A_1 \sum_{i=1}^{\deg(f)}{ \dfrac{e^{\varphi(y_i)}}{\lambda}}
			\\&\leq & ||\mu ||_\infty L^\zeta d(x,y)^\zeta A_1 \sum_{i=1}^{\deg(f)}{ \dfrac{h(y)h(y_i) e^{\varphi(y_i)}}{h(y)h(y_i)\lambda} }
			\\&\leq & \dfrac{\sup h}{\inf h} ||\mu ||_\infty L^\zeta  d(x,y)^\zeta A_1 \sum_{i=1}^{\deg(f)}{\dfrac{h(y_i) e^{\varphi(y_i)}}{h(y)\lambda}}
			\\&\leq & \dfrac{\sup h}{\inf h} ||\mu ||_\infty L^\zeta  d(x,y)^\zeta A_1.
	\end{eqnarray*}Combining the above estimates yields
\begin{equation}\label{glhkgf}
\func{I}_1
\le
\|\mu\|_\infty L^\zeta d(x,y)^\zeta
\left(
\varepsilon_\varphi+
A_1\frac{\sup h}{\inf h}
\right).
\end{equation}We now estimate $I_2$. Since $x_i,y_i\in P_i$, Lemma~\ref{apppoas} gives
	\begin{eqnarray*}
		\func{I_2}&=& \sum_{i=1}^{\deg(f)}{||\func{F}%
			_{x _i \ast }\mu |_{x _i } - \func{F}%
			_{y _i \ast }\mu |_{y _i }||_o\dfrac{h(y_i)e^{\varphi(y_i)}}{\lambda h(y)}}
		\\&\leq& \sum_{i=1}^{\deg(f)}{\dfrac{h(y_i)e^{\varphi(y_i)}}{\lambda h(y)}\left(\alpha ^\zeta |\mu|_\zeta d_1(x_i, y_i)^\zeta + |G|_\zeta d_1(x_i, y_i)^\zeta ||\mu |_{y_i}||_o\right)}
		\\&\leq& \left(\alpha ^\zeta |\mu|_\zeta L^\zeta d_1(x, y)^\zeta  + |G|_\zeta L^\zeta d_1(x, y)^\zeta ||\mu ||_\infty \right)\sum_{i=1}^{\deg(f)}{\dfrac{h(y_i)e^{\varphi(y_i)}}{\lambda h(y)} }
		\\&=& \alpha ^\zeta |\mu|_\zeta L^\zeta d_1(x, y)^\zeta  + |G|_\zeta L^\zeta d_1(x, y)^\zeta ||\mu ||_\infty. 
	\end{eqnarray*}Hence, 
	\begin{equation}\label{esti2}
		\func{I_2} \leq \alpha ^\zeta |\mu|_\zeta L^\zeta d_1(x, y)^\zeta  + |G|_\zeta L^\zeta d_1(x, y)^\zeta ||\mu ||_\infty.
	\end{equation}Finally, combining
\eqref{skdjghdjfjkhd}, \eqref{glhkgf}, and \eqref{esti2}, we obtain
\[
|\Gamma_{\overline{F}_\Phi\mu}^\omega|_\zeta
\le
(\alpha \cdot L)^\zeta|\Gamma_\mu^\omega|_\zeta
+
L^\zeta
\left(
\varepsilon_\varphi+
|G|_\zeta+
A_1\frac{\sup h}{\inf h}
\right)
\|\mu\|_\infty,
\]
which concludes the proof.
\end{proof}

By iterating the inequality
\[
|\Gamma_{\func{\overline{F}}_\Phi \mu}^\omega|_{\zeta}
\leq
\beta\, |\Gamma_{\mu}^\omega|_\zeta + D \|\mu\|_\infty,
\]
established in Proposition~\ref{iuaswdas}, and performing a standard inductive argument, we obtain the following consequence. Since the proof is routine, it is omitted.

\begin{corollary}\label{kjdfhkkhfdjfh}
	Suppose that $F:\Sigma \longrightarrow \Sigma$ satisfies conditions \emph{(f1)}, \emph{(f2)}, \emph{(f3)}, \emph{(H1)}, and \emph{(H2)}, and that $(\alpha L)^\zeta<1$. Then, for every $\mu \in \mathcal{H}_\zeta^{+}$ such that $\phi_1=1$ $m$-a.e., one has
	\begin{equation}\label{erkjwr}
		|\Gamma_{\func{\overline{F}}_\Phi^{\,n}\mu}^\omega|_{\zeta}
		\leq
		\beta^n |\Gamma_\mu^\omega|_\zeta
		+
		\frac{D}{1-\beta}\,\|\mu\|_\infty,
	\end{equation}
	for all $n\geq 1$, where the constants $\beta$ and $D$ are as in Proposition~\ref{iuaswdas}.
\end{corollary}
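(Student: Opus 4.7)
(Plan of proof of Corollary \ref{kjdfhkkhfdjfh}.)

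The strategy is to iterate Proposition \ref{iuaswdas} in a straightforward manner, but a preliminary observation is required to justify the repeated application: the condition ``$\phi_1 \equiv 1$ $m$-a.e.'' must be preserved along the orbit $\mu, \overline{\func F}_\Phi\mu, \overline{\func F}_\Phi^2\mu,\dots$ First I would record that, from Definition \ref{fphih} and Remark \ref{yturhfvb}, the marginal density of $\overline{\func F}_\Phi\mu$ with respect to $m$ equals $\overline{\mathcal{L}}_{\varphi,h}(\phi_1)$, and that $\overline{\mathcal{L}}_{\varphi,h}(1)=1$. Hence $\pi_{1*}\mu = m$ implies $\pi_{1*}\overline{\func F}_\Phi\mu = m$, so by induction $\pi_{1*}\overline{\func F}_\Phi^k\mu = m$ for every $k\geq 0$. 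In particular Proposition \ref{iuaswdas} is legitimately applicable to each iterate.

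Next, fix a disintegration $\omega$ of $\mu$ and propagate it naturally to each iterate via Equation (\ref{niceformulaaareer}); denote the resulting path simply by $|\Gamma_{\overline{\func F}_\Phi^k\mu}^\omega|_\zeta$. Applying Proposition \ref{iuaswdas} to $\overline{\func F}_\Phi^{k-1}\mu$ gives
\begin{equation*}
|\Gamma_{\overline{\func F}_\Phi^k\mu}^\omega|_\zeta \leq \beta\,|\Gamma_{\overline{\func F}_\Phi^{k-1}\mu}^\omega|_\zeta + D\,\|\overline{\func F}_\Phi^{k-1}\mu\|_\infty.
\end{equation*}
By Proposition \ref{kdjfjksdkfhjsdfk}, $\overline{\func F}_\Phi$ is a weak contraction on $\mathcal{L}^\infty$, so $\|\overline{\func F}_\Phi^{k-1}\mu\|_\infty \leq \|\mu\|_\infty$ for every $k\geq 1$.

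Finally, iterating the above inequality from $k=1$ to $k=n$ and summing the geometric series, one obtains
\begin{equation*}
|\Gamma_{\overline{\func F}_\Phi^n\mu}^\omega|_\zeta \leq \beta^n|\Gamma_\mu^\omega|_\zeta + D\,\|\mu\|_\infty\sum_{k=0}^{n-1}\beta^k \leq \beta^n|\Gamma_\mu^\omega|_\zeta + \frac{D}{1-\beta}\|\mu\|_\infty,
\end{equation*}
which is the claimed bound. The only conceptual point requiring care is the preservation of the unit-marginal hypothesis under $\overline{\func F}_\Phi$, everything else is a routine induction and a geometric sum; in particular there is no genuinely hard step here once Proposition \ref{iuaswdas} is in hand.
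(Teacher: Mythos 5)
Your proof is correct and takes exactly the approach the paper intends: the paper explicitly omits the proof, stating only that the corollary follows "by iterating the inequality ... obtained in Proposition \ref{iuaswdas}, along with a standard computation." Your verification that the marginal-density condition $\phi_1\equiv 1$ is preserved under $\func{\overline{F}}_\Phi$ (via $\overline{\mathcal{L}}_{\varphi,h}(1)=1$), combined with the weak contraction $\|\func{\overline{F}}_\Phi^{k}\mu\|_\infty\leq\|\mu\|_\infty$ from Proposition \ref{kdjfjksdkfhjsdfk} and the geometric sum, is precisely that standard computation spelled out carefully.
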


\begin{remark}\label{kjedhkfjhksjdf}
	Taking the infimum over all paths $\Gamma_{ \mu } ^\omega  \in \Gamma_{ \mu }$ and all $\Gamma_{\func{\overline{F}}_\Phi^n\mu}^\omega  \in \Gamma_{\func{\overline{F}}_\Phi^n\mu}$ on both sides of inequality (\ref{erkjwr}), we get 
	
	\begin{equation}\label{fljghlfjdgkdg}
		|\func{\overline{F}}_\Phi ^n\mu|_{\zeta}  \leq \beta^n |\mu|_\zeta + \dfrac{D}{1-\beta}||\mu||_\infty. 
	\end{equation}The above Equation (\ref{fljghlfjdgkdg}) will give a uniform bound (see the proof of Theorem \ref{riirorpdf}) for the H\"older's constant of the measure $\func{\overline{F}}_\Phi^{n} m_1$, for all $n$. Where $m_1$ is defined as the product $m_1=m \times m_2$, for a fixed probability measure $m_2$ on $K$. The uniform bound will be useful later on (see Theorem \ref{disisisii}).
	
\end{remark}

\begin{remark}\label{riirorpdf}
	Consider the probability measure $m_1$ defined in Remark \ref{kjedhkfjhksjdf}, i.e., $m_1=m \times m_2$, where $m_2$ is a given probability measure on $K$ and $\nu$ is the conformal measure fixed in the subsection \ref{hf}. Besides that, consider its trivial disintegration $\omega_0 =(\{m_{1 \gamma}  \}_{\gamma}, \phi_1)$, given by $m_{1 \gamma} = \func{\pi _{2,\gamma}^{-1}{_*}}m_2$, for all $\gamma$ and $\phi _1 \equiv 1$. According to this definition, it holds that 
	\begin{equation*}
		m_1|_\gamma = m_2, \ \ \forall \ \gamma.
	\end{equation*}In other words, the path $\Gamma ^{\omega _0}_{m_1}$ is constant: $\Gamma ^{\omega _0}_{m_1} (\gamma)= m_2$ for all $\gamma$. Hence, $m_1 \in \mathcal{H} _\zeta^{+}$.  Moreover, for each $n \in \mathbb{N}$, let $\omega_n$ be the particular disintegration of the measure $\func{\overline{F}}_\Phi^nm_1$ defined from $\omega_0$ as an application of Corollary \ref{disintnorm} and, by a simple induction, consider the path $\Gamma^{\omega_{n}}_{\func{\overline{F}}_\Phi^n m_1}$ associated with $\omega _n$. This path will be used in the proof of the next proposition.

\end{remark}

For the next result, we recall that, by Theorem \ref{belongsss}, $F$ has a unique invariant measure $\mu _{0}\in \mathbf{S}^{\infty }$. We will prove that $\mu_0$ has a regular disintegration in a way that $\mu _0 \in \mathcal{H}_\zeta^+$ (similar results are presented in \cite{GLu} and \cite{LiLu}, for others sort of systems). This will be used to prove exponential decay of correlations over the set of $\zeta$-H\"older functions.

\begin{proof}[Proof of Theorem \ref{regg}]

	Consider the path $\Gamma^{\omega_n}_{\func{\overline{F}}_\Phi^n}m_1$, defined in Remark \ref{riirorpdf},  which represents the measure $\func{\overline{F}}_\Phi^nm_1$.

	
	According to Theorem \ref{belongsss}, let $\mu _{0}\in \mathbf{S}^\infty$ be the unique $F$-invariant probability measure in $\mathbf{S}^\infty$. Consider the measure $m_1$, defined in Remark \ref{riirorpdf} and its iterates $\func{\overline{F}}_\Phi^n(m_1)$. By Theorem \ref{spgapp}, these iterates converge to $\mu _{0}$
	in $\mathbf{L}^{\infty }$. It implies that the sequence $\{\Gamma_{\func{F{_\ast }}^n(m_1)} ^{\omega _n}\}_{n}$ converges $m_1$-a.e. to $\Gamma_{\mu _{0}}^\omega\in \Gamma_{\mu_0 }$ (in $\mathcal{SB}(K)$ with respect to the metric defined in Definition \ref{wasserstein}), where $\Gamma_{\mu _{0}}^\omega$ is a path given by the Rokhlin Disintegration
	Theorem and $\{\Gamma_{\func{\overline{F}}_\Phi^n(m_1)} ^{\omega_n}\}_{n}$ is defined in Remark (\ref{riirorpdf}). It implies that $\{\Gamma_{\func{\overline{F}}_\Phi^n(m_1)} ^{\omega_n}\}_{n}$ converges pointwise to $\Gamma_{\mu _{0}}^\omega$ on a full measure set $\widehat{M}\subset M$. Let us denote $%
	\Gamma_{n}:=\Gamma^{\omega_n}_{\func{\overline{F}}_\Phi^n(m_1)}|_{%
		\widehat{M}}$ and $\Gamma:=\Gamma^\omega _{\mu _{0}}|_{\widehat{M}}$. Since $\{\Gamma_{n} \}_n $ converges pointwise to $\Gamma$, it holds $|\Gamma_{n}|_\zeta \longrightarrow |\Gamma|_\zeta$ as $n \rightarrow \infty$. Indeed, let $x,y \in \widehat{M}$. Then,
	
	\begin{eqnarray*}
		\lim _{n \longrightarrow \infty} {\dfrac{||\Gamma_n (x) - \Gamma _n(y)||_W}{d_1(x,y)^\zeta}} &= & \dfrac{||\Gamma (x) - \Gamma (y)||_W}{d_1(x,y)^\zeta}.
	\end{eqnarray*} On the other hand, by Corollary \ref{kjdfhkkhfdjfh}, the argument of the left hand side is bounded by $|\Gamma_n|_\zeta \leq  \dfrac{D}{1-\beta}$  for all $n\geq 1$. Then, 
	\begin{eqnarray*}
		\dfrac{||\Gamma (x) - \Gamma (y)||_W}{d_1(x,y)^\zeta}&\leq & \dfrac{D}{1-\beta}.
	\end{eqnarray*} Thus, $|\Gamma^\omega_{\mu _0}|_\zeta \leq\dfrac{D}{1-\beta}$ and taking the infimum we get $|\mu _0|_\zeta \leq\dfrac{D}{1-\beta}$.

\end{proof}

\subsection{Exponential Decay of Correlations}\label{jhdfjhdf}

In Section~\ref{dfjgsghdfjasdf}, we established exponential decay of correlations for observables 
$g_1 \in B'_i$ ($i \in \{1, \infty\}$) and $g_2 \in \Theta_{\mu_0}^i$ ($i \in \{1, \infty\}$) 
(see Theorems~\ref{slkdgjsdg} and \ref{shkjfjdhsf}). 
We now investigate these sets further, showing that they contain the space of H\"older functions. 
These results follow from the regularity of the disintegration of the $F$-invariant measures proved in the previous section, 
together with the properties of the norm $||\cdot||_o$.

Throughout, $\ho_\zeta(\Sigma)$ denotes the space of $\zeta$-H\"older functions on $\Sigma$, 
and $H_\zeta$ denotes the space of H\"older functions on $M$. Moreover, for any function $g$, we define
\[
|g|_\zeta := H_\zeta(g) + |g|_\infty.
\]

\begin{proof}[Proof of Theorem \ref{disisisii}]
First, we prove that $\ho_\zeta(\Sigma) \subset B'_1$ and $\ho_\zeta(\Sigma) \subset B'_\infty$. 
Suppose that $\mu \in \mathbf{S}^1$. For a given function $g \in \ho_\zeta(\Sigma)$, denote 
\[
g|_\gamma = g_\gamma \circ \pi_{2,\gamma}^{-1},
\] 
where $g_\gamma$ is the restriction of $g$ to the leaf $\gamma$. Then we have
\begin{eqnarray*}
\int g \, d\mu &=& \int_M \int_K g|_\gamma \, d\mu|_\gamma \, d\nu
\\&=& \int_M \left|\int_K g|_\gamma \, d\mu|_\gamma \right| d\nu
\\&\leq& \int_M \|\mu|_\gamma\|_o \, |g|_\zeta \, d\nu
\\&=& \|\mu\|_1 \, |g|_\zeta
\\&\leq& \|\mu\|_{S_1} \, K_3,
\end{eqnarray*}
where $K_3 := |g|_\zeta$. Hence, $\ho_\zeta(\Sigma) \subset B'_1$.  The proof that $\ho_\zeta(\Sigma) \subset B'_\infty$ follows by an analogous argument.

To prove the remained inclusions, suppose that $s \in \ho_\zeta(\Sigma)$.	Let $(\{\mu_{0, \gamma}\}_\gamma, h)$ be the disintegration of $\mu _0$ and denote by $\kappa$ the measure $\kappa:=s\mu_0$ ($ s\mu_0(E) := \int _E {s}d\mu _0$). If $(\{\kappa_{ \gamma}\}_\gamma, \widehat{\kappa} )$ is the disintegration of $\kappa$, then it holds $\widehat{\kappa} \ll \nu$ and $\kappa _\gamma \ll \mu_{0, \gamma}$ (see Lemma \ref{hdgfghddsfg}). Moreover, denoting $\overline{s}:=\dfrac{d\widehat{\kappa}}{d\nu}$ as (\ref{fjgh}) and
	\begin{equation*}
		\dfrac{d\kappa _\gamma}{d\mu_{0, \gamma}} (y) =
		\begin{cases}
			\dfrac{s(\gamma,y) }{\overline{s}(\gamma)} ,& \ \ \textnormal{if} \ \  	\overline{s}(\gamma) \neq 0 \\
			& \\
			\quad  0,&\ \ \textnormal{if} \ \  	\overline{s}(\gamma) = 0.
		\end{cases}
	\end{equation*}Moreover, denoting 
\[
\overline{s} := \frac{d\widehat{\kappa}}{d\nu} \quad \text{as in (\ref{fjgh})},
\] 
we have
\begin{equation*}
\frac{d\kappa_\gamma}{d\mu_{0,\gamma}}(y) =
\begin{cases}
\dfrac{s(\gamma,y)}{\overline{s}(\gamma)}, & \text{if } \overline{s}(\gamma) \neq 0,\\[1mm]
0, & \text{if } \overline{s}(\gamma) = 0.
\end{cases}
\end{equation*}It is immediate that $\kappa \in \mathbf{L}^\infty$, and consequently $\kappa \in \mathbf{L}^1$. 
Let us check that $\overline{s} \in H_\zeta$ by estimating its H\"older constant. 
For $\gamma_1, \gamma_2 \in M$, we have
\begin{eqnarray*}
|\overline{s}(\gamma_2) - \overline{s}(\gamma_1)| 
&=& \left|\int_{K} s(\gamma_2, y) \, d(\mu_0|_{\gamma_2}) - \int_{K} s(\gamma_1, y) \, d(\mu_0|_{\gamma_1}) \right| \\
&\leq& \left|\int_{K} s(\gamma_2, y) \, d(\mu_0|_{\gamma_2} - \mu_0|_{\gamma_1}) \right|
\\&+& \left|\int_{K} (s(\gamma_2, y) - s(\gamma_1, y)) \, d(\mu_0|_{\gamma_1}) \right| \\
&\leq& |s|_\zeta \, \|\mu_0|_{\gamma_2} - \mu_0|_{\gamma_1}\|_W
+ H_\zeta(s) \, d_1(\gamma_2, \gamma_1)^\zeta \, |\phi_1|_\infty \\
&\leq& |s|_\zeta \, |\mu_0|_\zeta \, d_1(\gamma_2, \gamma_1)^\zeta 
+ H_\zeta(s) \, |\phi_1|_\infty \, d_1(\gamma_2, \gamma_1)^\zeta.
\end{eqnarray*}

Thus, $\overline{s} \in H_\zeta$. 
Hence, $\ho_\zeta(\Sigma) \subset \Theta^\infty_{\mu_0}$ and $\ho_\zeta(\Sigma) \subset \Theta^1_{\mu_0}$.

\end{proof}

\subsection{Extending the set of potentials and proving Theorem \ref{S}}

The following proposition is taken from \cite{RA3} (more precisely, it is part
of Proposition~5.1 therein). We will use it in the proof of Theorem~\ref{S}.

We say that two potentials $\bar{\Phi}, \Phi : M \times K \to \mathbb{R}$ are
\emph{co-homologous} if there exists a continuous function
$u : M \times K \to \mathbb{R}$ such that
\[
\Phi = \bar{\Phi} - u + u \circ F.
\]

\begin{proposition}\label{homologo}
Let $\bar{\Phi} : M \times K \to \mathbb{R}$ be a H\"older continuous potential.
Then there exists a H\"older continuous potential
$\Phi : M \times K \to \mathbb{R}$, independent of the stable direction, such
that:
\begin{enumerate}
	\item $\Phi$ is co-homologous to $\bar{\Phi}$;
	\item $P(F,\bar{\Phi}) = P(F,\Phi)$;
	\item $(F,\Phi)$ and $(F,\bar{\Phi})$ have the same equilibrium states.
\end{enumerate}
\end{proposition}

\begin{proof}[Proof of Theorem~\ref{S}]
By Equation~(8) in \cite{RA3}, if $F$ is continuous, satisfies
\eqref{iurytea} for some $y_0 \in K$, and
$\bar{\Phi} : M \times K \to \mathbb{R}$ is H\"older continuous, then the
fibre-constant potential $\Phi$ defined by
\[
\Phi(x,y) := \bar{\Phi}(x,y_0), \qquad \forall (x,y) \in M \times K,
\]
is co-homologous to $\bar{\Phi}$. This is precisely the potential whose existence
is guaranteed by Proposition~\ref{homologo}. In particular, the pairs
$(F,\Phi)$ and $(F,\bar{\Phi})$ have the same equilibrium states.

Moreover, $\Phi$ can be written as
\[
\Phi = \bar{\varphi}_{y_0} \circ \pi_1,
\]
where $\bar{\varphi}_{y_0} : M \to \mathbb{R}$ is defined by
\[
\bar{\varphi}_{y_0}(x) := \bar{\Phi}(x,y_0), \qquad \forall x \in M.
\]
Therefore, if $\bar{\varphi}_{y_0} \in \mathscr{P}_M$, all conclusions of the theorem
follow, and the proof is complete.
\end{proof}

\begin{remark}
	If the fibre $K$ can be decomposed as a finitely union $K=K_1\cup \cdots \cup K_n$ of pairwise disjoint compact sets $K_1, \cdots, K_n$ then the condition $G(x, y_0)=y_0$ for all $x\in M$ in Theorem \ref{S} can be replaced by $G_i(x, y_i)=y_i$ for all $x\in M$ and some $y_i\in K_i$, $i=1, \cdots, n$.
	In fact, since $M\times K$ is a product space and $K=K_1\cup \cdots \cup K_n$ we may define $n$ fibre dynamics $G_i:M\times K_i\to K_i$ by $G_i(x, y)=G(x, y)$ when $y\in K_i$, for each $i=1,\cdots, n$. Further details can be found in \cite{RA3}. 
\end{remark}

\subsection{Examples}\label{dkjfhksjdhfksdf}

In this section, we present several examples illustrating the applicability of the results obtained in this work.  
In some cases, namely, Examples~\ref{new}, \ref{sesprowerpo}, \ref{sesprowerpoo}, \ref{uruitruytidjf}, and \ref{poi}, we analyze either the base dynamics $f$ or the fibre dynamics $G$ separately, without explicitly constructing the full skew-product $(f,G)$. By this, we mean that any combination of such base and fibre dynamics gives rise to a skew-product to which all Theorems~A through~J apply. On the other hand, Example~\ref{tsujii} considers a complete skew-product structure where Theorems~A through~J are directly applicable. Example~\ref{ferradura} corresponds to a continuous dynamics scenario with uniform contraction along all fibres while preserving a fixed horizontal fibre. Consequently, Theorem~\ref{S} applies in this setting.

\begin{example} \label{new}
	
	Let $f_0: M \longrightarrow M$ be a map defined by $f_0(x,y)=(\id(x),3y\mod1)$, where $M:=[0,1]^2$ is endowed with the $\mathbb{T}^2$ topology and $\id$ is the identity map on $[0,1]$. On $[0,1]^2$, we consider the metric $$d_1((x_0, y_0), (x_1,y_1)) = \max {d(x_0,y_0), d(x_1,y_1)},$$ where $d$ is the metric of $[0,1]$. This system has $(0,0) = (1,1)$ and all points of the horizontal segment $[0,1]\times \{1/2\}$ as fixed points.
	
	Consider the partition $P_0= [0,1/3] \times [0,1]$, $P_1= [1/3,2/3] \times [0,1]$, and $P_2= [2/3,1]\times[0,1]$. In particular, the fixed point $p_0=(1/2,1/2) \in \mathcal{A}:= \inte P_1$ (where $\inte P_1$ means the interior of $P_1$).
	
	For a given $\delta>0$, consider a perturbation $f$ of $f_0$, given by $f(x,y)=(g(x), 3y\mod1)$ such that $g(1/2)=1/2$, $0< g'(1/2) <1$ and $g$ is $\delta$-close to $\id(x)=x$ in the $C^2$ topology. Moreover, suppose that $|g'(x)| \geq k_0 > 1$ for all $x \in P_0 \cup P_2$. In particular, without loss of generality, suppose that $1 - \delta < g'(1/2) < 1+ \delta <3$ and $\deg(g)=1$. Below, the reader can see the graph of such a function $g$.

	\begin{figure}[htp]
		\begin{center}
			\includegraphics[width=0.5\textwidth]{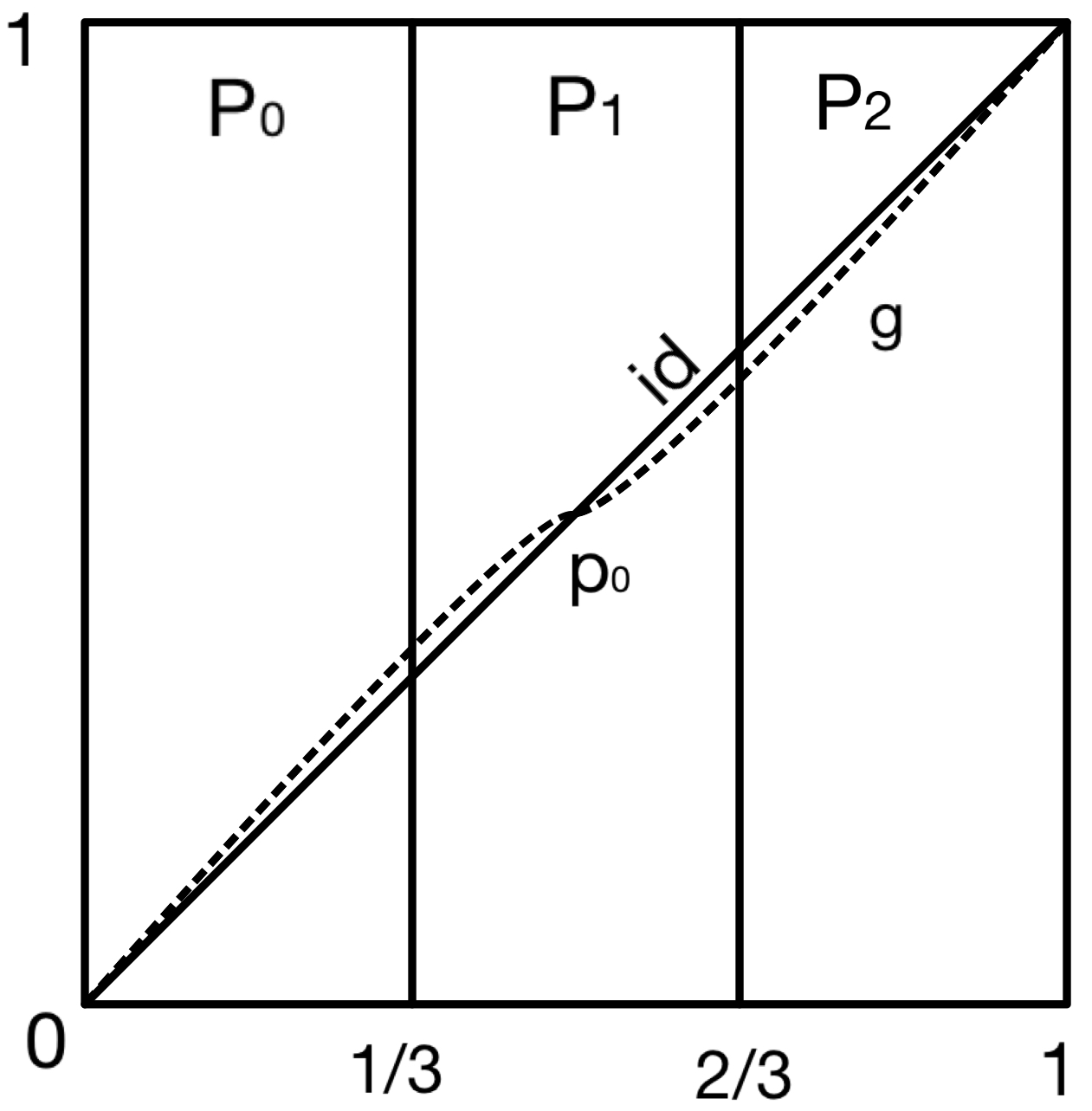}
			\caption{The graph of the perturbed map $g$}\label{AIMS}
		\end{center}
	\end{figure}Thus, we have
	$$
	Df (1/2, 1/2)=\left( \begin{array}{ll}
		g'(1/2) & 0 \\
		0 & 3
	\end{array}\right).
	$$And since $p_0=(1/2,1/2)$ is still a fixed for $f$ we have that $p_0$ becomes a saddle point (for $f$) as in the next Example \ref{sesprowerpo}. Moreover, since $\deg(g)=1$, we have that $\deg(f_0)=\deg(f)=3$, $q=1$, $\sigma = 3$, $L(x,y) := 1/ g'(x), \forall (x,y) \in \mathbb{T}^2$. In general, we have the following expression for the derivative of $f$:
	$$
	Df(x,y)= \left( \begin{array}{ll}
		g'(x) & 0 \\
		0 & 3
	\end{array}\right),
	$$ for all $(x,y) \in [0,1]^2$. This expression ensures that $\rho$ is $\zeta$-Holder for $0< \zeta \leq 1$. Therefore, for every $\epsilon>0$, there exists $\delta>0$ such that $L(x,y) \in (1-\epsilon, 1+\epsilon)$ for all $(x,y) \in [0,1]^2$, so that $L_1$ can be defined as $L_1 := 1 + \epsilon$.
	
	Since $g:[0,1] \longrightarrow [0,1]$ is $\delta$-close to $\id:[0,1] \longrightarrow [0,1]$, we have that $1-\delta < g'(x) < 1+\delta$ for all $x \in [0,1]$ and $3(1-\delta) < \det Df(x,y) < 3(1+\delta)$ for all $(x,y) \in [0,1]^2$. Thus, 
	\begin{eqnarray*}
		\sup _{(x,y)} \log \dfrac{1}{\det Df (x,y)} - \inf _{(x,y)} \log \dfrac{1}{\det Df (x,y)} &\leq& \log \dfrac{1}{3(1-\delta)} - \log \dfrac{1}{3(1+\delta)} \\&=& \log \dfrac{(1+\delta)}{(1-\delta)}.
	\end{eqnarray*}
	Therefore, it holds
	\begin{equation}\label{dfdfds}
		\sup _{(x,y) \in [0,1]^2} \log \dfrac{1}{\det Df (x,y)} - \inf _{(x,y) \in [0,1]^2} \log \dfrac{1}{\det Df (x,y)} \leq \log \dfrac{(1+\delta)}{(1-\delta)}.
	\end{equation}Note that, since $\e ^{\epsilon _\rho} \approx 1$, $L_1 \approx 1$, $0<\zeta \leq 1$ and $q(L_1^\zeta[1+(L_1-1)^\zeta]) \approx 1$ we have that 
	\begin{equation*}
		\e ^{\epsilon_\rho} \cdot \left( \dfrac{(\deg(f) - q)\sigma ^{-\zeta} + qL_1^\zeta[1+(L_1-1)^\zeta] }{\deg(f)}\right) \approx \dfrac{2(3^{-\zeta}) + 1}{3} < 1.
	\end{equation*}The above relation shows that the system satisfies (\ref{kdljfhkdjfkasd}).
	
	Now we will prove that this system satisfies (\ref{f32}) of (f3). Note that,
	\begin{equation*}
		\rho (x,y) := \dfrac{1}{|\det Df (x,y)|} = \dfrac{1}{3g'(x)}. 
	\end{equation*}Besides that,  since $g$ is $\delta$-close to $\id$ in the $C^2$ topology, we have 
	\begin{equation*}
		-\delta \leq g(x) -x \leq \delta,
	\end{equation*}
	\begin{equation*}
		1-\delta \leq g'(x) \leq1+ \delta
	\end{equation*}and
	\begin{equation*}
		-\delta \leq g''(x) \leq \delta.
	\end{equation*}In what follows, the point $x_2$ is obtained by an application of the Mean Value Theorem. Then, we have
	
	\begin{eqnarray*}
		\dfrac{\left| \dfrac{1}{\left| \det Df(x_0,y_0)\right|}- \dfrac{1}{\left| \det Df(x_1,y_1)\right|}\right|}{d_2((x_0,y_0), (x_1,y_1))^\zeta}&=&\dfrac{\left| \dfrac{1}{\left| 3g'(x_0)\right|}- \dfrac{1}{\left|3g'(x_1)\right|}\right|}{\max \{d_1(x_0,x_1), d_1(y_0,y_1)\}^\zeta}\\&\leq&\dfrac{1}{3}\dfrac{\left| g'(x_1)-g'(x_0)\right|}{d_1(x_0,x_1)^\zeta}\dfrac{1}{|g'(x_1)g'(x_0)|}\\&\leq&\dfrac{1}{3}\left| g''(x_2)\right|\dfrac{1}{|g'(x_1)g'(x_0)|}\\&\leq&\dfrac{1}{3}\delta \dfrac{1}{(1-\delta)^2}=\dfrac{1}{3}\sqrt{\delta}\sqrt{\delta} \dfrac{1}{(1-\delta)^2}\\&\leq&\dfrac{1}{3}\dfrac{1}{1+\delta}\sqrt{\delta} \dfrac{1}{(1-\delta)^2}; \ \text{for small} \ \delta \\&\leq&\dfrac{\sqrt{\delta}}{(1-\delta)^2} \inf_{x\in [0,1]} \dfrac{1}{3g'(x)}.
	\end{eqnarray*}Thus,
	
	\begin{equation}\label{jhsdgjfsa}
		H_\zeta (\rho) \leq \dfrac{\sqrt{\delta}}{(1-\delta)^2} \inf_{x\in [0,1]} \rho.
	\end{equation}If $\delta$ is small enough, by Equations (\ref{dfdfds}) and (\ref{jhsdgjfsa}), the perturbed system satisfies (\ref{f31}) and (\ref{f32}) of (f3).

	We emphasize that this example satisfies the hypotheses of both articles, \cite{VAC} and \cite{VM}. More precisely, it satisfies (H1), (H2) and (P) of \cite{VAC} and (H1), (H2) and (P) of \cite{VM}. In fact, by the Variational Principle, we have that $h(f)>0$.  
\end{example}

\begin{example}\label{sesprowerpo}
We present a general strategy to construct examples by perturbing either the identity map or an expanding map near the identity.

Let $f_0 \colon \mathbb{T}^d \to \mathbb{T}^d$ be an expanding map. 
Choose a finite covering $\mathcal{P}$ of $\mathbb{T}^d$ and an element $P_1 \in \mathcal{P}$ containing a periodic point $p$ (possibly a fixed point).
We then define a perturbation $f$ of $f_0$ inside $P_1$, using a pitchfork bifurcation, so that $p$ becomes a saddle point for $f$.

By construction, $f$ coincides with $f_0$ on $P_1^{c}$, where uniform expansion is preserved.
The perturbation can be chosen so as to satisfy condition~(f1), ensuring that $f$ is not excessively contracting inside $P_1$ and that it remains topologically mixing.

It is worth noting that a perturbation with these properties does not always exist.
However, whenever such a perturbation can be carried out, condition~(f3) is also satisfied.
In this case, the invariant measure $m$ is absolutely continuous with respect to the Lebesgue measure, which is expanding, conformal, and positive on open sets.
As a consequence, the system admits no periodic attractors.
\end{example}

\begin{example}\label{sesprowerpoo}
Assume, in the setting of the previous example, that $f_0$ is diagonalizable, with eigenvalues
\[
1 < 1+a < \lambda,
\]
associated respectively with the eigenvectors $e_1$ and $e_2$, and that $x_0$ is a fixed point of $f_0$.
Fix parameters $a,\varepsilon>0$ satisfying
$
\log\!\left(\frac{1+a}{1-a}\right)<\varepsilon$ and $$\e^{\varepsilon}\!\left(
\frac{(\deg f_0-1)(1+a)^{-\zeta}
+(1/(1-a))^{\zeta}\bigl[1+(a/(1-a))^{\zeta}\bigr]}
{\deg f_0}
\right)<1.$$Note that any smaller choice of $a>0$ still satisfies these inequalities.

Let $\mathcal U$ be a finite covering of $M$ by open domains of injectivity for $f_0$.
By refining the covering if necessary, we may assume that $x_0=(m_0,n_0)$ belongs to a unique element $U\in\mathcal U$.
Choose $r>0$ sufficiently small so that $B_{2r}(x_0)\subset U$.

Define $\rho=\eta_r\ast g$, where $\eta_r(z)=r^{-2}\eta(z/r)$, $\eta$ is the standard mollifier, and
\[
g(m,n)=
\begin{cases}
\lambda(1-a), & \text{if }(m,n)\in B_r(x_0),\\[2mm]
\lambda(1+a), & \text{otherwise}.
\end{cases}
\]
Finally, define a perturbation $f$ of $f_0$ by
\[
f(m,n)=\bigl(m_0+\lambda(m-m_0),\;
n_0+(\rho(m,n)/\lambda)(n-n_0)\bigr).
\]

With this construction, $x_0$ is a saddle point of $f$.
Moreover, the required assumptions are satisfied with
\[
\mathcal A=B_{2r}(x_0), \qquad L_1=\frac{1}{1-a}, \qquad \sigma=1+2a.
\]
The only non-trivial condition to verify is~(f3).

To this end, observe that
\[
\rho(x)-\rho(y)
=\int_{S}\frac{2a}{\lambda(1-a^2)}\eta_r(z)\,dz
-\int_{S'}\frac{2a}{\lambda(1-a^2)}\eta_r(z)\,dz,
\]
where
\[
S=\{z\in\mathbb{R}^2:\ x-z\in B_r(x_0),\ y-z\notin B_r(x_0)\},
\]
and
\[
S'=\{z\in\mathbb{R}^2:\ y-z\in B_r(x_0),\ x-z\notin B_r(x_0)\}.
\]
Let $x,y\in\mathbb{R}^2$ and write $|x-y|=qr$.
Define the annulus
\[
A_q=\{z\in\mathbb{R}^2:\ 1-q<|z|<1\}.
\]
Then
\[
\frac{|\rho(x)-\rho(y)|}{|x-y|^{\zeta}}
\le
\frac{2a\,\eta_r(S)}{\lambda(1-a^2)\,q^{\zeta}r^{\zeta}}
\le
\frac{2a\,\eta(A_q)/q^{\zeta}}{\lambda(1-a^2)}.
\]
Since
\[
N:=\sup_{q>0}\frac{\eta(A_q)}{q^{\zeta}}<\infty,
\]
we may choose $a>0$ sufficiently small so that
\[
\frac{2aN}{1-a}<\varepsilon.
\]
Consequently, $H_{\zeta}(\rho)<\varepsilon\,\inf\rho$, and condition~(f3) follows.
\end{example}

\begin{example}[Manneville--Pomeau map]\label{uruitruytidjf}
Let $\alpha\in(0,1)$ and consider the $C^{1+\alpha}$ local diffeomorphism
$f_\alpha\colon[0,1]\to[0,1]$ defined by
\[
f_{\alpha}(x)=
\begin{cases}
x\bigl(1+2^{\alpha}x^{\alpha}\bigr), & \text{if } 0\le x\le \tfrac12,\\[2mm]
2x-1, & \text{if } \tfrac12<x\le1.
\end{cases}
\]

Conditions~(f1) and~(f2) are clearly satisfied.
Moreover, by explicitly computing the two inverse branches of $f_\alpha$, one checks that $L=1$.

To verify condition~(f3), consider the family of potentials
$\{\varphi_{\alpha,t}\}_{t\in(-t_0,t_0)}$, defined by
\[
\varphi_{\alpha,t}=-t\log|Df_\alpha|,
\]
for $t_0>0$ sufficiently small.
This defines a family of $C^\alpha$ potentials such that
\[
\{\varphi_{\alpha,t}\}_{t\in(-t_0,t_0)}\subset\mathscr{P}_M
\quad\text{for each }\alpha\in(0,1),
\]
as we now verify.

Indeed, for any $x,y\in[0,1]$,
\begin{eqnarray*}
|\varphi_{\alpha,t}(x)-\varphi_{\alpha,t}(y)|
&=& |t|\bigl|\log|Df_\alpha(x)|-\log|Df_\alpha(y)|\bigr|\\
&=& |t|\left|\log\frac{|Df_\alpha(x)|}{|Df_\alpha(y)|}\right|.
\end{eqnarray*}
Since $|Df_\alpha|$ is bounded away from zero and infinity on $(0,1]$ and has at most polynomial growth near $x=0$, we obtain the uniform bound
\[
\left|\log\frac{|Df_\alpha(x)|}{|Df_\alpha(y)|}\right|
\le \log(2+\alpha),
\]
which yields
\[
|\varphi_{\alpha,t}(x)-\varphi_{\alpha,t}(y)|
\le |t|\log(2+\alpha).
\]
Consequently, for $|t|$ small enough, the Hölder constant of $\varphi_{\alpha,t}$
satisfies the required bound in~(f3), completing the verification.
\end{example}

\begin{example}[Discontinuous maps]\label{poi}
Fix real numbers $\alpha_1$ and $\alpha_2$ such that
\[
0 \le \alpha_1 < \alpha_2 < 1.
\]
Define $G\colon[0,1]\times[0,1]\to[0,1]$ by
\[
G(x,y)=
\begin{cases}
\alpha_1 y, & \text{if } 0 \le x \le \tfrac12,\\[2mm]
\alpha_2 y, & \text{if } \tfrac12 < x \le 1.
\end{cases}
\]

Clearly, $G$ is discontinuous along the set $\{\tfrac12\}\times[0,1]$.
Nevertheless, $G$ satisfies condition~(H2), since for every $\zeta>0$ we have
\[
|G|_\zeta=0
\]
(see Equation~\eqref{jdhfjdh}), as $G$ does not depend on the base variable inside each atom of the partition. Moreover, $G$ is uniformly contracting along the fibres, with contraction rate $\alpha_3=\max\{\alpha_1,\alpha_2\}.$
\end{example}

\begin{example}{(Fat solenoidal attractors)}\label{tsujii}
	Consider the class of dynamical systems defined by $$F:S^1 \times \mathbb{R} \longrightarrow S^1 \times \mathbb{R} \ \ F(x,y)=(lx, \alpha y + o(x)),$$where $l \geq 2$ is an integer, $0< \alpha < 1$ is a real number, and $o$ is a $C^2$ function on the unit circle $S^1$. In \cite{Tsujii}, M. Tsujii proved the existence of an ergodic probability measure $\mu$ on $S^1 \times \mathbb{R}$ such that Lebesgue almost every point is generic. That is, $$\lim _{n \rightarrow \infty } \dfrac{1}{n} \sum _{i=0}^{n-1} \delta _{F^i(x)} = \mu \ \ \textnormal{weakly.}$$This measure $\mu$ is called the SRB measure for $F$. In the same work, Tsujii characterizes the regularity of $\mu$ with respect to Lebesgue measure depending on the parameters that define $F$.

Fix an integer $l\geq 2$. Define $\mathcal{D} \subset (0,1) \times C^2(S^1, \mathbb{R})$ as the set of pairs $(\alpha, o)$ for which the SRB measure $\mu$ is absolutely continuous with respect to the Lebesgue measure on $S^1 \times \mathbb{R}$. Let $\mathcal{D}^o \subset \mathcal{D}$ denote the interior of $\mathcal{D}$taken with respect to the product topology given by the standard topology on $(0,1)$ and the $C^2$-topology on $C^2(S^1,\mathbb{R})$. The following result is Theorem 1 on page 1012 of \cite{Tsujii}.

\begin{theorem}
	Let $l^{-1}< \lambda < 1$. There exists a finite collection of $C^\infty$ functions $u_i: S^1 \longrightarrow \mathbb{R}$, $i=1, \cdots m$, such that for any $C^2$ function $g \in C^2(S^1, \mathbb{R})$, the subset of $\mathbb{R}^m$ $$\left \{ (t_1, t_ 2, \cdots, t_m) \in \mathbb{R}^m | \left(\alpha, g(x) + \sum _{i=1}^{m} t_ iu_i (x)\right) \notin \mathcal {D}^o\right\}$$is a null set with respect to the Lebesgue measure on $\mathbb{R}^m$.
\end{theorem}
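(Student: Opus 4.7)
My plan is to follow a transversality/parameter-exclusion strategy standard in absolute continuity results for self-affine or skew-product attractors (in the spirit of Solomyak, Peres--Schlag, and Tsujii's own parameter-exclusion technique). The first step is to produce a usable analytic criterion for membership in $\mathcal{D}^o$. Writing the skew product backwards along an inverse branch $\omega = (\omega_1,\omega_2,\dots) \in \{0,1,\dots,l-1\}^{\mathbb{N}}$, each point of the attractor has ``$y$-coordinate'' of the form
\begin{equation*}
\Pi_\omega(x) \;=\; \sum_{n\geq 1} \alpha^{n-1}\, o\!\bigl(l^{-n}(x+\omega_1+l\omega_2+\cdots)\bigr).
\end{equation*}
The SRB measure $\mu$ is the pushforward of Lebesgue on $S^1$ times the Bernoulli-weighted sum above; $\mu$ is absolutely continuous precisely when the family of graphs $\{x\mapsto\Pi_\omega(x)\}$ overlaps transversally in a quantitative sense. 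More concretely, one asks for a \emph{transversality condition}: there exist $C,\tau>0$ so that for all distinct codings $\omega,\omega'$ with first disagreement at level $n$,
\begin{equation*}
\mathrm{Leb}\{x: |\Pi_\omega(x)-\Pi_{\omega'}(x)|<\varepsilon\}\leq C\,\varepsilon\cdot \alpha^{\tau n}.
\end{equation*}
This condition, once verified, implies by a Marstrand--Kaufman--type projection argument that $\mu \in \mathcal{D}^o$ (this is the content of the open characterization of $\mathcal{D}^o$ in the paper; I would invoke it directly).

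The second step is to choose the perturbation basis $\{u_i\}_{i=1}^m$ so that sliding $g\mapsto g+\sum t_i u_i$ moves $\Pi_\omega$ independently at sufficiently many scales to force transversality generically. A natural choice is a finite collection of trigonometric functions, say $u_i(x)=\cos(2\pi k_i x)$ and $\sin(2\pi k_i x)$ for a carefully selected arithmetic progression of frequencies $k_i$, chosen so that for any pair of distinct codings $(\omega,\omega')$ the map
\begin{equation*}
(t_1,\dots,t_m)\;\longmapsto\;\Pi_\omega(x)-\Pi_{\omega'}(x)
\end{equation*}
has derivative in $t$ of size comparable to the natural scale $\alpha^{n/2}$ at the first disagreement level $n$. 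The finiteness of $m$ is crucial and is what makes this non-trivial: one must show that finitely many directions suffice, using that the family of coding pairs is countable and self-similar under the shift.

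The third step is a Fubini/parameter-exclusion argument: for each pair $(\omega,\omega')$ and each dyadic scale $\varepsilon = 2^{-k}$, the set of parameters $t\in\mathbb{R}^m$ for which the above Lebesgue-measure estimate fails is contained in a tube of Lebesgue measure $\lesssim \varepsilon^{1+\delta}$ (using the non-degenerate $t$-derivative from Step 2). Summing over dyadic $\varepsilon$ and over first-disagreement level $n$, and using Borel--Cantelli together with the self-similar structure of codings, I would show the set of $t$ for which transversality fails has Lebesgue measure zero. Hence for a.e.\ $t\in\mathbb{R}^m$ the transversality condition holds and $(\alpha,g+\sum t_i u_i)\in\mathcal{D}^o$.

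\textbf{Main obstacle.} The delicate point is Step 2: ensuring that a \emph{finite}, $g$-independent collection $\{u_i\}$ yields non-degenerate $t$-derivatives uniformly across all coding pairs and all scales $\alpha^n$, and that the resulting transversality estimate is strong enough (with the gain $\varepsilon^{1+\delta}$) to sum in Step 3. This requires exploiting the hypothesis $l^{-1}<\lambda<1$, because it is precisely in this regime that the relevant geometric series $\sum \alpha^n l^n$ diverges, supplying the quantitative room needed to absorb the combinatorial growth of coding pairs.
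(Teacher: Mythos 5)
The statement you were asked to prove is not proved in this paper. Immediately after it the authors write that this ``is Theorem 1 on page 1012 of \cite{Tsujii}'', i.e.\ it is quoted verbatim from Tsujii's \emph{Fat solenoidal attractors} and used as a black box in Example \ref{tsujii}. (The $\lambda$ in the hypothesis and the $\alpha$ in the conclusion are the same parameter---Tsujii's $\lambda$ becomes the paper's $\alpha$---and the notational mismatch is simply carried over from the citation.) There is therefore no proof of record in this paper to compare your proposal against; one can only assess it against Tsujii's original argument.

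With that caveat: the proposal sits in the right circle of ideas---a quantitative transversality criterion for absolute continuity of the SRB measure, followed by a Fubini/parameter-exclusion (Peres--Schlag style) argument in $(t_1,\dots,t_m)$---but it does not constitute a proof, and the gap is precisely the one you flag as the ``main obstacle''. Two concrete points. First, your posited bound $\mathrm{Leb}\{x:|\Pi_\omega-\Pi_{\omega'}|<\varepsilon\}\leq C\varepsilon\,\alpha^{\tau n}$ must be shown to beat the growth in the number of coding pairs first disagreeing at level $n$ (of order $l^{2n}$), and summing the resulting parameter-exclusion tubes over dyadic $\varepsilon$ and over $n$ requires tracking exponents that you never write down; the hypothesis $l^{-1}<\alpha<1$ is used exactly there, and asserting it ``supplies quantitative room'' is not a computation. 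Second, the heart of Tsujii's Theorem~1 is exactly the assertion that a \emph{finite}, $g$-independent family $u_1,\dots,u_m$ yields a uniformly nondegenerate $t$-derivative for all pairs of codings at all scales; proposing trigonometric functions at cleverly chosen frequencies and declaring this ``non-trivial'' re-states the theorem rather than proving it. In Tsujii's argument the finiteness of $m$ comes from a linear-algebraic nondegeneracy condition on finitely many jets of the pair differences, reduced to finitely many configurations by self-similarity and compactness, not from a frequency-selection heuristic. As written, Step~2 is assumed rather than established, so the sketch does not close.
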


We now apply our results to show not only that we can construct equilibrium states possessing all the statistical and analytical properties described in the applications section, but also that these measures coincide with the SRB measure studied by Tsujii in \cite{Tsujii}.

Let us consider the setting: $M=S^1$ (unit circle), $K=[-2,2]$, $f=lx$, $G(x,y)=\alpha y + o(x)$, $\varphi = -\log |f'|$ and $\Phi = \varphi \circ \pi _1$. Since $f'$ is constant, it follows that $\Phi \in \mathscr{P}_\Sigma$ and $\varphi \in \mathscr{P}_M$, according to definitions \ref{P} and \ref{PH}. Therefore, there exists a unique equilibrium state $\mu_0 \in \mathbf{S}^1 \cap \mathbf{S}^\infty$ for the pair $(F, \Phi)$. Moreover, all Theorems A through J apply to $(F,\mu_0)$. Additionally, since $\varphi = -\log |f'|$, the associated conformal measure $\nu$ coincides with Lebesgue measure $m$ on $S^1$. This implies $\pi_{1*}\mu_0 \ll m$, and by Theorem A of \cite{VAC}, $\pi_{1*}\mu_0$ is ergodic.

We claim that $\mu_0$ coincides with the physical measures, $\mu$, constructed in \cite{Tsujii}. Indeed, since $\mu$ is ergodic and absolutely continuous with respect to the Lebesgue measure on $S^1 \times K$, its projection $\pi_{1*}\mu$ is also ergodic and absolutely continuous with respect to $m$ on $\mathbf{S}^1$. By uniqueness, we must have $\pi_{1*}\mu = \pi_{1*}\mu_0$, implying that both $\mu$ and $\mu_0$ project onto the same measure. By Proposition \ref{kjdhkskjfkjskdjf}, this implies $\mu = \mu_0$. The same argument applies even when $\mu$ is singular with respect to Lebesgue measure, provided that $\mu$ is physical.

\end{example}

The next example was introduced in \cite{DHRS}, where it was shown that the non-wandering set of $F$ is partially hyperbolic for certain fixed parameters satisfying the conditions stated below. This class of maps was subsequently studied in \cite{LOR}, \cite{RA2}, \cite{RA1}, and \cite{RA3}. In \cite{RA2}, the inverse map $F^{-1}$ was considered and shown to admit a skew product structure, where the base dynamics is strongly topologically mixing and non-uniformly expanding, while the fibre dynamics is uniformly contracting. In \cite{RA3}, the author established the existence of equilibrium states and proved stability results for this class of systems.

\begin{example}{(Partially hyperbolic horseshoes)} \label{ferradura} Consider the cube $R= [0,1]\times[0,1]\times[0,1]\subset\mathbb{R}^3$  and   the parallelepipeds
		$$R_0 =[0,1]\times [0,1]\times [0,1/6]\quad \mbox{and} \quad R _1=[0,1]\times [0,1]\times [5/6,1].$$ 
		Consider a map   defined for $(x,y,z)\in R_0$ as  
		$$ F_{0}(x,y,z) =(\rho x , f(y),\beta z),$$
		where $0 < \rho <{1/3}$, $\beta> 6$ and  $$f(y) =\frac {1}{1 - \left(1-\frac{1}{y}\right)e^{-1}}.$$
		Consider also a map  defined for $(x,y,z)\in R_1$ as
		$$F_{1}(x,y,z)  = \left(\frac{3}{4}- \rho x , \sigma (1 - y) ,\beta_{1} \left(z - \frac{5}{6} \right)\right),$$
		where  $0<\sigma< {1/3}$ and $3< \beta_1 < 4$.
		Define the horseshoe map $F$ on $R$ as
		$$F\vert_{R_0}=F_0,\quad F\vert_{R_1}=F_1,
		$$
		with $R\setminus(R_0\cup R_1)$ being mapped injectively outside $R$. In \cite{DHRS} it was proved that the non-wandering set of $F$ is partially hyperbolic when we consider fixed parameters satisfying conditions above.  
	\end{example}

\section{Statements and Declarations}

\subsection{Competing Interests/Conflict of interest}

This work was partially supported by CNPq (Brazil) Grants 409198/2021-8, CNPq (Brazil) Grants 446515/2024-8, CNPq, (Brazil) Grants 420353/2025-9 and Alagoas Research Foundation-FAPEAL (Brazil) Grants E:60030.0000000161/2022.

\subsection{Data availability statement}

On behalf of all authors, the corresponding author states that data sharing is not applicable to this article as no datasets were generated or analysed during the current study.

\end{document}